            \newtheorem{thm}{Theorem}[section]
          \newtheorem{proposition}[thm]{Proposition}
          \newtheorem{lemma}[thm]{Lemma}
          \newtheorem{definition}[thm]{Definition}
          \newtheorem{rem}[thm]{Remark}
          \def\R{{\mathbb R}}
          \def\d{{\partial}}
          \DeclareMathOperator{\diver}{div}
          \DeclareMathOperator{\RE}{Re}
          \DeclareMathOperator{\IM}{Im}
          \def\eps{\varepsilon}
          \newcommand{\wstar}{\overset{\ast}{\rightharpoonup}}
          \newcommand{\p}{u-u'}
          \newcommand{\Ae}{\tilde{A}^\eps}
          \newcommand{\Je}{\mathcal{J}^\eps}
          \newcommand{\g}{2(\gamma-1)}
          \subjclass[2000]{35Q40, 35Q35 (Primary); 76Y05, 82D10.}
\keywords{Nonlinear Maxwell-Schr\"odinger, Quantum Magnetohydrodynamics, finite energy solutions.}
\begin{document}
          \title[Nonlinear M-S and QMHD]{Nonlinear Maxwell-Schr\"odinger system and Quantum Magneto-Hydrodynamics in 3D }

\author{Paolo Antonelli}
\address{Gran Sasso Science Institute, L'Aquila}
\email{paolo.antonelli@gssi.it}

\author{Michele D'Amico}
\address{Gran Sasso Science Institute, L'Aquila}
\email{michele.damico@gssi.it}

\author{Pierangelo Marcati}
\address{Gran Sasso Science Institute, L'Aquila \and Universit\`a dell'Aquila}
\email{marcati@univaq.it}

         \maketitle

          \begin{abstract}
          Motivated by some models arising in quantum plasma dynamics, in this paper we study the Maxwell-Schr\"odinger system with a power-type nonlinearity. We show the local well-posedness in 
$H^2(\R^3)\times H^{3/2}(\R^3)$ and the global existence of finite energy weak solutions, these results are then applied to the analysis of finite energy weak solutions for Quantum Magnetohydrodynamic systems.
          \end{abstract}

\date{\today}

         \section{Introduction}\label{sect:intro}
In this paper we investigate the existence of local and global in time solutions to the following 3--D nonlinear Maxwell-Schr\"odinger system
\begin{equation}\label{eq:msc}
\left\{\begin{array}{l} i\partial_{t} u=-\frac12\Delta_A u+\phi u+\vert u\vert^{2(\gamma-1)}u\\
\Box A=\mathbb PJ(u,A)\\
\end{array}\right. 
\end{equation}
with the initial data
\begin{equation*}
u(0)=u_0,\; A(0)=A_0, \;\d_tA(0)=A_1.
\end{equation*}
Here all the physical constants are normalized to 1, $\Delta_A=(\nabla-iA)^2$ denotes the magnetic Laplacian, 
$\phi=\phi(\rho)=(-\Delta)^{-1}\rho$, with $\rho:=|u|^2$, represents the Hartree-type electrostatic potential, while the power nonlinearity describes the self-consistent interaction potential. 
$J(u,A)=\IM(\bar{u}(\nabla -iA)u)$ is the electric current density and $\mathbb P=\mathbb I-\nabla\diver\Delta^{-1}$ denotes the Leray-Helmholtz projection operator onto divergence free vector fields.
\newline
The Maxwell-Schr\"odinger system
\begin{equation}\label{eq:ms}
\left\{\begin{aligned}
&i\d_tu=-\frac12\Delta_Au+\phi u\\
&-\Delta\phi-\d_t\diver A=\rho\\
&\Box A+\nabla(\d_t\phi+\diver A)=J,\\
\end{aligned}\right.
\end{equation}
is used in the literature to describe the dynamics of a charged non-relativistic quantum particle, subject to its self-generated (classical) electro-magnetic field, see for instance \cite{Sc,Fe}. In particular  the Maxwell-Schr\"odinger system \eqref{eq:ms} can be seen as a classical approximation to the quantum field equations for an electro-dynamical non-relativistic many body system. It is well known to be invariant under the gauge transformation
\begin{equation}\label{eq:gauge}
(u, A, \phi)\mapsto (u', A', \phi')=(e^{i\lambda}u, A+\nabla\lambda, \phi-\d_t\lambda),
\end{equation}
therefore for our convenience we can decide to work in the Coulomb gauge, namely by assuming $\diver A=0$. Consequently under this gauge  the system (\ref{eq:ms}) takes the form
\begin{equation*}
\left\{\begin{aligned}
&i\d_t u=-\frac12\Delta_A u+\phi u\\
&\Box A=\mathbb PJ(u,A).
\end{aligned}\right.
\end{equation*}
It is straightforward to verify that also power-type nonlinearities of the previous form are gauge invariant. 
\\ The Maxwell-Schr\"odinger system \eqref{eq:ms} has been widely studied in the mathematical literature in the various choice of gauges, 
for instance among the first mathematical treatments we mention \cite{NT, TN}, where the authors studied the local and global well-posedness in high regularity spaces by means of the Lorentz gauge. The global existence of finite energy weak solutions has been investigated in \cite{GNS}, by using the method of vanishing viscosity. However the uniqueness and the global well-posedness of the finite energy weak solutions is not easily achievable  with this approach. In \cite{NW, NW1}, by using the semigroup associated to the magnetic Laplacian following Kato's theory \cite{Kato1, Kato2} and hence by means of a fixed point argument, the authors obtained global well-posedness with higher order Sobolev regularity.
   
More recently a global well-posedness result in the energy space has been proven  in \cite{BT} by using the analysis of a short time wave packet parametrix for the magnetic Schr\"odinger equation and the related linear, bilinear, and trilinear estimates. Therefore strong $H^1$ solutions to \eqref{eq:ms} are obtained as the unique strong limit of $H^2$ solutions. Moreover  in the same paper the authors obtained a continuous dependence on initial data in the energy space.  
The asymptotic behavior and the long-range scattering of solutions to \eqref{eq:ms} has been studied for instance in \cite{GV1, GV2, Shi} (see also the references therein). 
The global well-posedness in the space of energy for the 2D Maxwell-Schr\"odinger system in Lorentz gauge has been investigated by \cite{Wada} .
\newline
In the present paper we focus on the Cauchy problem for the Maxwell-Schr\"odinger system with a power-type nonlinearity; our interest in this problem is motivated by the possibility to develop a general theory for quantum fluids in presence of self-induced electromagnetic interacting fields. 
The related Quantum Magneto-Hydrodynamic (QMHD) systems, with a nontrivial pressure tensor,  arise in the description of quantum plasmas, for example in astrophysics, where magnetic fields and quantum effects are non negligible, see \cite{Haas, HaasB, SE, ShuEl} and the references therein. The hydrodynamic equations describing a bipolar gas of ions and electrons can be recovered from the Maxwell-Schr\"odinger system \eqref{eq:msc} by applying the Madelung transforms as done  in \cite{AM1}, where the authors studied a general class of quantum fluids in the non-magnetic case. 
We refer to the Section \ref{sect:QMHD} for a more detailed discussion concerning the connection between QMHD and the Maxwell-Schr\"odinger system \eqref{eq:msc}.
\newline
We state in the sequel the two main results of this paper.
The first one regards the local well-posedness theory for \eqref{eq:msc} in $H^2(\R^3)\times H^{\frac32}(\R^3)$. More precisely let us denote  by
\begin{equation*}
X:=\left\{(u_0, A_0, A_1)\in H^2(\R^3)\times H^{3/2}(\R^3)\times H^{1/2}(\R^3)\;\textrm{s.t.}\;\diver A_0=\diver A_1=0\right\}.
\end{equation*}.
\begin{thm}[Local wellposedness]\label{th:1}
Let $\gamma>\frac{3}{2}$. For all $(u_0,A_0,A_1)\in X$ there exists a (maximal) time $0<T_{max}\leq\infty$ and a unique (maximal) solution $(u,A)$ to \eqref{eq:msc} such that 
\begin{itemize}
\item $u\in C([0,T_{max});H^2(\mathbb{R}^3))$,  
\item $A\in C([0,T_{max});H^{\frac{3}{2}}(\mathbb{R}^3)\cap C^1([0,T_{max});H^{\frac{1}{2}}(\mathbb{R}^3))$, $\diver A=0$
\item Let $\Gamma(u_0,A_0,A_1)=(u(\cdot),A(\cdot),\partial_{t} A(\cdot)),$ then \\ $\Gamma \in C(X;C([0,T];X))$, for any $0<T<T_{max}$
\end{itemize}
The following blowup alternative holds: either $T_{max}=\infty$ or $T_{max}<\infty$ and
\[\lim_{t\uparrow T_{max}}\Vert u(t)\Vert_{H^2(\mathbb{R}^3)}=\infty,\,\,\,\lim_{t\uparrow T_{max}}\Vert A(t)\Vert_{H^{\frac{3}{2}}(\mathbb{R}^3)}=\infty,\,\,\,\lim_{t\uparrow T_{max}}\Vert\partial_{t} A(t)\Vert_{H^{\frac{1}{2}}(\mathbb{R}^3)} =\infty \,.\]  
\end{thm}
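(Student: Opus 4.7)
The plan is to recast the Cauchy problem as a system of coupled integral equations via Duhamel, and then run a fixed point/contraction argument in a suitable complete metric space. Writing the Schr\"odinger equation in the form
\[
u(t)=e^{it\Delta/2}u_0-i\int_0^t e^{i(t-s)\Delta/2}F(u,A)(s)\,ds,
\qquad F(u,A):=iA\cdot\nabla u+\tfrac12|A|^2 u+\phi u+|u|^{\g}u,
\]
(using $\diver A=0$ to rewrite $\Delta_A$) and the wave equation in the form
\[
A(t)=\cos(t|\nabla|)A_0+\tfrac{\sin(t|\nabla|)}{|\nabla|}A_1+\int_0^t\tfrac{\sin((t-s)|\nabla|)}{|\nabla|}\mathbb P J(u,A)(s)\,ds,
\]
I would look for $(u,A)$ in a ball of the space
$$
Y_T:=C([0,T];H^2)\cap L^q([0,T];W^{2,r})\;\times\;\bigl(C([0,T];H^{3/2})\cap C^1([0,T];H^{1/2})\bigr),
$$
where $(q,r)$ is an admissible Strichartz pair (e.g.\ $(q,r)=(2,6)$), with the divergence-free constraint on $A$ preserved automatically by $\mathbb P$.

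\textbf{Key estimates.} On the Schr\"odinger side, Strichartz inequalities for $e^{it\Delta/2}$ give the full $Y_T$-norm of $u$ from $\|u_0\|_{H^2}$ plus $\|F(u,A)\|_{L^{q'}_tW^{2,r'}_x}$, and the factor $T^{\theta}$ for some $\theta>0$ arising from H\"older in time will drive the contraction. On the wave side, standard energy estimates yield $\|A\|_{C_tH^{3/2}}+\|\partial_tA\|_{C_tH^{1/2}}$ controlled by the data plus $\int_0^T\|\mathbb P J(u,A)\|_{H^{1/2}}\,ds$. I would then estimate the four pieces of $F$ and the current $J$ by Kato-Ponce/fractional Leibniz and Sobolev embedding in $\R^3$: the magnetic terms $A\cdot\nabla u$ and $|A|^2 u$ by combining $H^{3/2}\hookrightarrow L^p$ for every $p<\infty$ with $\nabla u\in H^1\hookrightarrow L^6$; the Hartree term $\phi u$ using that $\phi=(-\Delta)^{-1}|u|^2$ gains two derivatives; and the current $\IM(\bar u(\nabla-iA)u)$ similarly in $H^{1/2}$. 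For $|u|^{\g}u$ I would use a Moser-type chain rule estimate in $H^2$, which is where the hypothesis $\gamma>3/2$ enters, because one needs $2(\gamma-1)\geq 1$ (with strict inequality to obtain the required H\"older regularity of the second derivative of the nonlinearity) so that the composition preserves $H^2$ and is locally Lipschitz on bounded sets.

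\textbf{Main obstacle.} The delicate point is the mixed term $A\cdot\nabla u$: since $H^{3/2}(\R^3)$ barely fails to embed in $L^\infty$, one cannot simply put $A$ in $L^\infty_x$. I plan to circumvent this by distributing derivatives via fractional product estimates, placing $A$ in $L^\infty_t H^{3/2}_x\hookrightarrow L^\infty_t L^p_x$ for large finite $p$, and $\nabla u$ in the Strichartz space $L^q_t W^{1,r}_x$, picking $(q,r)$ so that the pairing closes with a positive power of $T$. A closely related issue appears when estimating $\mathbb P J$ in $H^{1/2}$, which I would handle by exploiting the gain of half a derivative relative to the natural $L^2$-pairing $\bar u\nabla u$ through duality and fractional Leibniz.

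\textbf{Uniqueness, continuous dependence, and blow-up.} Uniqueness and the Lipschitz property of $\Gamma$ follow from the same difference estimates run on $(u-u',A-A')$, now at one order of regularity less so as to avoid paying for the top derivative of the nonlinearity. The maximal solution is constructed by iterating the local existence, and the blow-up alternative is obtained by the usual contradiction: if the three norms stay bounded on $[0,T_{\max})$, the local existence time in the fixed point scheme can be chosen uniformly, allowing to extend the solution past $T_{\max}$. Finally, continuity of $\Gamma$ on $X$ with values in $C([0,T];X)$ follows by combining weak convergence of the data with the uniform Lipschitz bound on bounded subsets of $X$, together with persistence of regularity of the fixed point.
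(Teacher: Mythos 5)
There is a genuine gap at the heart of your existence scheme: the term $A\cdot\nabla u$ cannot be treated perturbatively with the free propagator $e^{it\Delta/2}$ at the regularity $u\in H^2$, $A\in H^{3/2}$. To close the fixed point in $C_tH^2_x$ (even with auxiliary Strichartz norms $L^q_tW^{2,r}_x$) you must bound $\Lambda^2(A\cdot\nabla u)$ in a dual Strichartz space, and every admissible distribution of the two derivatives by Kato--Ponce/paraproducts places either at least two derivatives on $A$ or at least three on $u$: the piece where $A$ carries the high frequencies requires $\Lambda^2 A$ in some $L^p$, which is \emph{not} controlled by $\|A\|_{H^{3/2}}$ (Sobolev embedding on $\R^3$ only increases integrability, it cannot trade half a derivative of integrability back into a full derivative), while the other piece requires $u\in H^{3,q}$. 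Standard Strichartz estimates gain no derivatives, so your proposed fix --- $A\in L^p_x$ for large $p$ against $\nabla u\in L^q_tW^{1,r}_x$ --- only covers the low-derivative distribution and leaves the high-frequency-on-$A$ interaction uncontrolled. This half-derivative loss is precisely the known obstruction for Maxwell--Schr\"odinger, and it is why the paper does \emph{not} iterate with the free propagator: it constructs the magnetic evolution operator $U_A(t,s)$ for $-\frac12\Delta_A$ \`a la Kato (Proposition \ref{prop:prop}), whose $H^2\to H^2$ bound needs only $A\in C_tH^1_x$ and $\d_tA\in L^1_tL^3_x$, so that the Duhamel term contains only $\phi u+|u|^{2(\gamma-1)}u$; the contraction is then run in the weaker metric $\max\{\|u_1-u_2\|_{L^\infty_tL^2_x},\|A_1-A_2\|_{L^4_{t,x}}\}$ on a ball of $X_T$.

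A second, related gap concerns continuous dependence. Because the contraction holds only in the weaker metric, the map $\Gamma$ is Lipschitz from $L^2\times H^{1/2}\times H^{-1/2}$ into the corresponding weak norms, but there is no ``uniform Lipschitz bound on bounded subsets of $X$'' in the $X$-topology, and weak convergence of the data plus persistence of regularity yields only weak continuity in $C([0,T];X)$, not the strong continuity claimed in the theorem. The paper has to work for this: it first proves a quantitative stability estimate for more regular data in $H^4\times H^{5/2}\times H^{3/2}$ (Lemmas \ref{lemma:1}--\ref{lemma:3}, controlling $\|\d_tu'\|_{L^\infty_tH^2_x}$ by the higher norms of the data), and then runs a Bona--Smith-type mollification/density argument, balancing the $O(\delta^2)$ approximation of the data against the $O(\delta^{-2})$ growth of the higher norms. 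Your uniqueness argument and the blow-up alternative are fine, but the existence core and the continuity of $\Gamma$ both need to be reworked along these lines.
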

Our proof  plays on the  the construction of the evolution operator associated to the magnetic Laplacian, based on Kato's  approach \cite{Kato1, Kato2}, then we perform a fixed point argument to approximate the solutions to the Maxwell-Schr\"odinger system by the classical Picard iteration. Differently from \cite{NW1}, in our case the solutions obtained by this method cannot be extended globally in time, indeed the power-type nonlinearity does not lead to a Gronwall type inequality capable to bound the higher order norms of the solution at any time, see also \cite{Pet} for a similar problem. 
\newline
To circumvent this difficulty  we  regularize the system \eqref{eq:msc} by making use of the so-called Yosida approximations of the identity, hence we are able to get the global well-posedness for the approximating system in $H^2(\R^3)\times H^{3/2}(\R^3)$. Moreover, by using the uniform bounds  provided by the higher order energy, defined by the norm of  $X$, we prove the existence of a finite energy weak solution to \eqref{eq:msc}, in the sense defined in \cite{GNS}. This is established by the following theorem.
\begin{thm}[Global Weak Solutions]\label{th:3}
Let $1<\gamma<3$, $(u_0,A_0,A_1)\in X$, then there exists, globally in time, a finite energy weak solution $(u,A)$ to \eqref{eq:msc}, such that $u\in L^\infty(\R_+;H^1(\R^3))$, $A\in L^\infty(\R_+;H^1(\R^3))\cap W^{1, \infty}(\R_+;L^2(\R^3))$.
\end{thm}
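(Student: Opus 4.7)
Following the scheme anticipated in the discussion preceding the statement, I would construct $(u,A)$ as the weak limit of global smooth solutions of a regularized system, whose energy furnishes all the bounds dictated by the finite-energy weak-solution notion of \cite{GNS}. The first step is to regularize \eqref{eq:msc} by means of the Yosida-type operators $\Ae,\Je$: these smooth the singular quadratic and cubic interactions, in both the magnetic Laplacian and the current, so that the right-hand side becomes Lipschitz on $X$, while still preserving mass, energy and the Coulomb gauge condition $\diver A^\eps=0$. Then, for every $\eps>0$, the local solutions supplied by Theorem \ref{th:1} extend globally in time, because the regularization turns the higher-order estimate into a Gr\"onwall-type inequality on the $X$-norm, producing a family $(u^\eps,A^\eps,\partial_t A^\eps)\in C(\R_+;X)$.

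Next, multiplying the regularized Schr\"odinger equation by $\partial_t\bar u^\eps$, and the regularized wave equation by $\partial_t A^\eps$, yields conservation of the energy
\begin{equation*}
E^\eps(t)=\tfrac12\|(\nabla-iA^\eps)u^\eps\|_{L^2}^2+\tfrac12\|\nabla\phi^\eps\|_{L^2}^2+\tfrac1\gamma\|u^\eps\|_{L^{2\gamma}}^{2\gamma}+\tfrac12\|\partial_tA^\eps\|_{L^2}^2+\tfrac12\|\nabla A^\eps\|_{L^2}^2,
\end{equation*}
together with the mass $\|u^\eps(t)\|_{L^2}=\|u_0\|_{L^2}$. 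Since $E^\eps(0)\to E(0)<\infty$, this provides uniform-in-$\eps$ bounds $\|u^\eps\|_{L^\infty_tH^1_x}+\|A^\eps\|_{L^\infty_tH^1_x}+\|\partial_tA^\eps\|_{L^\infty_tL^2_x}\leq C$, and Banach--Alaoglu supplies weak-$*$ limits $(u,A,\partial_tA)$ in the same spaces.

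The third step is to upgrade weak to strong convergence via an Aubin--Lions argument. The regularized Schr\"odinger equation bounds $\partial_t u^\eps$ in $L^\infty_tH^{-s}_x$ for some $s>0$ --- the nonlinearity $|u^\eps|^{\g}u^\eps$ and the quadratic magnetic term $|A^\eps|^2u^\eps$ both sit in a subcritical $L^p_x$ precisely because $\gamma<3$ forces $2\gamma<6$ --- while the wave equation bounds $\partial_{tt}A^\eps$ in $L^\infty_tH^{-1}_x$. Interpolating with the uniform $H^1$ bounds yields, up to a subsequence, $u^\eps\to u$ in $L^q_{loc}(\R_+\times\R^3)$ for every $2\le q<6$ and $A^\eps\to A$ in $C_{loc}(\R_+;L^q_{loc})$ for the same range. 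These suffice to pass to the limit in the weak formulation: the Hartree term is handled by continuity of $(-\Delta)^{-1}$; the power nonlinearity converges strongly in $L^1_{loc,t}L^{2\gamma/(2\gamma-1)}_x$ thanks to $\gamma<3$; and the current $J(u^\eps,A^\eps)=\IM(\bar u^\eps\nabla u^\eps)-|u^\eps|^2A^\eps$ is identified as a (strong)$\times$(weak) product in $L^q\times L^2$ for the first summand, and as a product of two strongly convergent factors for the second.

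The main obstacle is the simultaneous strong compactness of $u^\eps$ and $A^\eps$ required to identify the quadratic and cubic couplings $A^\eps u^\eps$, $|u^\eps|^2A^\eps$ and the magnetic kinetic density $|(\nabla-iA^\eps)u^\eps|^2$, whose weak lower semicontinuity is needed to transfer the energy bound to the limit. The restriction $\gamma<3$ is precisely the one imposed by the energy-subcritical Sobolev embedding $H^1(\R^3)\hookrightarrow L^6(\R^3)$: below this threshold, both the power nonlinearity and $\partial_t u^\eps$ lie in function spaces compatible with the Aubin--Lions machinery.
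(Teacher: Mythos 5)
Your proposal is correct and follows essentially the same route as the paper: Yosida regularization of the nonlinearities and of the current, global extension of the regularized solutions via a Gr\"onwall-type bound on the higher-order norm made possible by the smoothing (in the paper, the key point is that $\|\Je\mathbb P J^\eps\|_{H^{1/2}}\lesssim\|J^\eps\|_{L^{3/2}}$ is controlled by the conserved energy alone, which then closes the $H^2$ estimate for $u^\eps$ linearly), uniform energy bounds, Aubin--Lions compactness for $u^\eps$ and $A^\eps$, and passage to the limit in the weak formulation using $1<\gamma<3$. The only minor imprecision is your remark that the regularized right-hand side becomes ``Lipschitz on $X$'' --- it is only locally Lipschitz, and globality comes instead from the energy-controlled linear growth you correctly describe afterwards.
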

\begin{rem}
The same results can be obtained in a straightforward way, by using the previous results on the Coulomb gauge, in any other admissible gauge. 
\end{rem}
\begin{rem}
It is possible to include a Hartree (nonlocal) nonlinear potential of the form  $(\vert\cdot\vert^{-\alpha}\ast\vert u\vert^2)u$, with $0<\alpha<3$. It can be dealt in the same fashion as for power nonlinearities.  
\end{rem}

The final goal of this paper is to develop a suitable theory for the QMHD system \eqref{eq:QMHD}. The major obstacle in this direction, which is also the major difference with respect to the usual usual QHD theory, regards the possibility to give sense to the nonlinear term related to the Lorenz force. For sake of simplicity we will consider the case without the nonlinear potential. Let us recall the definition of the macroscopic hydrodynamic variables via the so-called Madelung transformations, namely 
$${\rho}:=|u|^2 \qquad J:=\RE(\bar{u} (-i\nabla-A)u)$$  \\
From the Maxwell equations we have 
$$E=-\d_tA-\nabla\phi \qquad B=\nabla\wedge A\qquad  F_L:= \rho E+J\wedge B,$$
where $E, B, F_L, \phi$ denote the Electric field, the Magnetic field, the Lorenz force and the (scalar) electrostatic potential,  respectively.  The fields equations are supplemented by the involution of the magnetic field  and in the Coulomb gauge by the Poisson equation  (here all the physical constants are normalized to one), namely 
$$ \diver B=0, \qquad  \diver E=-\Delta \phi =\rho$$
The usual energy estimates on the  Maxwell-Schr\"odinger system  \eqref{eq:msc}, as we will see in the Section \ref{sect:QMHD},  lead  to $\frac{J}{\sqrt \rho }\in L^{\infty}_tL^{2}_x$,  $\nabla {\sqrt \rho }\in L^{\infty}_tL^{2}_x$,  $J\in L^{\infty}_tL^{ 3/2}_x$, $B \in L^{\infty}_tL^2_x$, $\nabla\wedge J \in L^{\infty}_tL^{1}_x\cap L^{\infty}_tW^{-1, 3/2}_x$. Unfortunately these bounds are not sufficient to apply the compensated compactness of Tartar \cite{Mu1, Mu2, Tar} and in particular the argument  in the Lecture 40 of \cite{Tar1}, indeed  $J\notin L^2_x$ and $B\notin L^3_x$ (the boundedness in at least one of these norms would be sufficient).
Therefore the analysis of the Lorenz force for finite energy solutions needs  still to be better understood.  In \cite{AIM},the authors investigate the weak stability of the Lorentz force by a detailed frequency analysis, in the case of incompressible dynamics (where $J\in L^2).$
In \cite{BT} the authors obtain a global well-posedness result, in the sense that finite energy strong solutions are the unique limit of $H^{2}$ regular solutions, but however these solutions do not allow to treat the Lorenz force term.
The  results of \cite{NW, NW1}, obtained without the nonlinear potential, include global well-posedness in higher order Sobolev spaces which combined with the methods of \cite{AM1,AM2} allows instead to analyze the pressureless QMHD case. \\
The additional difficulty introduced by the power nonlinearity in the  Maxwell-Schr\"odinger system  \eqref{eq:msc} in 3--D, namely a nonlinear pressure term in the QMHD system, can't be easily managed. Usually the proof of higher order well-posedness for the NLS, combines higher order energy estimates with the use of sharp Strichartz estimates. However Strichartz estimates of the same type are not, to our knowledge, available for the system \eqref{eq:msc}, while a brute force higher order energy estimate would end up in a superlinear Gronwall inequality and hence into an upper bound which blows up in finite time. \\
Our theory deals with the presence of a hydrodynamic pressure and it will provide a local well-posedness of QMHD in the higher regularity framework.

The paper is organized as follows.
In Section \ref{sect:prel} we collect some estimates which will be used afterwards
and we study the evolution operator associated to the linear magnetic Schr\"odinger equation. In Section \ref{sect:LWP} we prove Theorem \ref{th:1}. In Section \ref{sect:g_sol} we introduce an approximating system to \eqref{eq:msc} for which we show global existence of solutions and then we pass to the limit, proving Theorem \ref{th:3}. Finally, in Section \ref{sect:QMHD} we discuss about the application of our main results to the existence theory for the QMHD system.

\section{Notation and Preliminaries}\label{sect:prel}
In this Section we introduce the notation and we review some preliminary results we are going to use throughout the paper.
\newline
Let $A, B$ be two quantities, we say $A\lesssim B$ if $A\leq CB$ for some constant $C>0$.
We denote by $L^p(\R^3)$ the usual Lebesgue spaces, $H^{s, p}(\R^3)$ are the Sobolev spaces defined throught the norms $\|f\|_{H^{s, p}}:=\|(1-\Delta)^{s/2}f\|_{L^p}$. For a given reflexive Banach space $\mathcal X$ we let $C([0,T];\mathcal{X})$ (resp. $C^1([0,T];\mathcal{X}))$) denote the space of continuous (resp. differentiable) maps $[0, T]\mapsto\mathcal X$. Analogously, $L^p(0, T;\mathcal X)$ is the space of functions whose Bochner integral $\|f\|_{L^p(0, T;\mathcal X)}:=\left(\int_0^T\|f(t)\|_{\mathcal X}\,dt\right)^{1/p}$ is finite. 

\begin{lemma}[Generalized Kato-Ponce inequality]\label{lemma:leibniz}
Suppose $1<p<\infty$, $s\geq 0$, $\alpha\geq 0$, $\beta\geq 0$ and $\frac1p=\frac{1}{p_i}+\frac{1}{q_i}$ with $i=1,2$, $1<q_1\leq\infty$, $1<p_2\leq\infty$. Setting $\Lambda^s=(I-\Delta)^{\frac{s}{2}}$ we have
\begin{align*}
\Vert \Lambda^s(f_1f_2)\Vert_{L^p(\mathbb{R}^3)}&\lesssim\Vert \Lambda^{s+\alpha}(f_1)\Vert_{L^{p_1}(\mathbb{R}^3)}\Vert \Lambda^{-\alpha}(f_2)\Vert_{L^{q_1}(\mathbb{R}^3)}\\&+\Vert \Lambda^{-\beta}(f_1)\Vert_{L^{p_2}(\mathbb{R}^3)}\Vert \Lambda^{s+\beta}(f_2)\Vert_{L^{q_2}(\mathbb{R}^3)}
\end{align*}

\end{lemma}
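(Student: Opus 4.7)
The plan is to prove the inequality via the Littlewood--Paley / paraproduct decomposition of Bony. Fix a smooth inhomogeneous Littlewood--Paley partition of unity $\{\psi_j\}_{j\geq -1}$ on $\R^3$, with associated frequency projectors $\Delta_j$ and low-frequency cut-offs $S_j := \sum_{k\leq j-3}\Delta_k$. Decompose the product as
$$f_1 f_2 = \sum_j S_{j-3}f_2 \cdot \Delta_j f_1 \;+\; \sum_j S_{j-3}f_1 \cdot \Delta_j f_2 \;+\; \sum_{|j-k|\leq 2} \Delta_j f_1 \cdot \Delta_k f_2,$$
which splits the product into a ``high-low'' piece (where $f_1$ carries the higher frequency), a ``low-high'' piece (where $f_2$ does), and a diagonal ``high-high'' remainder.

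For the high-low piece, each summand $S_{j-3}f_2 \cdot \Delta_j f_1$ is frequency-localized in an annulus of scale $2^j$, so $\Lambda^s$ acts essentially as multiplication by $2^{js}$ on it. Using the square-function characterization of $L^p$, H\"older's inequality in the form $\frac{1}{p}=\frac{1}{p_1}+\frac{1}{q_1}$, and the uniform $L^{q_1}$-boundedness of $S_{j-3}$, one bounds this contribution by $\|\Lambda^s f_1\|_{L^{p_1}}\|f_2\|_{L^{q_1}}$. To promote this to $\|\Lambda^{s+\alpha} f_1\|_{L^{p_1}}\|\Lambda^{-\alpha} f_2\|_{L^{q_1}}$, I insert the harmless factor $2^{-j\alpha}\cdot 2^{j\alpha}$ in each block: the factor $2^{j\alpha}$ is absorbed into $\Lambda^{\alpha}\Delta_j f_1$ to produce the $\Lambda^{s+\alpha}$ norm on $f_1$, while on the low-frequency factor one uses $2^{-j\alpha}\lesssim 2^{-k\alpha}$ for $k\leq j-3$ (valid since $\alpha\geq 0$) to recover the $\Lambda^{-\alpha}$ norm on $f_2$. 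The low-high piece yields, by symmetry, the second term on the right-hand side with parameter $\beta$. The diagonal remainder is handled by a further dyadic decomposition in the output frequency, combined with the hypothesis $s\geq 0$ so that the resulting geometric series in the output index converges; it is then absorbed into one of the two main terms.

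The main obstacle is the endpoint cases $q_1=\infty$ or $p_2=\infty$, where the square-function characterization of the target space breaks down. These are dealt with by replacing $L^\infty$ with $\mathrm{BMO}$ at the intermediate step and exploiting the boundedness of Coifman--Meyer paraproducts on $\mathrm{BMO}\times L^p$, or more efficiently by invoking the Coifman--Meyer multiplier theorem directly on the bilinear Fourier multipliers associated to each paraproduct piece, in the spirit of Grafakos--Oh. Since the result is classical and extensively documented in the harmonic-analytic literature, for the purposes of the paper one may simply cite it and use the inequality as a black box.
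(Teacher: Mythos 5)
Your proposal is essentially sound, but it does considerably more work than the paper, which offers no proof at all: it simply cites Theorem 1.4 of Gulisashvili--Kon \cite{KG} and uses the inequality as a black box---precisely the option you name in your final sentence. Your Bony paraproduct sketch is the standard route to this kind of two-parameter fractional Leibniz rule, and the architecture (high-low, low-high and diagonal pieces; square functions plus H\"older with $\frac1p=\frac{1}{p_1}+\frac{1}{q_1}$; the insertion of $2^{-j\alpha}2^{j\alpha}$ to shift $\alpha$ derivatives from $f_2$ onto $f_1$) is the right one. Two points would need tightening in a full write-up. First, the comparison $2^{-j\alpha}\lesssim 2^{-k\alpha}$ cannot be applied termwise to the signed sum $S_{j-3}f_2=\sum_{k\le j-6}\Delta_k f_2$; one should instead write $2^{-j\alpha}S_{j-3}f_2=\sum_{k}2^{-(j-k)\alpha}\,\bigl(2^{-k\alpha}\Delta_k f_2\bigr)$ and sum the geometric factor, which requires $\alpha>0$ (the case $\alpha=0$ reduces to the uniform $L^{q_1}$-boundedness of $S_j$, so only a harmless case split is needed). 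Second, for the diagonal remainder the geometric series in the output frequency converges only for $s>0$; at $s=0$ a separate argument is required (e.g.\ Cauchy--Schwarz in the dyadic index together with the Littlewood--Paley characterization of $L^p$), although every invocation of the lemma in the paper has $s>0$, so that endpoint is moot here. With these caveats, your sketch is a correct outline of a proof of the result that the paper imports purely by citation.
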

\begin{proof}
Those estimates are generalization of Kato-Ponce commutator estimates, for a proof of this Lemma see for example Theorem $1.4$ in \cite{KG}.
\end{proof}
\begin{lemma}
Let $p, q$ be such that $1\leq q<\frac32<p\leq\infty$, then
\begin{equation}\label{eq:sogge_gen}
\|(-\Delta)^{-1}f\|_{L^\infty}\lesssim\|f\|_{L^p}^\theta\|f\|_{L^q}^{1-\theta},
\end{equation}
where $\theta\in(0, 1)$ is given by $\theta=\frac{(q'-3)p'}{3(q'-p')}$.
Furthermore, the following estimates hold
\begin{align}
\Vert(-\Delta)^{-1}(f_1f_2)f_3\Vert_{L^2(\mathbb{R}^3)}&\lesssim \Vert f_1\Vert_{L^2(\mathbb{R}^3)}\Vert f_2\Vert_{L^3(\mathbb{R}^3)}\Vert f_3\Vert_{L^3(\mathbb{R}^3)}\label{eq:inverselaplacianestimate}\\
\Vert(-\Delta)^{-1}\vert f\vert^2\Vert_{L^\infty(\mathbb{R}^3)}&\lesssim\Vert f\Vert_{L^2(\mathbb{R}^3)}^2+\Vert f\Vert_{L^\infty(\mathbb{R}^3)}^2\label{eq:soggelemma}
\end{align}
\end{lemma}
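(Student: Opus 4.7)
The plan is to derive all three estimates from the explicit Riesz kernel representation $(-\Delta)^{-1}f(x)=c_0\int_{\R^3}|x-y|^{-1}f(y)\,dy$ via a single near/far splitting argument, combined with the Hardy--Littlewood--Sobolev and Young inequalities.

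For \eqref{eq:sogge_gen} I would fix a radius $R>0$ and split the kernel integral into $|x-y|<R$ and $|x-y|\geq R$. On the near region H\"older with conjugate exponents $p,p'$ is admissible precisely under the hypothesis $p>3/2$ (i.e.\ $p'<3$), since this is what makes $\int_{|x-y|<R}|x-y|^{-p'}\,dy\lesssim R^{3-p'}$ finite; this produces a bound $CR^{3/p'-1}\|f\|_{L^p}$ (the endpoint $p=\infty$ is handled by pulling the $L^\infty$ norm out of the integral and using $\int_{|x-y|<R}|x-y|^{-1}\,dy\sim R^2$). Symmetrically, on the far region H\"older with $q,q'$ requires $q'>3$ (equivalent to $q<3/2$) and yields $CR^{3/q'-1}\|f\|_{L^q}$, with the endpoint $q=1$ covered by $\sup_{|x-y|\geq R}|x-y|^{-1}=R^{-1}$. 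Summing and balancing fixes $R^{3/p'-3/q'}=\|f\|_{L^q}/\|f\|_{L^p}$, and substitution gives an interpolation whose power of $\|f\|_{L^p}$ is
\begin{equation*}
1-\frac{3/p'-1}{3/p'-3/q'}=\frac{1-3/q'}{3/p'-3/q'}=\frac{(q'-3)/q'}{3(q'-p')/(p'q')}=\frac{(q'-3)p'}{3(q'-p')}=\theta,
\end{equation*}
matching the stated formula.

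For \eqref{eq:inverselaplacianestimate} I would follow a three-step chain: first H\"older's inequality with exponents $6$ and $3$ in $L^2$ gives $\|(-\Delta)^{-1}(f_1f_2)f_3\|_{L^2}\leq\|(-\Delta)^{-1}(f_1f_2)\|_{L^6}\|f_3\|_{L^3}$; next the Hardy--Littlewood--Sobolev inequality with gain of two derivatives in dimension $3$ produces $\|(-\Delta)^{-1}g\|_{L^6}\lesssim\|g\|_{L^{6/5}}$ applied to $g=f_1f_2$; finally one more H\"older, using $\tfrac{1}{2}+\tfrac{1}{3}=\tfrac{5}{6}$, gives $\|f_1f_2\|_{L^{6/5}}\leq\|f_1\|_{L^2}\|f_2\|_{L^3}$ and closes the estimate.

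Estimate \eqref{eq:soggelemma} is then an immediate consequence of \eqref{eq:sogge_gen} applied to $g=|f|^2$ at the endpoint pair $p=\infty$, $q=1$, which yields $\theta=1/3$ and the intermediate bound $\|(-\Delta)^{-1}|f|^2\|_{L^\infty}\lesssim\|f\|_{L^\infty}^{2/3}\|f\|_{L^2}^{4/3}$; Young's inequality with the conjugate exponents $3$ and $3/2$ absorbs the product into the sum $\|f\|_{L^\infty}^2+\|f\|_{L^2}^2$. I do not anticipate any genuine obstacle here: the only pieces that require care are the algebraic manipulation identifying the optimized exponent with the stated $\theta$ and the verification that the near/far argument degenerates gracefully at the endpoint exponents $p=\infty$ and $q=1$.
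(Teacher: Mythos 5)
Your proposal is correct and follows essentially the same route as the paper: the near/far splitting of the Riesz kernel with H\"older in $p$ and $q$ followed by optimization in $R$ for \eqref{eq:sogge_gen}, the H\"older--HLS--H\"older chain through $L^6$ and $L^{6/5}$ for \eqref{eq:inverselaplacianestimate}, and the endpoint choice $p=\infty$, $q=1$ plus Young's inequality for \eqref{eq:soggelemma}. The only difference is that you spell out the endpoint degenerations and the algebra identifying $\theta$, which the paper leaves implicit.
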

\begin{proof}
Let $R>0$, then we have
\begin{equation*}
4\pi((-\Delta)^{-1}f)(x)=\int\frac{1}{|y|}f(x-y)\,dx=\int_{|y|<R}\frac{1}{|y|}f(x-y)\,dx+\int_{|y|\geq R}\frac{1}{|y|}f(x-y)\,dx,
\end{equation*}
then by H\"older's inequality we have
\begin{equation*}
\|(-\Delta)^{-1}f\|_{L^\infty}\lesssim\left(\int_{|y|<R}|y|^{-p'}\,dy\right)^{1/p'}\|f\|_{L^p}+\left(\int_{|y|\geq R}|y|^{-q'}\,dy\right)^{1/q'}\|f\|_{L^q}.
\end{equation*}
The two integrals on the right hand side are finite by the assumptions on $p, q$. By optimizing the above inequality in $R$ we then get \eqref{eq:sogge_gen}. To prove \eqref{eq:inverselaplacianestimate} we apply H\"older and Hardy-Littlewood-Sobolev inequality to get 
\begin{equation*}
\|(-\Delta)^{-1}(f_1f_2)f_3\|_{L^2}\leq\|(-\Delta)^{-1}(f_1f_2)\|_{L^6}\|f_3\|_{L^3}\lesssim\|f_1f_2\|_{L^{6/5}}\|f_3\|_{L^3}.
\end{equation*}
Using again H\"older inequality for $\Vert f_1f_2\Vert_{L^{\frac{6}{5}}(\mathbb{R}^3)}$ we get (\ref{eq:inverselaplacianestimate}).\\
Inequality \eqref{eq:soggelemma} follows from \eqref{eq:sogge_gen} by choosing $p=\infty, q=1$ and by applying Young's inequality.
\end{proof}
Next Lemma will be useful to estimate the Hartree term in the fixed point argument in Section \ref{sect:LWP}.
\begin{lemma}
Let $u\in H^2(\R^3)$, then
\begin{equation}\label{eq:hartree_lip}
\|(-\Delta)^{-1}(|u|^2)u\|_{H^2}\lesssim\|u\|_{H^{3/4}}^2\|u\|_{H^2}.
\end{equation}
\end{lemma}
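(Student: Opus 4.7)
\emph{Plan.} Set $\phi := (-\Delta)^{-1}|u|^2$. Since the $H^2$-norm is equivalent to $\|\cdot\|_{L^2}+\|\Delta\,\cdot\,\|_{L^2}$, I will estimate $\|\phi u\|_{L^2}$ and $\|\Delta(\phi u)\|_{L^2}$ separately. For the top-order piece, I will expand by Leibniz as $\Delta(\phi u) = (\Delta\phi)u + 2\nabla\phi\cdot\nabla u + \phi\Delta u$ and exploit the crucial cancellation $\Delta\phi = -|u|^2$, which removes the inverse Laplacian from the first term. Each resulting factor will then be controlled by Hölder's inequality, the Sobolev embeddings $H^{3/4}(\R^3)\hookrightarrow L^4(\R^3)$, $H^{1/2}(\R^3)\hookrightarrow L^3(\R^3)$ and $H^2(\R^3)\hookrightarrow L^\infty(\R^3)$ valid in three dimensions, and the mapping properties of $(-\Delta)^{-1}$ and $\nabla(-\Delta)^{-1}$ collected in the preceding lemmas.

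\emph{Execution.} The $L^2$ piece is immediate from \eqref{eq:inverselaplacianestimate} with $(f_1,f_2,f_3)=(u,\bar u,u)$, which yields $\|\phi u\|_{L^2}\lesssim \|u\|_{L^2}\|u\|_{L^3}^2\lesssim\|u\|_{H^{3/4}}^3\leq\|u\|_{H^{3/4}}^2\|u\|_{H^2}$. For the three Leibniz contributions: the first gives $\|(\Delta\phi)u\|_{L^2}=\|u\|_{L^6}^3$, which by Hölder is $\leq\|u\|_{L^4}^2\|u\|_{L^\infty}\lesssim\|u\|_{H^{3/4}}^2\|u\|_{H^2}$. The cross term is handled via $\|\nabla\phi\cdot\nabla u\|_{L^2}\leq\|\nabla\phi\|_{L^3}\|\nabla u\|_{L^6}$: the Hardy-Littlewood-Sobolev bound $\nabla(-\Delta)^{-1}:L^{3/2}(\R^3)\to L^3(\R^3)$ produces $\|\nabla\phi\|_{L^3}\lesssim\||u|^2\|_{L^{3/2}}=\|u\|_{L^3}^2\lesssim\|u\|_{H^{3/4}}^2$, while $\|\nabla u\|_{L^6}\lesssim\|u\|_{H^2}$ by Sobolev. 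The remaining term is $\|\phi\Delta u\|_{L^2}\leq\|\phi\|_{L^\infty}\|\Delta u\|_{L^2}$, and applying \eqref{eq:sogge_gen} with $p=2$, $q=1$ (hence $\theta=2/3$) gives $\|\phi\|_{L^\infty}\lesssim\||u|^2\|_{L^2}^{2/3}\||u|^2\|_{L^1}^{1/3}=\|u\|_{L^4}^{4/3}\|u\|_{L^2}^{2/3}\lesssim\|u\|_{H^{3/4}}^2$. Summing the four contributions yields the claim.

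\emph{Main obstacle.} The only genuinely delicate point is identifying the exponent $3/4$, which is sharp because $H^{3/4}(\R^3)$ is exactly the fractional Sobolev space sitting at the critical embedding $H^{3/4}\hookrightarrow L^4$. The Lebesgue index $4$ appears both in the Hölder splitting of the cubic term $\||u|^2u\|_{L^2}=\|u\|_{L^6}^3$ and in the endpoint use of \eqref{eq:sogge_gen} that controls $\|\phi\|_{L^\infty}$; lowering the regularity below $3/4$ would force $|u|^2$ out of $L^2$ and simultaneously break both estimates. No heavy machinery is needed beyond this bookkeeping of Sobolev and Riesz-potential exponents, and the bound is then obtained by assembling the four pieces.
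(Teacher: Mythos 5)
Your proof is correct, but it follows a genuinely different route from the paper's. You expand $\Delta(\phi u)=(\Delta\phi)u+2\nabla\phi\cdot\nabla u+\phi\Delta u$ and use the cancellation $\Delta\phi=-|u|^2$ together with H\"older, the embeddings $H^{3/4}\hookrightarrow L^4$, $H^{1/2}\hookrightarrow L^3$, $H^2\hookrightarrow L^\infty$, the Riesz-potential bound $\nabla(-\Delta)^{-1}:L^{3/2}\to L^3$, and \eqref{eq:sogge_gen} with $(p,q)=(2,1)$; every step checks out (in particular $\theta=2/3$ is the right exponent there). The paper instead applies the fractional Leibniz rule of Lemma \ref{lemma:leibniz} to $\Lambda^2(\phi u)$, estimating $\|(-\Delta)^{-1}(1-\Delta)|u|^2\|_{L^6}\|u\|_{L^3}$ by Hardy--Littlewood--Sobolev and Kato--Ponce, and $\|(-\Delta)^{-1}|u|^2\|_{L^\infty}\|(1-\Delta)u\|_{L^2}$ by \eqref{eq:sogge_gen} with $p,q$ near $3/2$. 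Your argument is more elementary (no fractional Leibniz rule), at the price of being slightly lossier: your final claim that $3/4$ is \emph{sharp} is not correct. The paper's own proof yields the stronger bound $\|(-\Delta)^{-1}(|u|^2)u\|_{H^2}\lesssim\|u\|_{H^{1/2+\eps}}^2\|u\|_{H^2}$ for any $\eps>0$; the exponent $3/4$ in your version is an artifact of the particular H\"older splitting $\||u|^3\|_{L^2}\leq\|u\|_{L^4}^2\|u\|_{L^\infty}$, which insists on placing exactly one factor in $L^\infty$, rather than an intrinsic obstruction. Since the lemma only asserts the $H^{3/4}$ bound, this does not affect the validity of your proof, but the sharpness remark should be dropped.
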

\begin{proof}
We have
\begin{equation*}
\begin{aligned}
\|(-\Delta)^{-1}(|u|^2)u\|_{H^2}\lesssim&\|(-\Delta)^{-1}(1-\Delta)(|u|^2)u\|_{L^2}+\|(-\Delta)^{-1}(|u|^2)(1-\Delta)u\|_{L^2}\\
\lesssim&\|(-\Delta)^{-1}(1-\Delta)|u|^2\|_{L^6}\|u\|_{L^3}+\|(-\Delta)^{-1}|u|^2\|_{L^\infty}\|(1-\Delta)u\|_{L^2}.
\end{aligned}
\end{equation*}
By the Hardy-Littlewood-Sobolev inequality we have
\begin{equation*}
\|(-\Delta)^{-1}(1-\Delta)|u|^2\|_{L^6}\lesssim\|(1-\Delta)|u|^2\|_{L^{6/5}}\lesssim\|u\|_{L^3}\|(1-\Delta)u\|_{L^2},
\end{equation*}
where the last inequality follows from Lemma \ref{lemma:leibniz}. On the other hand, by using \eqref{eq:sogge_gen}, with $p, q$ sufficiently close to $\frac32$, and Sobolev embedding we see that
\begin{equation*}
\|(-\Delta)^{-1}|u|^2\|_{L^\infty}\lesssim\|u\|_{H^{\frac12+\eps}}^2.
\end{equation*}
Consequently,
\begin{equation*}
\|(-\Delta)^{-1}(|u|^2)u\|_{H^2}\lesssim\|u\|_{H^{\frac12+\eps}}^2\|u\|_{H^2}.
\end{equation*}
\end{proof}

\begin{lemma} Let $A\in H^1(\mathbb{R}^3)$ and $u\in H^2(\mathbb{R}^3)$. Then the following estimates hold:
\begin{align}
\Vert (\nabla-iA)u\Vert_{H^1(\mathbb{R}^3)}&\lesssim (1+\Vert A\Vert_{H^1(\mathbb{R}^3)})\Vert u\Vert_{H^2(\mathbb{R}^3)},\label{eq:nablaminusa}\\
\Vert \mathbb PJ(u,A)\Vert_{H^\frac{1}{2}(\mathbb{R}^3)}&\lesssim \Vert u\Vert_{H^1(\mathbb{R}^3)}\Vert u\Vert_{H^2(\mathbb{R}^3)}+\Vert A\Vert_{H^1(\mathbb{R}^3)}\Vert u\Vert_{H^2(\mathbb{R}^3)}^2,\label{eq:pjestimate}\\
\|\Delta_Au\|_{L^2}&\lesssim\|u\|_{H^2}+\|A\|_{H^1}^4\|u\|_{L^2},\label{eq:magLaptoH2}\\
\|u\|_{H^2}&\lesssim\|\Delta_Au\|_{L^2}+\|A\|_{H^1}^4\|u\|_{L^2},\label{eq:H2tomagLap}\\
\|(\nabla+iA)u\|_{L^6}&\lesssim\|u\|_{H^2}+\|A\|_{H^1}^4\|u\|_{L^2}.\label{eq:covar_L6}
\end{align}
\end{lemma}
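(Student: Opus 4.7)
The plan is to prove the five inequalities one at a time, using the same tool-kit throughout: the fractional Leibniz rule of Lemma~\ref{lemma:leibniz}, the Sobolev embeddings $H^1(\R^3)\hookrightarrow L^6(\R^3)$ and $H^2(\R^3)\hookrightarrow L^\infty(\R^3)$, the Gagliardo--Nirenberg interpolations
\begin{equation*}
\|u\|_{L^\infty}+\|\nabla u\|_{L^3}\lesssim\|u\|_{L^2}^{1/4}\|\Delta u\|_{L^2}^{3/4},\qquad \|u\|_{H^1}\lesssim\|u\|_{L^2}^{1/2}\|u\|_{H^2}^{1/2},
\end{equation*}
and Young's inequality to convert mixed products of $\|A\|_{H^1}$ and $\|u\|_{H^2}$ into sums.

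For \eqref{eq:nablaminusa}, I would write $(\nabla-iA)u=\nabla u-iAu$, so $\|\nabla u\|_{H^1}\le\|u\|_{H^2}$ directly, while Lemma~\ref{lemma:leibniz} distributes the derivative onto $A$ or $u$ and, combined with the above embeddings, yields $\|Au\|_{H^1}\lesssim\|A\|_{H^1}\|u\|_{H^2}$. For \eqref{eq:pjestimate}, the Leray projector $\mathbb P$ is bounded on $H^{1/2}$, so it suffices to estimate $J(u,A)=\IM(\bar u\nabla u)-|u|^2 A$ directly in $H^{1/2}$. The key ingredient is the three-dimensional product rule $\|fg\|_{H^{1/2}}\lesssim\|f\|_{H^1}\|g\|_{H^1}$, which follows from Lemma~\ref{lemma:leibniz} by splitting on H\"older pairs $(3,6)$ and invoking $H^1\hookrightarrow H^{1/2,3}\cap L^6$. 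Applied with $f=\bar u$, $g=\nabla u$, it produces $\|\IM(\bar u\nabla u)\|_{H^{1/2}}\lesssim\|u\|_{H^1}\|u\|_{H^2}$. For the trilinear piece, I would first bound $\||u|^2\|_{H^1}\lesssim\|u\|_{H^1}\|u\|_{H^2}$ by Lemma~\ref{lemma:leibniz} and $H^2\hookrightarrow L^\infty$, and then apply the same product rule to conclude $\||u|^2 A\|_{H^{1/2}}\lesssim\|A\|_{H^1}\|u\|_{H^2}^2$.

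Estimates \eqref{eq:magLaptoH2}, \eqref{eq:H2tomagLap} and \eqref{eq:covar_L6} all rest on the algebraic identity
\begin{equation*}
\Delta_A u=\Delta u-2iA\cdot\nabla u-i(\diver A)u-|A|^2u.
\end{equation*}
By H\"older and $\|A\|_{L^6}\lesssim\|A\|_{H^1}$, the first-order cross term satisfies $\|A\cdot\nabla u\|_{L^2}\le\|A\|_{L^6}\|\nabla u\|_{L^3}\lesssim\|A\|_{H^1}\|u\|_{L^2}^{1/4}\|\Delta u\|_{L^2}^{3/4}$; the $(\diver A)u$ term is handled identically through $\|u\|_{L^\infty}$; and $\||A|^2u\|_{L^2}\le\|A\|_{L^6}^2\|u\|_{L^6}\lesssim\|A\|_{H^1}^2\|u\|_{L^2}^{1/2}\|u\|_{H^2}^{1/2}$. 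In each case Young's inequality with exponents $(4,4/3)$ or $(2,2)$, respectively, produces the bound $\tfrac12\|u\|_{H^2}+C\|A\|_{H^1}^4\|u\|_{L^2}$, yielding \eqref{eq:magLaptoH2}. Estimate \eqref{eq:H2tomagLap} then follows from $\|u\|_{H^2}\lesssim\|\Delta u\|_{L^2}+\|u\|_{L^2}$ by writing $\Delta u=\Delta_A u+2iA\cdot\nabla u+i(\diver A)u+|A|^2u$, using the same cross-term bounds with a small Young constant, and absorbing the resulting $\tfrac12\|u\|_{H^2}$ into the left-hand side. Finally, \eqref{eq:covar_L6} is obtained by splitting $(\nabla+iA)u$: the gradient part is controlled by $\|\nabla u\|_{L^6}\lesssim\|u\|_{H^2}$ (Sobolev), and the potential part by $\|Au\|_{L^6}\le\|A\|_{L^6}\|u\|_{L^\infty}\lesssim\|A\|_{H^1}\|u\|_{L^2}^{1/4}\|\Delta u\|_{L^2}^{3/4}$ followed by the same Young step.

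The only step that requires care is the choice of the Young exponents: every cross term generated by $\Delta_A u$ reduces to a monomial of the form $\|A\|_{H^1}^\alpha\|u\|_{L^2}^{\beta}\|u\|_{H^2}^{1-\beta}$, with the specific values $(\alpha,\beta)\in\{(1,1/4),(2,1/2)\}$, and both combinations collapse to $\|A\|_{H^1}^4\|u\|_{L^2}+\varepsilon\|u\|_{H^2}$ only because of the precise balance between the $L^6$ interpolation of $A$ and the Gagliardo--Nirenberg interpolation of $\nabla u$ or $u$. This algebraic coincidence is what explains the uniform appearance of the fourth power $\|A\|_{H^1}^4$ in all three estimates \eqref{eq:magLaptoH2}--\eqref{eq:covar_L6}.
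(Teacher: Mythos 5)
Your proposal is correct and follows essentially the same route as the paper: expand the covariant operators, apply the Kato--Ponce inequality of Lemma~\ref{lemma:leibniz} with the H\"older pairs $(3,6)$, interpolate via Gagliardo--Nirenberg to produce the monomials $\|A\|_{H^1}\|u\|_{L^2}^{1/4}\|u\|_{H^2}^{3/4}$ and $\|A\|_{H^1}^2\|u\|_{L^2}^{1/2}\|u\|_{H^2}^{1/2}$, and close with Young's inequality, which is exactly how the fourth power of $\|A\|_{H^1}$ arises in the paper as well. The only cosmetic deviations are that you bound $\|(\nabla+iA)u\|_{L^6}$ by splitting directly into $\|\nabla u\|_{L^6}+\|Au\|_{L^6}$ rather than passing through $\|\nabla(\nabla-iA)u\|_{L^2}$ as the paper does, and that you retain the $(\diver A)u$ term which the paper silently drops under the Coulomb gauge; both variants are valid and yield the stated estimates.
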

\begin{proof}
We begin with the proof of \eqref{eq:nablaminusa}.
By using Lemma \ref{lemma:leibniz} we have
\begin{align*}
\Vert (\nabla-iA)u &\Vert_{H^1(\R^3)}\leq\Vert\nabla u\Vert_{H^1(\mathbb{R}^3)}+\Vert Au\Vert_{H^1(\mathbb{R}^3)}\\
&\lesssim\Vert u\Vert_{H^2(\mathbb{R}^3)}+\Vert A\Vert_{H^1(\mathbb{R}^3)}\Vert u\Vert_{L^\infty(\mathbb{R}^3)}+\Vert A\Vert_{L^6(\mathbb{R}^3)}\Vert u\Vert_{H^{1, 3}(\R^3)}\\
&\lesssim \Vert u\Vert_{H^2(\mathbb{R}^3)}+\Vert A\Vert_{H^1(\mathbb{R}^3)}\Vert u\Vert_{H^2(\mathbb{R}^3)}+\Vert A\Vert_{H^1(\mathbb{R}^3)}\Vert u\Vert_{H^\frac{3}{2}(\mathbb{R}^3)},
\end{align*}
where in the last inequality we used the Sobolev embedding theorem. Thus \eqref{eq:nablaminusa} is proved.
We now consider \eqref{eq:pjestimate}; by Lemma \ref{lemma:leibniz},
\begin{align*}
\Vert\overline{u}\nabla u \Vert_{H^{\frac{1}{2}}(\mathbb{R}^3)}&\lesssim\Vert\overline{u}\Vert_{H^{\frac{1}{2},3}(\mathbb{R}^3)}\Vert\nabla u\Vert_{L^6(\mathbb{R}^3)}+\Vert\overline{u}\Vert_{L^6(\mathbb{R}^3)}\Vert\nabla u\Vert_{H^{\frac{1}{2},3}(\mathbb{R}^3)}\\
&\lesssim\Vert\overline{u}\Vert_{H^{\frac{1}{2},3}(\mathbb{R}^3)}\Vert\nabla u\Vert_{H^1(\mathbb{R}^3)}+\Vert\overline{u}\Vert_{H^1(\mathbb{R}^3)}\Vert\nabla u\Vert_{H^{\frac{1}{2},3}(\mathbb{R}^3)}\\
&\lesssim \Vert u\Vert_{H^1(\mathbb{R}^3)}\Vert u\Vert_{H^2(\mathbb{R}^3)}
\end{align*}
and
\begin{align*}
\Vert A\vert u\vert^2\Vert_{H^\frac{1}{2}(\mathbb{R}^3)}&\lesssim\Vert A\Vert_{H^\frac{1}{2}(\mathbb{R}^3)}\Vert u\Vert_{L^\infty(\mathbb{R}^3)}^2+\Vert A\Vert_{L^6}\Vert\vert u\vert^2\Vert_{H^{\frac{1}{2},3}(\mathbb{R}^3)}\\
&\lesssim\Vert A\Vert_{H^1(\mathbb{R}^3)}\Vert u\Vert_{H^2(\mathbb{R}^3)}^2.
\end{align*}
By adding the two estimates above we then obtain
\begin{equation*}
\|\mathbb PJ\|_{H^{1/2}}\lesssim\|J\|_{H^{1/2}}\lesssim\|u\|_{H^1}\|u\|_{H^2}+\|A\|_{H^1}\|u\|_{H^2}^2.
\end{equation*}
For \eqref{eq:magLaptoH2} we have
\begin{equation*}
\begin{aligned}
\|\Delta_Au\|_{L^2}\lesssim&\|u\|_{H^2}+\|A\cdot\nabla u\|_{L^2}+\||A|^2u\|_{L^2}\\
\lesssim&\|u\|_{H^2}+\|A\|_{H^1}\|u\|_{H^{3/2}}+\|A\|_{H^1}^2\|u\|_{H^1}\\
\lesssim&\|u\|_{H^2}+\|A\|_{H^1}\|u\|_{L^2}^{1/4}\|u\|_{H^2}^{3/4}+\|A\|_{H^1}^2\|u\|_{L^2}^{1/2}\|u\|_{H^2}^{1/2}.
\end{aligned}
\end{equation*}
By using Young's inequality we obtain \eqref{eq:magLaptoH2}. Estimate \eqref{eq:H2tomagLap} is proved in an analogous way. Finally, for \eqref{eq:covar_L6} we have
\begin{equation*}
\begin{aligned}
\|(\nabla-iA)u\|_{L^6}\lesssim&\|\nabla(\nabla-iA)u\|_{L^2}\\
\lesssim&\|\Delta_Au\|_{L^2}+\|A(\nabla-iA)\|_{L^2}\\
\lesssim&\|\Delta_Au\|_{L^2}+\|A\|_{H^1}\|u\|_{H^{3/2}}+\|A\|_{H^1}^2\|u\|_{H^1}
\end{aligned}
\end{equation*}
and proceed as for the previous estimates.
\end{proof}
Let us now state the Strichartz estimates for the wave equation we are going to use in our paper. For a proof see for example \cite{GV,Tao} and references therein.
\begin {lemma}[Strichartz estimates for the wave equation]\label{lemma:strichwave}
Let $I$ be a time interval, and let $B:I\times\mathbb{R}^3\mapsto\mathbb{C}$ be a Schwartz solution to the wave equation $\Box B=F$ with initial data $B(0)=B_0$, $\partial_{t} B(0)=B_1$. Then the following estimate holds
\begin{align*}
\Vert B\Vert_{L_t^qL_x^r(I\times\mathbb{R}^3)}&+\Vert B\Vert_{C_t\dot{H}^s_x(I\times\mathbb{R}^3)}+\Vert\partial_{t} B\Vert_{C_t\dot{H}^{s-1}_x(I\times\mathbb{R}^3)}\\
&\lesssim\Vert B_0\Vert_{\dot{H}^s(\mathbb{R}^3)}+\Vert B_1\Vert_{\dot{H}^{s-1}}+\Vert F\Vert_{L_t^{\tilde{q}'}L_x^{\tilde{r}'}(I\times\mathbb{R}^3)}
\end{align*}
whenever $s\geq 0$, $2\leq q,\tilde{q}\leq\infty$ and $2\leq r,\tilde{r}<\infty$ obey the scaling condition
\begin{displaymath}
\frac{1}{q}+\frac{3}{r}=\frac{3}{2}-s=\frac{1}{\tilde{q}'}+\frac{3}{\tilde{r}'}-2
\end{displaymath}
and the wave admissibility condition
\begin{displaymath}
\frac{1}{q}+\frac{1}{r},\,\frac{1}{\tilde{q}}+\frac{1}{\tilde{r}}\leq\frac{1}{2}
\end{displaymath}
\end{lemma}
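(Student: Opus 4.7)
The plan is to reduce everything to Strichartz estimates for the half-wave propagator $e^{\pm it|\nabla|}$ and then interpolate a dispersive bound against the energy identity. Writing the solution via Duhamel as
\[
B(t)=\cos(t|\nabla|)B_0+\frac{\sin(t|\nabla|)}{|\nabla|}B_1+\int_0^t\frac{\sin((t-s)|\nabla|)}{|\nabla|}F(s)\,ds,
\]
I first reduce to bounding $U(t)f:=e^{it|\nabla|}f$ in mixed Lebesgue norms: the homogeneous Sobolev norms on the right absorb the $|\nabla|^{-1}$, while $\cos$ and $\sin$ split into two half-wave evolutions. It is then enough to prove, for every wave-admissible pair $(q,r)$ (with $2\leq q\leq\infty$, $2\leq r<\infty$, $1/q+1/r\leq 1/2$) and $s$ fixed by the scaling relation $1/q+3/r=3/2-s$, the homogeneous estimate $\|U(t)f\|_{L^q_tL^r_x}\lesssim\|f\|_{\dot H^s}$.

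For the homogeneous bound I would use Littlewood--Paley projectors $P_\lambda$ to localize frequencies at scale $\lambda$. The two inputs are the energy identity $\|U(t)P_\lambda f\|_{L^2_x}=\|P_\lambda f\|_{L^2_x}$ and the pointwise dispersive decay
\[
\|U(t)P_\lambda f\|_{L^\infty_x}\lesssim \lambda\,(\lambda|t|)^{-1}\|P_\lambda f\|_{L^1_x},
\]
which follows in $\R^3$ from stationary phase on the Fourier representation of $e^{it|\xi|}\widehat{P_\lambda f}$. Interpolating between these two bounds gives, for each $2\leq r\leq\infty$,
\[
\|U(t)P_\lambda f\|_{L^r_x}\lesssim \lambda^{1-2/r}(\lambda|t|)^{-(1-2/r)}\|P_\lambda f\|_{L^{r'}_x}.
\]
Applying the standard $TT^*$ argument with Hardy--Littlewood--Sobolev in the $t$-variable (Keel--Tao) converts this into $\|U(t)P_\lambda f\|_{L^q_tL^r_x}\lesssim\lambda^{s}\|P_\lambda f\|_{L^2_x}$ whenever $1/q+1/r<1/2$ (strict wave-admissibility) with $s$ given by the scaling. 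Summing dyadically in $\lambda$ via Littlewood--Paley square-function theory produces the desired homogeneous $\dot H^s\to L^q_tL^r_x$ bound.

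The inhomogeneous estimate with the $L^{\tilde q'}_t L^{\tilde r'}_x$ datum $F$ then follows by pairing the homogeneous bounds for the two admissible pairs through a $TT^*$-type argument: Minkowski and the homogeneous Strichartz give control of $\int_0^T U(t-s)F(s)ds$ with the $s$-integral extended to $\R$, and the retarded version is recovered using the Christ--Kiselev lemma, valid precisely because $\tilde q'<q$ away from the double endpoint. The $C_t\dot H^s$ bound on $B$ and the $C_t\dot H^{s-1}$ bound on $\partial_t B$ come directly from the energy identity applied to each half-wave together with the inhomogeneous dual Strichartz bound on the Duhamel integral. The main obstacle would be the endpoint case $1/q+1/r=1/2$ with $(q,r)=(2,\infty)$ in three dimensions, but this is explicitly excluded by the assumption $r<\infty$, so only the sharp endpoint Keel--Tao machinery for wave-admissible non-sharp pairs is needed and the argument stays within classical interpolation and $TT^*$ tools.
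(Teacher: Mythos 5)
The paper does not actually prove this lemma; it simply cites Ginibre--Velo and Tao, and your sketch is precisely the standard argument contained in those references (Littlewood--Paley localization, energy identity plus dispersive decay, $TT^*$ with Hardy--Littlewood--Sobolev, Christ--Kiselev for the retarded term), so the approach is the right one. Two details should be corrected before the sketch could stand as a proof. First, the frequency-localized dispersive bound in $\R^3$ is $\Vert U(t)P_\lambda f\Vert_{L^\infty_x}\lesssim\lambda^3(\lambda|t|)^{-1}\Vert P_\lambda f\Vert_{L^1_x}$, not $\lambda(\lambda|t|)^{-1}\Vert P_\lambda f\Vert_{L^1_x}$: with your normalization the interpolated $L^{r'}_x\to L^r_x$ kernel bound carries no power of $\lambda$ at all, and the $TT^*$ argument would output $\Vert U(t)P_\lambda f\Vert_{L^q_tL^r_x}\lesssim\Vert P_\lambda f\Vert_{L^2}$ rather than the needed $\lambda^s\Vert P_\lambda f\Vert_{L^2}$ (the correct constant $\lambda^{2(1-2/r)}$ is exactly what produces $\lambda^{s}$ with $s=1-2/r$ on the sharp line). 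Second, the roles of the admissibility conditions are inverted in your write-up: the $TT^*$/HLS step applies exactly on the sharp-admissible line $\frac1q+\frac1r=\frac12$ with $q>2$ (which includes $(q,r)=(4,4)$, the pair actually used in the contraction argument of Section 3), the strictly admissible pairs $\frac1q+\frac1r<\frac12$ then follow by Sobolev embedding in $x$ or interpolation with the energy estimate, and Keel--Tao is only required at the genuine endpoint $q=2$, which is excluded here since $r<\infty$ forces $q>2$. With these two repairs the argument is complete and is the one the paper outsources to the literature.
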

As a consequence we also obtain the following energy estimate.
\begin{lemma}\label{lemma:wave}
Let $s\in\mathbb{R}$, $B_0\in H^s(\mathbb{R}^3)$, $B_1\in H^{s-1}(\mathbb{R}^3)$ and $F\in L^1([0,T];H^{s-1}(\mathbb{R}^3))$, $T>0$, then 
$$B\in C([0,T];H^s(\mathbb{R}^3))\cap H^{s-1}([0,T];H^{s-1}(\mathbb{R}^3))$$
defined as in previous Lemma satisfies
\begin{equation}\label{eq:enest}\begin{split}
\Vert B\Vert_{C_tH^s_x([0, T]\times\mathbb{R}^3)}&+\Vert\partial_{t} B\Vert_{C_tH^{s-1}([0, T]\times\mathbb{R}^3)}\\
&\lesssim (1+T)(\Vert B_0\Vert_{H^s(\mathbb{R}^3)}+\Vert B_1\Vert_{H^{s-1}(\mathbb{R}^3)}+\Vert F\Vert_{L_t^1H^{s-1}_x([0, T]\times\mathbb{R}^3)}).
\end{split}
\end{equation}
\end{lemma}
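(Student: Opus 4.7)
The plan is to reduce to the case $s=1$ via a Fourier multiplier, apply the classical wave energy identity, and then recover the $L^2$ part of the norm of $B$ by a time integration of $\partial_t B$, which is what produces the factor $(1+T)$.

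First I would set $\tilde B:=(1-\Delta)^{(s-1)/2}B$, $\tilde B_0:=(1-\Delta)^{(s-1)/2}B_0$, $\tilde B_1:=(1-\Delta)^{(s-1)/2}B_1$ and $\tilde F:=(1-\Delta)^{(s-1)/2}F$. Since the Fourier multiplier $(1-\Delta)^{(s-1)/2}$ commutes with $\Box$, $\tilde B$ solves $\Box\tilde B=\tilde F$ with $\tilde B(0)=\tilde B_0$, $\partial_{t}\tilde B(0)=\tilde B_1$; moreover $\|\tilde B_0\|_{H^1}\simeq\|B_0\|_{H^s}$, $\|\tilde B_1\|_{L^2}\simeq\|B_1\|_{H^{s-1}}$ and $\|\tilde F\|_{L^1_tL^2_x}\simeq\|F\|_{L^1_tH^{s-1}_x}$. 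Hence it suffices to prove \eqref{eq:enest} in the case $s=1$.

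For $s=1$, I would test the equation $\Box B=F$ against $\partial_{t} B$ and integrate in space to obtain
\[
\frac{d}{dt}\bigl(\|\nabla B(t)\|_{L^2}^2+\|\partial_{t} B(t)\|_{L^2}^2\bigr)=2\RE\int_{\R^3}\partial_{t} B\,\bar F\,dx.
\]
Setting $\mathcal E(t):=\|\nabla B(t)\|_{L^2}^2+\|\partial_{t} B(t)\|_{L^2}^2$, the right-hand side is controlled by $2\sqrt{\mathcal E(t)}\|F(t)\|_{L^2}$, so $\frac{d}{dt}\sqrt{\mathcal E(t)}\le \|F(t)\|_{L^2}$, which integrates to $\sup_{t\in[0,T]}\sqrt{\mathcal E(t)}\lesssim \|\nabla B_0\|_{L^2}+\|B_1\|_{L^2}+\|F\|_{L^1_tL^2_x}$. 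This gives the $\dot H^1$-part of $B$ and the $L^2$-part of $\partial_{t} B$, but not yet the full $H^1$ bound on $B$ itself. To include the $L^2$ norm of $B$, I would write $B(t)=B_0+\int_0^t\partial_{t} B(s)\,ds$, which yields $\|B(t)\|_{L^2}\le \|B_0\|_{L^2}+T\sup_{t\in[0,T]}\sqrt{\mathcal E(t)}$. Combining the two inequalities produces exactly the factor $(1+T)$ in \eqref{eq:enest}. The continuity $B\in C([0,T];H^s)$ and $\partial_{t} B\in C([0,T];H^{s-1})$ follows in a standard way by regularizing $(B_0,B_1,F)$ by smooth, compactly supported data, applying the classical theory to get Schwartz solutions, and then passing to the limit thanks to the linearity of the equation together with the energy estimate just proved applied to differences.

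There is no substantial obstacle here; the only point requiring attention is recognizing that the factor $(1+T)$ in \eqref{eq:enest} arises precisely from reconstructing $\|B(t)\|_{L^2}$ from $\|\partial_{t} B\|_{L^\infty_tL^2_x}$ by time integration, while the homogeneous $\dot H^1\times L^2$ bound itself is uniform in $T$.
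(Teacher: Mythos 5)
Your proof is correct, but it takes a different route from the paper: the paper does not give a self-contained argument for Lemma \ref{lemma:wave} at all, instead presenting it as a direct consequence of the Strichartz estimates of Lemma \ref{lemma:strichwave} (using the energy-admissible pair $q=\infty$, $r=2$, $\tilde q'=1$, $\tilde r'=2$, which yields the homogeneous bound $\Vert B\Vert_{C_t\dot H^s}+\Vert\partial_t B\Vert_{C_t\dot H^{s-1}}\lesssim \Vert B_0\Vert_{\dot H^s}+\Vert B_1\Vert_{\dot H^{s-1}}+\Vert F\Vert_{L^1_t\dot H^{s-1}_x}$). You instead reduce to $s=1$ by conjugating with $(1-\Delta)^{(s-1)/2}$ and run the classical energy identity, which is more elementary and entirely self-contained, avoiding any appeal to dispersive machinery. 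Note that both routes face the same residual issue: the homogeneous estimate (whether obtained from Strichartz or from the energy identity) controls only $\dot H^1\times L^2$ after your reduction, and the inhomogeneous $L^2$ part of $B$ must be recovered by writing $B(t)=B_0+\int_0^t\partial_t B(s)\,ds$; your proposal makes this step, and hence the origin of the factor $(1+T)$, explicit, which the paper leaves implicit. The only cosmetic point is that the differential inequality $\frac{d}{dt}\sqrt{\mathcal E}\le\Vert F\Vert_{L^2}$ should be justified at points where $\mathcal E$ vanishes (e.g.\ by working with $\sqrt{\mathcal E+\eps}$ and letting $\eps\to 0$), but this is standard and does not affect the argument.
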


We conclude this Section by recalling some results concerning the Schr\"odinger propagator associated to the magnetic Laplacian $\Delta_A$. More precisely, let $A$ be a given, time dependent, divergence-free vector field, we then consider the following initial value problem
\begin{equation}\label{eq:schrmag}
\left\{\begin{array}{l} i\partial_{t} u=-\frac12\Delta_A u\\
u(s)=f,\end{array}\right.
\end{equation}
and we study the properties of its solution.
\begin{proposition}\label{prop:prop}
Let $0<T<\infty$ and let us assume that $A\in C([0, T];H^1(\R^3))$, $\d_tA\in L^1([0, T];L^3(\R^3))$. Then there exists a unique $u\in C([0, T];H^2(\R^3))\cap C^1([0, T];L^2(\R^3))$ solution to \eqref{eq:schrmag}. Moreover, it holds
\begin{equation}\label{eq:magschr_prop}
\|u\|_{L^\infty(0, T;H^2(\R^3))}\lesssim\|f\|_{H^2}\left(1+\|A\|_{L^\infty_tH^1_x}^4\right)e^{\|\d_tA\|_{L^1_tL^3_x}}.
\end{equation}
\end{proposition}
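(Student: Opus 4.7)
The plan is to construct the evolution operator for the time-dependent family $H(t) := -\tfrac12 \Delta_{A(t)}$ along the lines of Kato's theory. For each fixed $t$, the norm equivalence \eqref{eq:magLaptoH2}--\eqref{eq:H2tomagLap} ensures that $H(t)$ is self-adjoint on $L^2(\R^3)$ with common domain $H^2(\R^3)$. I would first regularize, replacing $A$ by smooth divergence-free fields $A_n$ (for instance by space-time mollification) with $A_n \to A$ in $C_t H^1_x$ and $\partial_t A_n \to \partial_t A$ in $L^1_t L^3_x$, and solve \eqref{eq:schrmag} for each $n$ by the standard semigroup theory for smooth generators. Uniqueness and $L^2$-conservation are immediate consequences of self-adjointness: testing $i\partial_t u = -\tfrac12\Delta_A u$ against $\bar u$ and taking the real part gives $\tfrac{d}{dt}\|u\|_{L^2}^2 = 0$, and the same identity applied to the difference of two $H^2$-solutions forces uniqueness.

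The core of the argument is the a priori $H^2$-estimate \eqref{eq:magschr_prop}. Let $v := \Delta_A u$. A direct computation using $\diver A = 0$ shows that the time-commutator assembles cleanly into a covariant form:
\begin{equation*}
[\partial_t,\Delta_A]\,u = -2i(\partial_t A)\cdot\nabla u - 2A\cdot(\partial_t A)\,u = -2i(\partial_t A)\cdot(\nabla-iA)u,
\end{equation*}
so that $v$ satisfies $i\partial_t v = -\tfrac12 \Delta_A v + G$ with $G = 2(\partial_t A)\cdot(\nabla - iA)u$. Testing against $\bar v$, the $\Delta_A v$ term drops by self-adjointness and we obtain $\tfrac{d}{dt}\|v\|_{L^2} \leq \|G\|_{L^2}$. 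By H\"older together with \eqref{eq:covar_L6} and \eqref{eq:H2tomagLap}, and using $L^2$-conservation to replace $\|u\|_{L^2}$ by $\|f\|_{L^2}$,
\begin{equation*}
\|G\|_{L^2}\lesssim \|\partial_t A\|_{L^3}\|(\nabla-iA)u\|_{L^6}\lesssim \|\partial_t A\|_{L^3}\bigl(\|v\|_{L^2}+\|A\|_{H^1}^4\|f\|_{L^2}\bigr).
\end{equation*}
Gronwall's inequality, combined with the initial bound $\|v(s)\|_{L^2}\lesssim(1+\|A\|_{L^\infty_t H^1_x}^4)\|f\|_{H^2}$ coming from \eqref{eq:magLaptoH2}, then produces \eqref{eq:magschr_prop}.

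Once this uniform $H^2$-bound is applied to the regularized solutions $u_n$, a weak-$*$ compactness argument extracts a limit $u\in L^\infty_t H^2_x$; convergence $A_n\to A$ in $C_t H^1_x$ is enough to identify $\Delta_{A_n}u_n \rightharpoonup \Delta_A u$ and to pass to the limit in \eqref{eq:schrmag}. Strong continuity $u\in C_t H^2_x$ follows by a standard weak-to-strong argument: weak continuity is automatic, while \eqref{eq:magschr_prop} applied both forwards and backwards in time (time-reversibility of the equation) gives upper semicontinuity of $\|u(t)\|_{H^2}$. Finally $u\in C^1_t L^2_x$ is read off the equation itself, since \eqref{eq:magLaptoH2} together with continuity of $A$ in $H^1$ forces $\Delta_A u\in C_t L^2_x$.

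The main obstacle is closing the estimate without losing control on $A$ beyond the stated hypotheses. Two observations are crucial. First, the Coulomb gauge kills a would-be $i(\diver\partial_t A)\,u$ contribution to the commutator, which would otherwise demand $\partial_t A\in L^1 W^{1,3}$. Second, the reorganization of the commutator into the covariant derivative $(\nabla-iA)u$ is what prevents a spurious $(1+\|A\|_{H^1})$ factor from appearing inside the exponent in \eqref{eq:magschr_prop}, so that the dependence on the magnetic potential separates cleanly into a polynomial prefactor $(1+\|A\|_{L^\infty_t H^1_x}^4)$ and the exponential $e^{\|\partial_t A\|_{L^1_t L^3_x}}$. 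The choice $L^3$ for $\partial_t A$ is, moreover, the precise Sobolev-dual exponent of $\nabla_A u \in L^6$ at $H^2$-regularity, so that the forcing lands in $L^2$ and Gronwall closes.
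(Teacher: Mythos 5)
Your proposal is correct and follows essentially the same route as the source the paper cites for this result (\cite{NW1}) and as the paper's own Lemma \ref{lemma:apriori}: construct the propagator \`a la Kato via regularization, and close the $H^2$ bound by an energy estimate on $v=\Delta_A u$, using the Coulomb-gauge commutator identity $[\partial_t,\Delta_A]u=-2i(\partial_t A)\cdot(\nabla-iA)u$ together with H\"older ($L^3\times L^6$), \eqref{eq:covar_L6}, \eqref{eq:magLaptoH2}--\eqref{eq:H2tomagLap} and Gronwall. The only point worth tightening is the weak-to-strong continuity step, where the upper semicontinuity should be run on the equivalent quantity $\|\Delta_{A(t)}u(t)\|_{L^2}$ (whose Gronwall bound tends to $\|\Delta_{A(t_0)}u(t_0)\|_{L^2}$ as $t\to t_0$) rather than directly on $\|u(t)\|_{H^2}$, whose prefactor $(1+\|A\|_{L^\infty_tH^1_x}^4)$ does not tend to $1$.
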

\begin{proposition}\label{eq:Duhamellemma}
Let $A\in L^\infty([0, T];H^1(\R^3))\cap W^{1,1}([0,T];L^3)$, $f\in L^1([0, T];H^{-2}(\mathbb{R}^3))$ and let $v\in C([0,T];L^2(\mathbb{R}^3))\cap W^{1,1}([0,T];H^{-2}(\mathbb{R}^3))$ 
be solution  to
$$i\partial_{t} v=-\frac12\Delta_Av+f$$
Then for every $t_0\in [0,T]$,
$$v(t)=U_A(t,t_0)-i\int_{t_0}^t U_A(t,s)f(s)ds$$
\end{proposition}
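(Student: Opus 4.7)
The plan is to establish the Duhamel formula by differentiating the composition $s\mapsto U_A(t,s)v(s)$ on $[t_0,t]$, relying on the evolution operator $U_A$ furnished by Proposition \ref{prop:prop} together with its group property $U_A(t,s)U_A(s,r)=U_A(t,r)$ and its unitarity on $L^2$.

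First I would fix $t\in[0,T]$ and set $g(s):=U_A(t,s)v(s)$ for $s\in[t_0,t]$, so that $g(t)=v(t)$ and $g(t_0)=U_A(t,t_0)v(t_0)$; the identity then reduces to computing $g(t)-g(t_0)$ via integration of $\partial_s g$. The key operator identity $\partial_s U_A(t,s)\varphi=-\tfrac{i}{2}U_A(t,s)\Delta_{A(s)}\varphi$, for $\varphi$ in a suitable dense class, follows by differentiating $s\mapsto U_A(t,s)U_A(s,r)\varphi=U_A(t,r)\varphi$ (constant in $s$) using the forward equation for $U_A(s,r)$ provided by Proposition \ref{prop:prop}. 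Combining this with the equation $\partial_s v=\tfrac{i}{2}\Delta_A v-if$ and applying the product rule, the cross terms involving $\Delta_A$ cancel and one obtains
\begin{equation*}
\partial_s g(s)=\partial_s\bigl(U_A(t,s)\bigr)v(s)+U_A(t,s)\partial_s v(s)=-iU_A(t,s)f(s).
\end{equation*}
Integrating from $t_0$ to $t$ in $H^{-2}$ yields the desired Duhamel formula.

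To handle the low regularity in the statement ($f\in L^1_tH^{-2}_x$, $v\in C_tL^2_x\cap W^{1,1}_tH^{-2}_x$), I would first carry out the argument for smooth approximations of $v(t_0)$ and $f$ for which $\Delta_{A(s)}v(s)$ makes sense as an $L^2$ object, so that the product rule is classical; then pass to the limit using the boundedness of $U_A(t,s)$ on $L^2$ (by mass conservation proven via Proposition \ref{prop:prop}) and, by duality with respect to the reverse-time evolution, on $H^{-2}$. The main delicate point is precisely this justification of the product rule and of the $H^{-2}$-valued integral: since $\partial_s v$ only exists in $H^{-2}$ and $U_A(t,s)$ was constructed in Proposition \ref{prop:prop} as an $H^2$-to-$H^2$ (and, by interpolation and duality, $L^2$-to-$L^2$ and $H^{-2}$-to-$H^{-2}$) map, one extends the action of $U_A(t,s)$ to $H^{-2}$ via duality $\langle U_A(t,s)\psi,\phi\rangle=\langle\psi,U_A(t,s)^{\ast}\phi\rangle=\langle\psi,U_{A}(s,t)\phi\rangle$ and then verifies the cancellation in the weak sense before concluding by Lebesgue differentiation.
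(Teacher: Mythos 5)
The paper gives no proof of this proposition: it simply defers to \cite{NW1}, and the argument there is precisely the one you outline — differentiate $s\mapsto U_A(t,s)v(s)$ using the backward equation for the propagator, integrate, and handle the low regularity by pairing against $U_A(s,t)\phi$ for $\phi\in H^2$ (equivalently, extending $U_A(t,s)$ to $H^{-2}$ by duality through its $L^2$-adjoint $U_A(s,t)$). Your proposal is correct; the one step to tighten is the identity $\partial_s U_A(t,s)\varphi=-\tfrac{i}{2}U_A(t,s)\Delta_{A(s)}\varphi$, which should be obtained from the difference quotient $U_A(t,s+h)\varphi-U_A(t,s)\varphi=U_A(t,s+h)\bigl(\varphi-U_A(s+h,s)\varphi\bigr)$ together with the forward equation and the uniform $L^2$-bounds, rather than by applying the product rule to $s\mapsto U_A(t,s)U_A(s,r)\varphi$, since that already presupposes the differentiability in $s$ one is trying to establish.
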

\begin{proof}
See \cite{NW1} for a proof of Propositions \ref{prop:prop} and \ref{eq:Duhamellemma}.
\end{proof}
From Proposition \ref{prop:prop} we can then define the propagator $U_A(t, s)$ associated to \eqref{eq:schrmag}, i.e. $U_A(t, s)f=u(t)$, where $u$ is the solution in Proposition \ref{prop:prop}, and $U_A$ satisfies the following properties:
\begin{itemize}
\item $U_A(t, s)H^2\subset H^2$, for any $t, x\in[0, T]$;
\item $U_A(t, t)=\mathbb I$;
\item $U_A(t_1, t_2)U_A(t_2, t_3)=U_A(t_1, t_3)$, for any $t_1, t_2, t_3\in[0, T]$.
\end{itemize}
Moreover, by \eqref{eq:magschr_prop} we have
\begin{equation*}
\mathcal K_2:=\sup_{t, s\in[0, T]}\|U_A(t, s)\|_{H^2\to H^2}\leq\left(1+\|A\|_{L^\infty_tH^1}^4\right)e^{\|\d_tA\|_{L^1_tL^3_x}}.
\end{equation*}
From the unitarity of $U_A(t, s)$ in $L^2$, $\|U_A(t, s)f\|_{L^2}=\|f\|_{L^2}$, and by interpolation, we can then infer
\begin{equation*}
\sup_{t, s\in[0, T]}\|U_A(t, s)f\|_{H^s\to H^s}<\infty,\quad\forall\;s\in[0, 2].
\end{equation*}

\section{Local well-posedness}\label{sect:LWP}
In this section we are going to prove the local well-posedness result stated in Theorem \ref{th:1} by using a fixed point argument. We split the proof in two parts: in Proposition \ref{prop:existence} we are going to show the existence and uniqueness of a local solution by means of a fixed point argument, then Proposition \ref{prop:cont_dep} will be about the continuous dependence of the solution on the initial data.
\begin{proposition}\label{prop:existence}
Let $\gamma>\frac{3}{2}$. For all $(u_0,A_0,A_1)\in X$ there exists $T_{max}>0$ and a unique maximal solution $(u,A)$ to \eqref{eq:msc} such that $u\in C([0,T_{max});H^2(\mathbb{R}^3))$,  $A\in C([0,T_{max});H^{\frac{3}{2}}(\mathbb{R}^3)\cap C^1([0,T_{max});H^{\frac{1}{2}}(\mathbb{R}^3))$, $\diver\,A=0$. Moreover  the blowup alternative holds true. 
\end{proposition}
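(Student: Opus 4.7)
My plan is to run a Picard iteration for the Coulomb-gauge system \eqref{eq:msc} in a closed ball $Y_T^R$ of the space
\begin{equation*}
Y_T := C([0,T];H^2)\times\bigl(C([0,T];H^{3/2})\cap C^1([0,T];H^{1/2})\bigr),
\end{equation*}
with $R$ comparable to $2\|(u_0,A_0,A_1)\|_X$. Given the $n$th iterate $(u^n,A^n)$ with $\diver A^n=0$, define $u^{n+1}$ via the magnetic Duhamel formula supplied by Propositions \ref{prop:prop} and \ref{eq:Duhamellemma},
\begin{equation*}
u^{n+1}(t)=U_{A^n}(t,0)u_0 - i\int_0^t U_{A^n}(t,s)\bigl[\phi(u^n)u^n+|u^n|^{2(\gamma-1)}u^n\bigr](s)\,ds,
\end{equation*}
and $A^{n+1}$ as the unique solution of the linear wave equation $\Box A^{n+1}=\mathbb P J(u^n,A^n)$ with data $(A_0,A_1)$. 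The Leray projection together with the hypothesis on the initial data automatically preserves $\diver A^{n+1}=0$ and $\diver\partial_t A^{n+1}=0$.

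To show the iteration stabilizes $Y_T^R$, I would combine the propagator bound \eqref{eq:magschr_prop}, in which the factor $(1+\|A^n\|_{L^\infty_t H^1}^4)\exp(\|\partial_t A^n\|_{L^1_t L^3})$ is uniformly controlled inside $Y_T^R$, with two nonlinear estimates: the Hartree bound \eqref{eq:hartree_lip} for $\phi(u^n)u^n$, and a Moser-type estimate $\||u^n|^{2(\gamma-1)}u^n\|_{H^2}\lesssim\|u^n\|_{L^\infty}^{2(\gamma-1)}\|u^n\|_{H^2}$ for the power term. The restriction $\gamma>3/2$ enters precisely here: it makes the map $z\mapsto|z|^{2(\gamma-1)}z$ of class $C^2$ near the origin, so that a fractional chain rule built on Lemma \ref{lemma:leibniz} and the embedding $H^2\hookrightarrow L^\infty$ applies. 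The corresponding bound on $(A^{n+1},\partial_t A^{n+1})$ in $H^{3/2}\times H^{1/2}$ follows from the energy estimate \eqref{eq:enest} with the source bound \eqref{eq:pjestimate}. Choosing $T=T(R)$ small enough absorbs the time integrals and keeps the orbit inside $Y_T^R$.

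The delicate step is contraction. A direct $H^2$ comparison loses a derivative through the commutator $(\Delta_{A_2^n}-\Delta_{A_1^n})u_2^{n+1}$, so I would instead contract in the weaker norm on $L^\infty_t L^2_x\times(L^\infty_t H^{1/2}_x\cap W^{1,\infty}_t H^{-1/2}_x)$, which remains complete when restricted to the closed ball $Y_T^R$. Setting $w=u_1^{n+1}-u_2^{n+1}$, the Coulomb gauge reduces the commutator to
\begin{equation*}
\tfrac12(\Delta_{A_2^n}-\Delta_{A_1^n})u_2^{n+1}=-i(A_2^n-A_1^n)\cdot\nabla u_2^{n+1}-\tfrac{1}{2}(|A_2^n|^2-|A_1^n|^2)u_2^{n+1},
\end{equation*}
which is controlled in $L^2$ using $\|A_1^n-A_2^n\|_{H^{1/2}}\hookrightarrow L^3$, the uniform bound $\|u_2^{n+1}\|_{H^2}\le R$ and \eqref{eq:covar_L6}; the standard $L^2$ energy identity then closes the $w$ estimate. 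The Hartree and power differences are locally Lipschitz on $H^2$-bounded sets since $\gamma>3/2$, and the wave-equation difference is again controlled by \eqref{eq:enest} with source $\mathbb P(J(u_1^n,A_1^n)-J(u_2^n,A_2^n))$, Lipschitz in $(u,A)$ on bounded sets. Taking $T$ still smaller produces a strict contraction and hence a unique fixed point $(u,A)$ sitting in $Y_T^R$.

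Uniqueness in the whole class is then an $L^2$ energy estimate of the same type, and the blow-up alternative follows by restarting the local theory at times close to $T_{\max}$: if the three $X$-norms remained bounded up to $T_{\max}<\infty$, the local existence window could be relocated past $T_{\max}$, contradicting maximality. The principal obstacle throughout is precisely the $A$-dependence of the propagator $U_A$: one cannot hope to contract in the same high-regularity space in which the iteration is bounded, and the argument hinges on exploiting the uniform $H^2$-control of the iterates to absorb the derivative on $u_2^{n+1}$ appearing in the commutator when descending to the weaker contraction norm.
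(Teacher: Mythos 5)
Your proposal is correct and follows essentially the same strategy as the paper: a fixed point for the map defined through the magnetic propagator $U_A$ and the wave Duhamel formula, run on a ball of $C_tH^2_x\times\bigl(C_tH^{3/2}_x\cap C^1_tH^{1/2}_x\bigr)$ but contracted in a weaker metric, precisely because the $A$-dependence of $U_A$ loses a derivative at the $H^2$ level; the self-mapping bounds, the treatment of the commutator $(A_1-A_2)\cdot\nabla u_2$, and the blow-up alternative all match the paper's argument. The only real difference is the auxiliary metric on the vector potential: the paper contracts $A_1-A_2$ in $L^4_{t,x}$ via the wave Strichartz estimates of Lemma \ref{lemma:strichwave}, whereas you use the $L^\infty_tH^{1/2}_x$ energy norm together with the embedding $L^{3/2}(\mathbb{R}^3)\hookrightarrow H^{-1/2}(\mathbb{R}^3)$ -- a choice that also closes (the paper uses exactly these norms in the Appendix for the continuous-dependence estimates) and dispenses with Strichartz entirely.
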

\begin{proof}
First of all, let us define the space
\begin{equation}\begin{split}
X_T:=\{ &(u, A)\;\textrm{s.t.}\; u\in C([0,T],H^2(\mathbb{R}^3)), A \in C([0,T],H^{\frac{3}{2}}(\mathbb{R}^3))\cap C^1([0,T],H^{\frac{1}{2}}(\mathbb{R}^3)),\\
& \diver A = 0,\Vert u\Vert_{L^\infty_t H^2_x(\mathbb{R}^3)}\leq R_1,\,\Vert A\Vert_{L^\infty_t H^{\frac{3}{2}}(\mathbb{R}^3)}+\Vert\partial_{t} A\Vert_{L^\infty_t H_x^{\frac{1}{2}}(\mathbb{R}^3)}\leq R_2\}\,,
\end{split} \end{equation}
where $R_1, R_2, T>0$ will be chosen later. It is straightforward to see that $X_T$, endowed with the distance 
\begin{equation}\label{eq:distance}
d((u_1,A_1),(u_2,A_2))=\max\{\Vert u_1-u_2\Vert_{L^\infty_t L^2_x(\mathbb{R}^3)},\Vert A_1-A_2\Vert_{L^4_t L^4_x(\mathbb{R}^3)}\}\,,
\end{equation}
is a complete metric space. We also define
\begin{equation}\label{eq:XT}
\|(u, A)\|_{X_T}:=\|u\|_{L^\infty_tH^2_x([0, T]\times\R^3)}+\|A\|_{L^\infty_tH^{3/2}_x([0, T]\times\R^3)}+\|\d_tA\|_{L^\infty_tH^{1/2}_x([0, T]\times\R^3)}.
\end{equation}
Let $(u_0, A_0, A_1)\in X$, we define the map $\Phi:X_T\to X_T$, $(u, B)=\Phi(u, A)$, $(u, A)\in X_T$, where
\begin{equation}\label{eq:psi_duham}
u(t)=U_A(t,0)u_0-i\int_0^t U_A(t,s)(\phi u+\vert u\vert^{2(\gamma-1)}u)(s)ds
\end{equation}
and
\begin{align*}
B(t)=\cos (t\sqrt{-\Delta})A_0+\frac{\sin (t\sqrt{-\Delta})}{\sqrt{-\Delta}}A_1+\int_{0}^t\frac{\sin (t\sqrt{-\Delta})}{\sqrt{-\Delta}}\mathbb PJ(u,A)(s)ds
\end{align*}
Let us first show that $\Phi$ maps $X_T$ into itself. By \eqref{eq:magschr_prop} we have that for any $s\in[0, T]$,
\begin{equation*}
\|U_A(t, s)f\|_{H^2}\leq\|f\|_{H^2}\left(1+\|A\|_{L^\infty_tH^1_x}^4\right)e^{\|\d_tA\|_{L^1_tL^3_x}}
\end{equation*}
and since $\|\d_tA\|_{L^1_tL^3_x}\lesssim T\|\d_tA\|_{L^\infty_tH^{1/2}_x}$, we have
\begin{equation*}
\|U_A(t, s)f\|_{H^2}\leq C(1+R_2^4)e^{TR_2}\|f\|_{H^2}.
\end{equation*}
Let us consider the nonlinear terms in \eqref{eq:psi_duham}.
Since $\gamma>\frac{3}{2}$ the function $z\mapsto\vert z\vert^{2(\gamma-1)}z$ is $C^2(\mathbb{C};\mathbb{C})$, then by the Sobolev embedding $H^2\hookrightarrow L^\infty$ and by Lemma \ref{lemma:leibniz} we have
\begin{equation*}
\||u|^{2(\gamma-1)}u\|_{L^\infty_tH^2_x}\lesssim\|u\|_{L^\infty_tH^2_x}^{2\gamma-1}\lesssim R_1^{2\gamma-1}.
\end{equation*}
Furthermore, from \eqref{eq:hartree_lip} we have
\begin{equation*}
\|\phi u\|_{L^\infty_tH^2_x}\lesssim\|u\|_{L^\infty_tH^{3/4}_x}^2\|u\|_{L^\infty_tH^2_x}\lesssim R_1^3.
\end{equation*}
so that by putting everything together, we obtain
\begin{align*}\Vert u\Vert_{L^\infty_tH^2_x}&\leq C_1 (1+R_2^4)\exp(TR_2)\Big(\Vert u_0\Vert_{H^2}+TR_1^{2\gamma-1}+TR_1^3\Big).
\end{align*}
On the other hand, by using the Strichartz estimates for the wave equation stated in Lemma \ref{lemma:strichwave} we have
\begin{equation*}
\|B\|_{L^\infty_tH^{3/2}_x}+\|\d_tB\|_{L^\infty_tH^{1/2}_x}\lesssim(1+T)\left(\|A_0\|_{H^{3/2}}+\|A_1\|_{H^{1/2}}+\|\mathbb PJ\|_{L^1_tH^{1/2}_x}\right).
\end{equation*}
By \eqref{eq:pjestimate} we have
\begin{equation*}
\|\mathbb PJ\|_{L^\infty_tH^{1/2}_x}\lesssim R_1^2(1+R_1),
\end{equation*}
so that
\begin{equation*}
\|B\|_{L^\infty_tH^{3/2}_x}+\|\d_tB\|_{L^\infty_tH^{1/2}_x}\leq C_2(1+T)\left(\|A_0\|_{H^{3/2}}+\|A_1\|_{H^{1/2}}+TR_1^2(1+R_1)\right).
\end{equation*}
Let us now choose $R_1, R_2, T$; without loss of generality we can assume that $T<1$. Let
\begin{equation*}
\begin{aligned}
R_2 &:=4C_2\|A_0\|_{H^{3/2}}+\|A_1\|_{H^{1/2}}\\
R_1 &:=2C_1(1+R_2^4)e^{R_2}\|u_0\|_{H^2}\\
\end{aligned}
\end{equation*}
Then
\begin{equation*}
\begin{aligned}
\Vert u\Vert_{L^\infty_t H^2_x(\mathbb{R}^3)}&\leq \frac{R_1}{2}+C_1(1+R_2^4)Te^{R_2}R_1(R_1^{2(\gamma-1)}+R_1^2)\\
\Vert B\Vert_{L^\infty_tH^{\frac{3}{2}}_x(\mathbb{R}^3)}+\Vert\partial_{t} B\Vert_{L^\infty_tH^{\frac{1}{2}}_x(\mathbb{R}^3)}&\leq\frac{R_2}{2}+2C_2R_1^2(1+R_2)T
\end{aligned}
\end{equation*}
Now by choosing $T$ such that 
\begin{equation*}
\max\left\{C_1(1+R_2^4)e^{R_2}(R_1^{2(\gamma-1)}+R_1^2)T,\frac{2C_2R_1^2(1+R_2)}{R_2}T\right\}<\frac12,
\end{equation*}

we see that $\Phi$ maps $X_T$ into itself

We now prove that, possibly choosing a smaller value for $T>0$, the map $\Phi$ is indeed a contraction on $X_T$. Let us define
\begin{equation*}\begin{aligned}
(u_1,B_1) &=\Phi(u_1,A_1)\\ 
(u_2,B_2) &=\Phi(u_2,A_2)\,.
\end{aligned}\end{equation*}
By writing the difference of the equations for $u_1$ and $u_2$ we write
\begin{equation*}
(u_1-u_2)(t)=-i\int_0^tU_A(t, s)F(s)\,ds,
\end{equation*}
where $F$ is given by
\begin{equation}\begin{split}
F&=2i(A_1-A_2)\cdot\nabla u_2+(\vert A_1\vert^2-\vert A_2\vert^2)u_2+(\phi(|u_1|^2)-\phi(|u_2|^2))u_2\\&\quad+\phi(|u_1|^2)(u_1-u_2)+\left|\vert u_1\vert^{2(\gamma-1)}u_1-\vert u_2|^{2(\gamma-1)}u_2\right|=:\sum_{j=1}^5F_j.
\end{split}\end{equation}
Hence we have
\begin{equation}\label{eq:schdiff}
\|u_1-u_2\|_{L^\infty_tL^2_x}\lesssim\sum_j\|F_j\|_{L^1_tL^2_x}.
\end{equation}
We now estimate term by term; by using H\"older's inequality and Sobolev embedding we have
\begin{equation*}
\|F_1\|_{L^1_tL^2_x}\lesssim T^{3/4}\|\nabla u_2\|_{L^\infty_tH^1_x}\|A_1-A_2\|_{L^4_{t, x}},
\end{equation*}
\begin{equation*}
\|F_2\|_{L^1_tL^2_x}\lesssim T^{3/4}\|u_2\|_{L^\infty_{t, x}}\left(\|A_1\|_{L^\infty_tH^1_x}+\|A_2\|_{L^\infty_tH^1_x}\right)\|A_1-A_2\|_{L^4_{t, x}}
\end{equation*}
By using \eqref{eq:inverselaplacianestimate}, the third term is estimated by
\begin{equation*}
\|F_3\|_{L^1_tL^2_x}\lesssim T\left(\|u_1\|_{L^\infty_tH^1_x}+\|u_2\|_{L^\infty_tH^1_x}\right)\|u_2\|_{L^\infty_tH^1_x}\|u_1-u_2\|_{L^\infty_tL^2_x}.
\end{equation*}
For the term $F_4$ we use \eqref{eq:soggelemma} and Sobolev embedding to get
\begin{equation*}
\|F_4\|_{L^1_tL^2_x}\lesssim T\|u_1\|_{L^\infty_tH^2_x}^2\|u_1-u_2\|_{L^\infty_tL^2_x}.
\end{equation*}
The last term is estimated by
\begin{align*}
\Vert F_5\Vert_{L^2(\mathbb{R}^3)}&\lesssim(\Vert u_1\Vert_{L^\infty}^{2(\gamma-1)}+\Vert u_2\Vert_{L^\infty}^{2(\gamma-1)})\Vert\p\Vert_{L^2(\mathbb{R}^3)} \lesssim R_1^{2(\gamma-1)}\Vert u_1-u_2\Vert_{L^2(\mathbb{R}^3)}\,,\end{align*}
where we used the following inequality
\begin{displaymath}
\vert\vert u_1\vert^{2(\gamma-1)}u_1-\vert u_2\vert^{2(\gamma-2)}u_2\vert\lesssim (\vert u_1\vert^{2(\gamma-1)}+\vert u_2\vert^{2(\gamma-1)})\vert u_1-u_2\vert
\end{displaymath}
By putting everything together in \eqref{eq:schdiff}, and by using H\"older's inequality in time, we obtain
\begin{equation}\label{eq:contschr}
\|u_1-u_2\|_{L^\infty_tL^2_x}\lesssim(T^{3/4}+T)C(R_1, R_2)d((u_1,A_1),(u_2,A_2)).
\end{equation}
Analogously, for $B_1$, $B_2$ we write
\begin{equation}\label{eq:wavediff}
(B_1-B_2)(t)=\int_0^t\frac{\sin((t-s)\sqrt{-\Delta})}{\sqrt{-\Delta}}G(s)\,ds,
\end{equation}
where $G=\sum_{j=1}^3G_j$ is given by:
 $$G=\mathbb P\IM\{(\overline{u_1-u_2})(\nabla-iA_1)u_1-iu_1\overline{u_2}(A_1-A_2)-(u_1-u_2)(\nabla+iA_2)\overline{u_2}\}\,.$$
Here we have used the fact that $\mathbb P(\overline{u_2}\nabla(u_1-u_2))=-\mathbb P((u_1-u_2)\nabla\overline{u_2})$.
Using the Strichartz estimates in Lemma \ref{lemma:strichwave} with $q=r=\tilde q=\tilde r=4$, we get
\begin{equation}\label{eq:stricwave}
\Vert B_1-B_2\Vert_{L^4_{t,x}}\leq \Vert G\Vert_{L^{\frac{4}{3}}_{t,x}}
\end{equation}
We estimate the three terms in $G$.
The terms $G_1$ and $G_3$ are treated similarly, by Sobolev embedding and by using \eqref{eq:nablaminusa} we have
\begin{multline*}
\|G_1\|_{L^{4/3}_{t, x}}+\|G_3\|_{L^{4/3}_{t, x}}\lesssim \\T^{3/4}\left(1+\|A_1\|_{L^\infty_tH^1_x}+\|A_2\|_{L^\infty_tH^1_x}\right)
\left(\|u_1\|_{L^\infty_tH^2_x}+\|u_2\|_{L^\infty_tH^2_x}\right)\|u_1-u_2\|_{L^\infty_tL^2_x}.
\end{multline*}
By using H\"older's inequality, $G_2$ is bounded by
\begin{equation*}
\|G_2\|_{L^{4/3}_{t, x}}\lesssim T^{1/2}\|u_1\|_{L^\infty_tH^1_x}\|u_2\|_{L^\infty_tH^1_x}\|A_1-A_2\|_{L^4_{t, x}}.
\end{equation*}
Resuming, by estimating the terms in \eqref{eq:stricwave} we obtain
\begin{equation}\label{eq:contraction2}
\|A_1-A_2\|_{L^4_{t, x}}\lesssim (T^{1/2}+T^{3/4})C(R_1, R_2)d((u_1,A_1),(u_2,A_2)).
\end{equation}
By summing up \eqref{eq:contschr} and \eqref{eq:contraction2}, we finally get
\begin{equation*}
d((u_1,B_1),(u_2,B_2))\leq (T^{1/2}+T)C(R_1, R_2)d((u_1,A_1),(u_2,A_2)).
\end{equation*}
Thus, if $T>0$ is chosen sufficiently small, then $\Phi$ is a contraction. This proves that for any initial data $(u_0, A_0, A_1)\in X$, there exists a unique local solution $(u, A)$ to \eqref{eq:msc} such that $u\in C([0, T];H^2(\R^3))$, $A\in C([0, T];H^{3/2}(\R^3))\cap C^1([0, T]; H^{1/2}(\R^3))$. By a standard argument it is straightforward to show that it may be extended to a maximal solution $(u, A)$, with $u\in C([0, T_{max});H^2(\R^3))$, $A\in C([0, T_{max});H^{3/2}(\R^3))\cap C^1([0, T_{max}); H^{1/2}(\R^3))$ and that the blow-up alternative holds true, namely if $T_{max}<\infty$ then we have
\begin{equation*}
\lim_{t\to T_{max}^-}\left(\|u(t)\|_{H^2}+\|A(t)\|_{H^{3/2}}+\|\d_tA(t)\|_{H^{1/2}}\right)=\infty.
\end{equation*}
\end{proof}
Next Proposition states the continuous dependence of solution on the initial data. Its proof goes through a series of technical lemmas and it follows this strategy: first we prove the continuous dependence for more regular solutions, then by an approximation argument we prove the general result for solutions $(u, A)\in X$. This will finish the proof of Theorem \ref{th:1}. In the remaining part of the Section we state the Proposition and the Lemmas needed to prove the continuous dependence for regular solutions. Then we show how to extend it to arbitrary solutions $(u, A)\in X$. The proofs of Lemmas \ref{lemma:1}, \ref{lemma:2} and \ref{lemma:3} will be given in the Appendix.
\begin{proposition}[Continuous dependence on the initial data]\label{prop:cont_dep}
Let $0<T<T_{max}$, then the mapping $(u_0, A_0, A_1)\mapsto (u, A, \d_tA)$, where $(u, A)$ is the solution to \eqref{eq:msc} is continuous as a mapping from $X$ to $C([0, T];X)$.
\end{proposition}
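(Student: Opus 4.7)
The plan is a two-step Bona--Smith type argument, as indicated in the paragraph preceding the proposition. First I would prove continuous dependence when the initial data lie in a more regular subspace $Y \subset X$ (for instance $u_0 \in H^3$, $A_0 \in H^{5/2}$, $A_1 \in H^{3/2}$, with the divergence-free condition); then I would pass to arbitrary $X$-data by smoothing and a triangle inequality.

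For the regular step, I take two sets of $Y$-data giving solutions $(u, A), (u', A')$, which by Proposition \ref{prop:existence} live on a common interval $[0, T]$ with uniform $X_T$-bounds. The difference $w = u - u'$ satisfies $i\d_t w = -\tfrac12 \Delta_A w + F$, where the source $F$ collects the commutator $-\tfrac12(\Delta_A - \Delta_{A'})u'$, the Hartree differences $(\phi(|u|^2) - \phi(|u'|^2))u' + \phi(|u'|^2)w$, and the power-nonlinearity difference $|u|^{2(\gamma-1)}u - |u'|^{2(\gamma-1)}u'$. Applying the Duhamel representation of Proposition \ref{eq:Duhamellemma} together with the $H^2 \to H^2$ bound \eqref{eq:magschr_prop} reduces the task to controlling $F$ in $L^1_t H^2_x$. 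In Coulomb gauge
\begin{equation*}
(\Delta_A - \Delta_{A'})u' = -2i(A-A')\cdot\nabla u' - (|A|^2 - |A'|^2)u',
\end{equation*}
so this term costs two derivatives on $u'$ and one on $A - A'$, absorbed by the $Y$-regularity of $u'$ combined with Lemma \ref{lemma:leibniz}. The Hartree differences are handled via \eqref{eq:hartree_lip} and \eqref{eq:inverselaplacianestimate}; the power-nonlinearity difference is treated as in the contraction argument of Proposition \ref{prop:existence}, using that $z \mapsto |z|^{2(\gamma-1)}z$ is $C^2$ for $\gamma > 3/2$. In parallel, $A - A'$ solves a linear wave equation with source $\mathbb{P}(J(u,A) - J(u',A'))$, estimated at the $H^{3/2} \times H^{1/2}$ level via Lemma \ref{lemma:wave} and the difference analogue of \eqref{eq:pjestimate}. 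A Gronwall argument on a small interval, iterated across any compact subinterval of $[0, T_{max})$, yields continuity of the flow in the $X$-topology for $Y$-data.

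For the approximation step, let $(u_0^n, A_0^n, A_1^n) \to (u_0, A_0, A_1)$ in $X$ and let $J^\eps$ denote a smoothing operator mapping $X$ into $Y$ (e.g.\ a Friedrichs mollifier, or the Yosida regularization used later in Section \ref{sect:g_sol}), with $J^\eps \xi \to \xi$ in $X$ as $\eps \to 0$. Denote by $(u^{n,\eps}, A^{n,\eps})$ and $(u^\eps, A^\eps)$ the solutions with smoothed data; by the uniform $X$-bounds on the data and the blow-up alternative, all these solutions exist on a common interval $[0,T]$. Splitting
\begin{equation*}
\|(u^n, A^n) - (u, A)\|_{C([0,T];X)} \leq \mathrm{I}^n_\eps + \mathrm{II}^{n,\eps} + \mathrm{III}^\eps,
\end{equation*}
with the obvious identifications, $\mathrm{II}^{n,\eps} \to 0$ as $n \to \infty$ by Step 1 applied to the regularized data $J^\eps (u_0^n, A_0^n, A_1^n)$ and $J^\eps(u_0, A_0, A_1)$, while $\mathrm{I}^n_\eps$ and $\mathrm{III}^\eps$ are made small uniformly in $n$ for $\eps$ small by the standard Bona--Smith estimate, combining the weak-distance convergence $u^{n,\eps} \to u^n$, $u^\eps \to u$ (from the contraction in Proposition \ref{prop:existence}) with the uniform $X_T$-bounds and the quantitative $Y$-norm control of $J^\eps$. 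Choosing $\eps$ small first, then $n$ large, closes the argument.

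The main obstacle is Step 1: the commutator $(\Delta_A - \Delta_{A'})u'$ requires control of $u'$ in a space strictly stronger than $H^2$, so the $X$-difference estimate cannot close with mere $X$-data. This is exactly what forces the Bona--Smith detour, and it also dictates the features required of the smoothing operator in Step 2---a quantitative $Y$-bound together with an $X$-approximation rate on $J^\eps \xi - \xi$, precisely what Friedrichs mollifiers or Yosida regularizations provide.
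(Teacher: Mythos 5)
Your two-step architecture (continuous dependence for regularized data, then mollification and a triangle inequality) is exactly the paper's strategy, and your identification of the commutator $(\Delta_A-\Delta_{A'})u'$ as the term forcing the detour through regular data is correct. However, there is a genuine gap in how Step 1 feeds into Step 2. For the terms $\mathrm{I}^n_\eps$ and $\mathrm{III}^\eps$ to be small \emph{uniformly in $n$} as $\eps\to0$, it is not enough to know that the flow is continuous in the $X$-topology for $Y$-data with a constant depending on the $Y$-norms: the $Y$-norms of the mollified data blow up as $\eps\to 0$, so a Gronwall bound of the form $\|(u,A)-(u',A')\|_{X_T}\leq e^{C(\|u'\|_{Y_T})T}\|\text{data diff}\|_X$ is useless there. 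What is actually needed --- and what the paper's Lemmas \ref{lemma:1}, \ref{lemma:2}, \ref{lemma:3} are engineered to produce, see \eqref{eq:contdep} --- is the product structure
\begin{equation*}
\|(u,A)-(u',A')\|_{X_T}\lesssim_{R,E}\|\text{data diff}\|_X
+T\bigl(\|u_0'\|_{H^4}+\|A_0'\|_{H^{5/2}}+\|A_1'\|_{H^{3/2}}\bigr)\,\|\text{data diff}\|_{L^2\times H^{1/2}\times H^{-1/2}},
\end{equation*}
in which the higher norms of \emph{one} solution multiply only the \emph{weak}-norm difference of the data, while the constant in front of the $X$-difference depends only on $R$ and $E$. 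This is what allows the asymmetric mollification scales ($\delta$ for $u_0$, $\delta^2$ for $A_0,A_1$) to balance $O(\delta^2)$ against $o(\delta^{-2})$ in \eqref{eq:dens}. Your plan, as written (``a Gronwall argument \dots yields continuity of the flow in the $X$-topology for $Y$-data''), does not distinguish between these two structures, and the naive one does not close Step 2.

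The paper obtains the correct structure not by a direct $H^2$ Duhamel estimate on $u-u'$ but by Kato's time-derivative trick: it estimates $\d_t(u-u')$ in $L^\infty_tL^2_x$ (Lemma \ref{lemma:12}, where the high-regularity object $\d_tu'$ is paired with differences measured only in $L^2\times H^{1/2}$), converts back to $\|u-u'\|_{H^2}$ through the equation (Lemma \ref{lemma:11}), and quantifies the required extra regularity as $\|\d_tu'\|_{L^\infty_tH^2}\lesssim\|u_0'\|_{H^4}+\|A_0'\|_{H^{5/2}}+\|A_1'\|_{H^{3/2}}$ (Lemma \ref{lemma:3}); this last lemma also supplies the persistence of regularity that you invoke implicitly but do not prove. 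Your direct route can in principle be repaired --- distributing derivatives in $\|(A-A')\cdot\nabla u'\|_{H^2}$ via Lemma \ref{lemma:leibniz} so that $A-A'$ is measured in $H^{1/2}$ forces essentially all of the derivatives onto $u'$, which shows both that the product structure is attainable and that your proposed class $u_0\in H^3$ is too weak: one needs roughly $u'\in H^4$, as in the paper. As it stands, though, the key quantitative estimate of the regular step is asserted rather than derived, and the stated regularity would not suffice to derive it.
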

For the lemmas we consider two different solutions $(u,A)$, $(u',A')$ defined from two sets of initial data $(u_0, A_0, A_1), (u_0', A_0', A_1')\in X$ such that
\begin{equation*}
\|(u_0, A_0, A_1)\|_X, \|(u_0', A_0', A_1')\|_X\leq R.
\end{equation*}
Moreover, we are also going to exploit the uniform bounds given by the total energy of system \eqref{eq:msc},
\begin{equation}\label{eq:en}
E(t)=\int\frac12|(\nabla-iA)u|^2+\frac12|\d_tA|^2+\frac12|\nabla A|^2+\frac12|\nabla\phi|^2+\frac{1}{\gamma}|u|^{2\gamma}\,dx.
\end{equation}
It is straightforward to see that it is conserved along the flow of solutions to \eqref{eq:msc}, thus if $(u, A)$, resp. $(u', A')$, is the solutions emanated from $(u_0, A_0, A_1)$, resp. $(u_0', A_0', A_1')$, then we may consider $E>0$ such that
\begin{equation*}
E(t), E'(t)\leq E,
\end{equation*}
where $E(t)$, resp. $E'(t)$, is the total energy associated to $(u, A)$, resp. $(u', A')$.
\begin{lemma}\label{lemma:1}
Let $(u, A), (u', A')$ be solutions to \eqref{eq:msc} defined as above, then we have
\begin{equation*}
\begin{aligned}
\|u-u'\|_{L^\infty_tH^2_x}\lesssim&\|\d_t(u-u')(0)\|_{L^2}+T\|\d_tu'\|_{L^\infty_tH^2_x}\left(\|A-A'\|_{L^\infty_tH^{1/2}_x}+\|u-u'\|_{L^\infty_tL^2_x}\right)\\
&+T\|(u, A)-(u', A')\|_{X_T},
\end{aligned}
\end{equation*}
where the constant depends only on $R, E$ defined as above.
\end{lemma}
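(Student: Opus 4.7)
The plan is to convert the $H^2$-estimate on $w:=u-u'$ into an $L^2$-estimate on $\partial_t w$ by exploiting the Schr\"odinger equation itself, and then to control $\partial_t w$ in $L^2$ through an energy estimate on the time-differentiated equation. Roughly, since the equation reads $\Delta_A u\approx -2i\partial_t u+\text{lower order}$, the $H^2$ control of $w$ is equivalent to the $L^2$ control of $\partial_t w$, up to nonlinear commutators.

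First, I would apply \eqref{eq:H2tomagLap} to $w$ with the (time-dependent) magnetic potential $A$, obtaining $\|w\|_{H^2}\lesssim\|\Delta_A w\|_{L^2}+C(R)\|w\|_{L^2}$. Using the first equation of \eqref{eq:msc} and its analogue for $(u',A')$, one rewrites
\[
\Delta_A w=-2i\,\partial_t w+2(\phi u-\phi' u')+2(|u|^{2(\gamma-1)}u-|u'|^{2(\gamma-1)}u')-(\Delta_A-\Delta_{A'})u'.
\]
The nonlinear differences are estimated in $L^2$ by Lemma \ref{lemma:leibniz} together with \eqref{eq:inverselaplacianestimate}--\eqref{eq:soggelemma} and the embedding $H^2\hookrightarrow L^\infty$, while the commutator $(\Delta_A-\Delta_{A'})u'=-2i(A-A')\cdot\nabla u'-(A-A')\cdot(A+A')u'$ is controlled in $L^2$ by $C(R)\|A-A'\|_{H^{1/2}}$ via the Sobolev embedding $H^{1/2}(\mathbb R^3)\hookrightarrow L^3(\mathbb R^3)$.

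Next, differentiating the first equation of \eqref{eq:msc} in time in the Coulomb gauge, the identity $[\partial_t,\Delta_A]u=-2i\,\partial_t A\cdot\nabla u-2A\cdot\partial_t A\,u$ shows that $\partial_t u$ satisfies a magnetic Schr\"odinger equation, and subtracting the analogous equation for $\partial_t u'$ shows that $\zeta:=\partial_t w$ solves
\[
i\partial_t\zeta=-\tfrac12\Delta_A\zeta-\tfrac12(\Delta_A-\Delta_{A'})\partial_t u'+\mathcal G,
\]
where $\mathcal G$ bundles the differences of $i\partial_t A\cdot\nabla u$, $A\cdot\partial_t A\,u$ and $\partial_t F$, with $F:=\phi u+|u|^{2(\gamma-1)}u$. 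Since $\Delta_A$ is symmetric, the standard $L^2$ energy identity yields
\[
\|\zeta(t)\|_{L^2}\le\|\zeta(0)\|_{L^2}+\int_0^t\bigl(\tfrac12\|(\Delta_A-\Delta_{A'})\partial_t u'\|_{L^2}+\|\mathcal G\|_{L^2}\bigr)\,ds.
\]
The commutator term gives exactly $C(R)\|A-A'\|_{L^\infty_tH^{1/2}_x}\|\partial_t u'\|_{L^\infty_tH^2_x}$ by the same argument as in the previous step but applied to $\partial_t u'\in H^2$ in place of $u'$. Every piece of $\mathcal G$ is then reorganized via $XY-X'Y'=X(Y-Y')+(X-X')Y'$: the terms linear in $A-A'$, $\partial_t A-\partial_t A'$ or $u-u'$ in $H^2$ are each bounded by $C(R)\|(u,A)-(u',A')\|_{X_T}$, while the most delicate piece $(\phi-\phi')\partial_t u'$ arising from $\partial_t F-\partial_t F'$ is estimated by $\|\phi-\phi'\|_{L^\infty}\|\partial_t u'\|_{L^2}\lesssim C(R)\|u-u'\|_{L^\infty_tL^2_x}\|\partial_t u'\|_{L^\infty_tH^2_x}$ through \eqref{eq:soggelemma}.

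Combining the two steps, taking the supremum in $t\in[0,T]$ and using $\int_0^t\|\cdot\|_{L^\infty_tL^2_x}\,ds\le T\|\cdot\|_{L^\infty_tL^2_x}$, I would arrive at the stated bound, with constants depending only on $R$ and $E$ through the a priori bounds on the two solutions. The hardest point is the careful treatment of $\partial_t F-\partial_t F'$: because the Hartree potential is nonlocal, $\partial_t\phi=2(-\Delta)^{-1}\RE(\bar u\,\partial_t u)$ reintroduces $\partial_t u$ into the source, and the decomposition must be arranged so that $\partial_t w$ itself never appears on the right-hand side (which would otherwise require a separate Gronwall step) and so that each surviving factor lands either against the $\|\partial_t u'\|_{L^\infty_tH^2_x}$ prefactor of the second summand or inside the $X_T$-norm of the differences.
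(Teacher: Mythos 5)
Your proposal follows essentially the same route as the paper's own proof (Lemmas \ref{lemma:11} and \ref{lemma:12} in the Appendix): first trading the $H^2$ norm of $u-u'$ for the $L^2$ norm of $\partial_t(u-u')$ through the equation itself, then running an $L^2$ energy/Duhamel estimate on the time-differentiated equation, with the same identification of $(\phi-\phi')\partial_t u'$ and $(\Delta_A-\Delta_{A'})\partial_t u'$ as the terms carrying the $\Vert\partial_t u'\Vert_{H^2}$ prefactor. The only caveat is that pieces such as $\phi\,\partial_t(u-u')$ and $|u'|^{2(\gamma-1)}\partial_t(u-u')$ unavoidably reintroduce $\partial_t(u-u')$ on the right-hand side, so it cannot be arranged to ``never appear''; after time integration these carry a factor $T$ and are disposed of exactly as in the paper via the reverse inequality \eqref{eq:reverse} (equivalently, absorption for small $T$), which is what your plan implicitly requires.
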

\begin{lemma}\label{lemma:2}
Let $(u, A), (u', A')$ be solutions to \eqref{eq:msc} defined as above, then we have
\begin{equation*}
\|u-u'\|_{L^\infty_tL^2_x}+\|A-A'\|_{L^\infty_tH^{1/2}_x}\lesssim\|(u_0, A_0, A_1)-(u_0', A_0', A_1')\|_{L^2\times H^{1/2}\times H^{-1/2}},
\end{equation*}
where the constant depends only on $R, E$ defined as above.
\end{lemma}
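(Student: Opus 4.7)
The plan is to derive coupled energy estimates for $w := u - u'$ in $L^2$ and $B := A - A'$ in $H^{1/2}$, and then close the system via an integral Gronwall argument. The uniform a priori bounds given by $R$ and the conserved energy $E$ supply every higher-norm factor that appears as a coefficient on the right-hand side.

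First I form the equation for $w$: grouping differences,
\begin{equation*}
i\partial_t w = -\tfrac12 \Delta_A w + \phi w + |u|^{2(\gamma-1)} w - \tfrac12(\Delta_A - \Delta_{A'})u' + (\phi-\phi')u' + (|u|^{2(\gamma-1)} - |u'|^{2(\gamma-1)})u'.
\end{equation*}
Testing against $\bar w$, integrating in $x$, and taking imaginary parts, the self-adjoint operators---the magnetic Laplacian in the Coulomb gauge, and the real multipliers $\phi$ and $|u|^{2(\gamma-1)}$---drop out, leaving only the three remainder terms. Using $\diver A = \diver A' = 0$, the first of these expands as $B\cdot\nabla u'$ and $B\cdot(A+A')u'$ pieces, bounded via $H^{1/2}\hookrightarrow L^3$, $\|\nabla u'\|_{L^6}\lesssim\|u'\|_{H^2}$, and Sobolev embedding by $\lesssim\|B\|_{H^{1/2}}\|w\|_{L^2}$. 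The Hartree remainder is handled by writing $\phi-\phi' = (-\Delta)^{-1}(\bar u w + u'\bar w)$ and invoking \eqref{eq:inverselaplacianestimate} to get $\lesssim\|w\|_{L^2}^2$. The power-nonlinearity remainder uses the pointwise inequality $\bigl||u|^{2(\gamma-1)}-|u'|^{2(\gamma-1)}\bigr|\lesssim(|u|^{2\gamma-3}+|u'|^{2\gamma-3})|w|$ (valid since $\gamma>\tfrac32$) and the $L^\infty$ control from $H^2$ to yield $\lesssim\|w\|_{L^2}^2$. Young's inequality then produces
\begin{equation*}
\tfrac{d}{dt}\|w\|_{L^2}^2 \leq C_1\bigl(\|w\|_{L^2}^2 + \|B\|_{H^{1/2}}^2\bigr).
\end{equation*}

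For $B$, the wave equation $\Box B = \mathbb P[J(u,A) - J(u',A')]$ is attacked with the energy identity at the $H^{1/2}\times H^{-1/2}$ level. Writing
\begin{equation*}
J(u,A) - J(u',A') = \IM(\bar w \nabla u + \bar{u'}\nabla w) - B|u|^2 - A'(w\bar u + u'\bar w),
\end{equation*}
the dangerous factor $\bar{u'}\nabla w$, which would naively demand an $H^1$ norm of $w$, is split as $\nabla(\bar{u'}w) - (\nabla\bar{u'})w$; the pure-gradient piece is annihilated by $\mathbb P$. All surviving factors lie in $L^{3/2}$ with bounds $\lesssim C_2(\|w\|_{L^2}+\|B\|_{H^{1/2}})$ (constants depending only on $R,E$). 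Since $L^{3/2}\hookrightarrow H^{-1/2}$ in $\R^3$ and $\mathbb P$ is bounded on $L^{3/2}$, the wave energy estimate yields
\begin{equation*}
\tfrac{d}{dt}\bigl(\|B\|_{H^{1/2}}^2 + \|\partial_t B\|_{H^{-1/2}}^2\bigr) \leq C_3\bigl(\|B\|_{H^{1/2}}^2 + \|\partial_t B\|_{H^{-1/2}}^2 + \|w\|_{L^2}^2\bigr).
\end{equation*}

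Adding the two differential inequalities and applying Gronwall to the combined quantity $\Phi(t) := \|w(t)\|_{L^2}^2 + \|B(t)\|_{H^{1/2}}^2 + \|\partial_t B(t)\|_{H^{-1/2}}^2$ gives $\Phi(t)\lesssim\Phi(0)e^{Ct}$, and taking square roots recovers the stated estimate, since $\Phi(0)\lesssim\|(u_0,A_0,A_1)-(u_0',A_0',A_1')\|_{L^2\times H^{1/2}\times H^{-1/2}}^2$. The main obstacle is the analysis of $\bar{u'}\nabla w$ in the current difference: at the $L^2$ regularity of $w$, a full derivative on $w$ is genuinely out of reach, and the entire argument closes only because that singular piece is precisely a pure gradient killed by the Leray projector---without this structural cancellation one cannot couple the Schr\"odinger and wave energies at the $L^2\times H^{1/2}$ level.
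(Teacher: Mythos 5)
Your proof is correct and follows essentially the same route as the paper's: the identical key cancellation $\mathbb P(\bar u'\nabla w)=-\mathbb P(w\nabla\bar u')$ to avoid a derivative on $w$, the same embedding $L^{3/2}\hookrightarrow H^{-1/2}$ for the current difference, and the same term-by-term bounds via \eqref{eq:inverselaplacianestimate} and the a priori $R,E$ controls. The only (cosmetic) difference is that you close with a differential Gronwall inequality where the paper uses Duhamel-type integral estimates and absorbs by taking $T$ small.
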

\begin{lemma}\label{lemma:3}
We have
\begin{equation*}
\|\d_tu\|_{L^\infty_tH^2_x}\lesssim\|u_0\|_{H^4}+\|A_0\|_{H^{5/2}}+\|A_1\|_{H^{3/2}},
\end{equation*}
where the constant depends only on $T, R, E$.
\end{lemma}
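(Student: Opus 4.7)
The plan is to convert the time derivative into spatial derivatives using the Schr\"odinger equation itself, and then rely on a higher-regularity persistence statement for the pair $(u, A)$. Since
$$\partial_t u = i\bigl(\tfrac{1}{2}\Delta_A u - \phi u - |u|^{2(\gamma-1)}u\bigr),$$
one has the pointwise-in-time bound
$$\|\partial_t u\|_{H^2} \lesssim \|\Delta_A u\|_{H^2} + \|\phi u\|_{H^2} + \||u|^{2(\gamma-1)}u\|_{H^2}.$$
The Hartree and power terms are polynomial in $\|u\|_{H^2}$ via \eqref{eq:hartree_lip}, Lemma \ref{lemma:leibniz}, and the embedding $H^2 \hookrightarrow L^\infty$, hence bounded by a constant depending only on $R$ and $E$. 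The magnetic kinetic term, expanded in Coulomb gauge as $\Delta_A u = \Delta u - 2iA\cdot\nabla u - |A|^2 u$, is controlled by Lemma \ref{lemma:leibniz} with a right-hand side that is linear in $\|u\|_{H^4}$ and $\|A\|_{H^{5/2}}$ modulo coefficients polynomial in the lower-order norms. Thus the task reduces to propagating the higher-order bound
$$\|u(t)\|_{H^4} + \|A(t)\|_{H^{5/2}} + \|\partial_t A(t)\|_{H^{3/2}} \lesssim C(T, R, E)\bigl(\|u_0\|_{H^4} + \|A_0\|_{H^{5/2}} + \|A_1\|_{H^{3/2}}\bigr)$$
on $[0, T]$.

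For this propagation step I would run a coupled higher-order estimate. On the Schr\"odinger side, I would apply the propagator bound \eqref{eq:magschr_prop} to $(1 - \Delta)u$, which solves a magnetic Schr\"odinger equation whose forcing is the commutator of $(1 - \Delta)$ with the magnetic term and the two nonlinearities; estimating the forcing by Lemma \ref{lemma:leibniz} yields a time integral dominated by $\|A\|_{H^{5/2}}\|u\|_{H^4}$ modulo contributions bounded by $C(R, E)$. On the wave side, I would apply Lemma \ref{lemma:wave} at the regularity level $s = 5/2$ to $\Box A = \mathbb P J(u, A)$, estimating the source in $H^{3/2}$ by the same Kato-Ponce strategy as in \eqref{eq:pjestimate}; the resulting bound is again linear in $\|u\|_{H^4}$ and $\|A\|_{H^{5/2}}$ with $R$-controlled coefficients. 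Coupling the two inequalities produces a linear Gronwall system for $\|u\|_{H^4} + \|A\|_{H^{5/2}} + \|\partial_t A\|_{H^{3/2}}$ with initial datum $\|u_0\|_{H^4} + \|A_0\|_{H^{5/2}} + \|A_1\|_{H^{3/2}}$ and coefficients depending only on $T$, $R$, and $E$. Closing it via Gronwall delivers the propagation bound, and substituting into the identity for $\partial_t u$ above concludes the proof.

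The main obstacle is precisely the coupling in the Gronwall step: both the Schr\"odinger and the wave estimates return forcing terms scaling like $\|A\|_{H^{5/2}}\|u\|_{H^4}$, so to avoid a superlinear blow-up one has to peel off the lower-order factors using the a priori $L^\infty_t H^2_x$-bound on $u$ and the $L^\infty_t H^{3/2}_x$-bound on $A$ (both encoded in $R$), ensuring that the surviving dependence on the higher-order norms is genuinely linear and that the Gronwall system closes on $[0, T]$ with constants depending only on $T$, $R$, and $E$.
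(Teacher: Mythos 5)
Your reduction of $\|\partial_t u\|_{H^2}$ to the propagation of $\|u\|_{H^4}+\|A\|_{H^{5/2}}+\|\partial_t A\|_{H^{3/2}}$ is fine, but the propagation step as you describe it does not close, for two concrete reasons. First, on the Schr\"odinger side the commutator of $(1-\Delta)$ with the magnetic term contains $\Delta A\cdot\nabla u$ (and $\nabla A:\nabla^2u$), and you claim this forcing is bounded in $H^2$ by $C(R)\|A\|_{H^{5/2}}\|u\|_{H^4}$. It is not: for $A\in H^{5/2}$ one only has $\Delta A\in H^{1/2}$, and multiplying by a smooth non-vanishing $\nabla u$ leaves the product at $H^{1/2}$ regularity, so $\Delta A\cdot\nabla u\notin H^2$ in general. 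In Lemma \ref{lemma:leibniz} the first term always carries $\Lambda^{s+\alpha}$ with $\alpha\geq0$ on one factor, i.e.\ four or more derivatives of $A$ in some $L^{p_1}$ with $p_1>1$, which $H^{5/2}(\R^3)$ never controls. Second, feeding $(1-\Delta)u$ into the Duhamel formula also requires $(1-\Delta)\big(|u|^{2(\gamma-1)}u\big)\in L^1_tH^2_x$, i.e.\ four spatial derivatives of the power nonlinearity; for $\gamma>\frac32$ close to $\frac32$ the map $z\mapsto|z|^{2(\gamma-1)}z$ is only $C^2$, so this term is not even defined. The same two obstructions defeat the alternative of a direct energy estimate on $\Delta_A^2u$.

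The paper circumvents exactly this by trading spatial for temporal derivatives. One differentiates the equation twice in time: $\partial_t^2u$ solves a magnetic Schr\"odinger equation whose forcing $G$ is estimated in $L^2$, where $U_A$ is unitary; there the dangerous coefficient is $\partial_t^2A=\Delta A+\mathbb PJ\in H^{1/2}\hookrightarrow L^3$, paired with $(\nabla-iA)u\in L^6$ --- exactly compatible with $A\in H^{5/2}$ --- and the nonlinearity only ever receives two (time) derivatives, for which $C^2$ regularity suffices. The spatial bounds $\|\partial_tu\|_{H^2}\lesssim\|\partial_t^2u\|_{L^2}+C(E,R)$ and $\|u\|_{H^4}\lesssim C(E,R)\big(\|\partial_tu\|_{H^2}+\|A\|_{H^{5/2}}\big)$ are then recovered a posteriori by elliptic estimates from the once- and zero-times-differentiated equations, and the linear absorption/Gronwall argument you envisage is run on the quantity $\|\partial_t^2u\|_{L^2}+\|A\|_{H^{5/2}}+\|\partial_tA\|_{H^{3/2}}$ rather than on $\|u\|_{H^4}+\|A\|_{H^{5/2}}$. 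Substituting $\partial_t^2u\in L^2$ for $u\in H^4$ as the quantity actually propagated is the missing idea needed to make your scheme work.
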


\emph{Proof of Proposition \ref{prop:cont_dep}}.
By combining the above Lemmas, it is possible to show the continuous dependence for solutions whose initial data are $(u_0, A_0, A_1)\in H^4\times H^{5/2}\times H^{3/2}$. Indeed Lemmas \ref{lemma:1}, \ref{lemma:2} and \ref{lemma:3} imply the following estimate
\begin{equation*}
\begin{aligned}
\|&(u, A)-(u', A')\|_{X_T}\lesssim\|(u_0, A_0, A_1)-(u_0', A_0', A_1')\|_X\\&
+T\left(\|u_0'\|_{H^4}+\|A_0'\|_{H^{5/2}}+\|A_1'\|_{H^{3/2}}\right)\|(u_0, A_0, A_1)-(u_0', A_0', A_1')\|_{L^2\times H^{1/2}\times H^{-1/2}}\\
&+T\|(u, A)-(u', A')\|_{X_T}.
\end{aligned}
\end{equation*}
A straightforward bootstrap argument yields to
\begin{equation}\label{eq:contdep}
\begin{aligned}
\|&(u, A)-(u', A')\|_{X_T}\lesssim\|(u_0, A_0, A_1)-(u_0', A_0', A_1')\|_X\\&
+T\left(\|u_0'\|_{H^4}+\|A_0'\|_{H^{5/2}}+\|A_1'\|_{H^{3/2}}\right)\|(u_0, A_0, A_1)-(u_0', A_0', A_1')\|_{L^2\times H^{1/2}\times H^{-1/2}}.
\end{aligned}
\end{equation}
Let us now consider general initial data $(u_0, A_0, A_1)\in X$ and let us consider a mollified $\eta^\delta(x)=\delta^{-3}\eta(x/\delta)$, $\delta>0$, where $\eta\in C^\infty_c(\R^3)$ is a smooth, radial function with $\int\eta=1$. We define 
$u_0^\delta=\eta^\delta\ast u_0$, $A_0^{\delta^2}=\eta^{\delta^2}\ast A_0$, $A_1^{\delta^2}=\eta^{\delta^2}\ast A_1$. 
It is straightforward to check that this definition implies
\begin{equation*}
\|u_0\|_{H^4}+\|A_0^{\delta^2}\|_{H^{5/2}}+\|A_1^{\delta^2}\|_{H^{3/2}}\lesssim\delta^{-2}\left(\|u_0\|_{H^2}+\|A_0\|_{H^{3/2}}+\|A_1\|_{H^{1/2}}\right)\\
\end{equation*}
and that
\begin{equation*}
\|u_0-u^\delta\|_{L^2}+\|A_0-A_0^{\delta^2}\|_{H^{1/2}}+\|A_1-A_1^{\delta^2}\|_{H^{-1/2}}=0(\delta^2).
\end{equation*}
By using \eqref{eq:contdep} above we then infer
\begin{equation}\label{eq:dens}
\begin{aligned}
\|&(u, A)-(u^\delta, A^{\delta^2})\|_{X_T}\lesssim\|(u_0, A_0, A_1)-(u_0^\delta, A_0^{\delta^2}, A_1^{\delta^2})\|_{X}\\
&+T\left(\|u_0^\delta\|_{H^4}+\|A_0^{\delta^2}\|_{H^{5/2}}+\|A_1^{\delta^2}\|_{H^{3/2}}\right)\|(u_0, A_0, A_1)-(u_0^\delta, A_0^{\delta^2}, A_1^{\delta^2})\|_{L^2\times H^{1/2}\times H^{-1/2}}\\
\lesssim&\|(u_0, A_0, A_1)-(u_0^\delta, A_0^{\delta^2}, A_1^{\delta^2})\|_{X}+TO(\delta^2)o(\delta^{-2}).
\end{aligned}
\end{equation}
Consequently we have that $(u^\delta, A^{\delta^2})$ converges to $(u, A)$ in $X_T$ as $\delta\to0$. Let now $\{(u_{0, n}, A_{0, n}, A_{1, n}\}\subset X$ be a sequence converging to $(u_0, A_0, A_1)\in X$. We want to prove that the solutions $(u_n, A_n)$ emanated from $(u_{0, n}, A_{0, n}, A_{1, n})$ converge to $(u, A)$ in $X_T$. To do this, we regularize the initial data by considering $(u_{0, n}^\delta, A_{0, n}^{\delta^2}, A_{1, n}^{\delta^2})$. From \eqref{eq:dens} we know that $\{(u_n^\delta, A_n^{\delta^2})\}$ converges to $(u_, A_n)$ in $X_T$, as $\delta\to0$, where $(u_n, A_n)$ is the solution to \eqref{eq:msc} with initial data $(u_{0, n}, A_{0, n}, A_{1, n})$. On the other hand, 
$\{(u_{0, n}^\delta, A_{0, n}^{\delta^2}, A_{1, n}^{\delta^2})\}$ generate regular solutions, so that by \eqref{eq:contdep} we have that $\{(u_n^\delta, A_n^{\delta^2})\}$ converges to $(u^\delta, A^{\delta^2})$ in $X_T$, for $n\to\infty$. The triangular inequality then yields the convergence of $(u_n, A_n)$ to $(u, A)$ in $X_T$.
\hfill $\square$\vspace{1cm}\newline

\section{Global existence}\label{sect:g_sol}
In the previous Section we proved the local well-posedness of \eqref{eq:msc} in $H^2\times H^{3/2}$. 
However, the presence of the power-type nonlinearity in \eqref{eq:msc} prevents us to obtain a global bound for $\Vert (u(t),A(t),\partial_{t} A(t))\Vert_X$. This is different, for example, from what can be proven in [22]. Indeed, while in the case of Hartree nonlinearity it is possible to use \eqref{eq:hartree_lip} which is linear in the higher order norm, in the case of the power-type nonlinearity one has
\begin{displaymath}
\Vert\vert u\vert^{2(\gamma-1)}u\Vert_{H^2(\mathbb{R}^3)}\lesssim\Vert u\Vert_{L^\infty(\mathbb{R}^3)}^{2(\gamma-1)}\Vert u\Vert_{H^2(\mathbb{R}^3)}\,,
\end{displaymath}
which requires to bound $u$ in $H^s(\mathbb{R}^3)$, with $s>\frac{3}{2}$. Therefore it follows that the related Gronwall type inequality becomes superlinear in the higher order norm, hence it blows up in finite time.

Our strategy to investigate global in time existence will be based on the regularization of the nonlinear terms, provided by the classical Yosida approximations of the identity. We then consider the following approximating system
\begin{equation}\label{eq:appr_ms}
\left\{\begin{aligned} iu_t^\eps=&-\frac12\Delta_{\Ae} u^\eps+\phi^\eps u^\eps+N^\eps(u^\eps)\\
\Box A^\eps=&\Je \mathbb PJ^\eps\\
u^\eps(0)=&u_0,\;
A^\eps(0)=A_0,\;
\d_t A^\eps(0)=A_1,
\end{aligned}\right.
\end{equation}
where $\Je=(I-\eps\Delta)^{-1}$, $\Ae=\Je A^\eps$, $N^\eps(u^\eps)=\Je\left(|\Je u^\eps|^{2(\gamma-1)}\Je u^\eps\right)$, $J^\eps=J(u^\eps, A^\eps)$, $\phi^\eps=\phi(|u^\eps|^2)$ and we denote $\nabla_{\Ae}=\nabla-i\Ae$.  
The total energy of this approximating system is given by
\begin{equation}\label{eq:energy}
E=\int_{\mathbb{R}^3}\Big\{\vert\nabla_{\tilde{A}^\eps}u^\eps\vert^2+\frac{1}{2}\vert\nabla\phi^\eps\vert^2+\frac{1}{2}\vert\nabla A^\eps\vert^2+\frac{1}{2}\vert\partial_{t} A^\eps\vert^2+\frac{1}{\gamma}\vert\mathcal{J}^\eps u^\eps\vert^{2\gamma}\Big\}dx
\end{equation}
which is conserved along the flow of solutions. A local well-posedness result, analogous to Theorem \ref{th:1}, can be proved for the system (\ref{eq:appr_ms}) in a straightforward way.  
\begin{proposition}\label{prop:lwpapprox}
For all $(u_0, A_0, A_1)\in X$, there exists 
$T_{max}^\eps>0$ and a unique maximal solution  $(u^\eps, A^\eps)$ to \eqref{eq:appr_ms} such that $u^\eps\in C([0, T_{max}^\eps);H^2(\R^3))$, $A^\eps\in C([0, T_{max}^\eps);H^{3/2}(\R^3))\cap C^1([0, T_{max}^\eps);H^{1/2}(\R^3))$ and the usual blow-up alternative holds true. Moreover, the solution depends continuously on the initial data.
\end{proposition}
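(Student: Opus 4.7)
The proof proceeds by mirroring the fixed-point argument of Proposition \ref{prop:existence}, while exploiting the smoothing and boundedness properties of the Yosida operator $\Je$. Being the Fourier multiplier with symbol $(1+\eps|\xi|^2)^{-1}$, $\Je$ is a contraction on every $H^s(\R^3)$, commutes with $\nabla$ and with the Leray projector $\mathbb P$, and gains two derivatives at the cost of a factor $\eps^{-1}$. In particular, if $\diver A^\eps=0$ then $\diver\Ae=\Je\diver A^\eps=0$, so the regularized potential $\Ae$ stays in the Coulomb gauge, and Proposition \ref{prop:prop} can be applied directly to build the magnetic Schr\"odinger propagator $U_{\Ae}(t,s)$ associated to $-\tfrac12\Delta_{\Ae}$.

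I would work in the same complete metric space $X_T$ of Proposition \ref{prop:existence}, equipped with the distance \eqref{eq:distance} and the norm \eqref{eq:XT}, and define a map $\Phi^\eps:X_T\to X_T$ by Duhamel formulas analogous to \eqref{eq:psi_duham} in which the nonlinearities are replaced by their Yosida-smoothed counterparts. The key a priori bounds transfer with only cosmetic changes. For the current driving the wave equation,
\[
\|\Je\mathbb P J^\eps\|_{H^{1/2}}\leq\|\mathbb P J^\eps\|_{H^{1/2}},
\]
which is controlled by \eqref{eq:pjestimate}; the Hartree term is handled via \eqref{eq:hartree_lip}; and for the power nonlinearity, the smoothing of $\Je$ makes any $\gamma>1$ admissible, since $\Je u^\eps\in H^4$ with norm $\lesssim\eps^{-1}\|u^\eps\|_{H^2}$, so by Sobolev embedding
\[
\|N^\eps(u^\eps)\|_{H^2}\leq\bigl\||\Je u^\eps|^{2(\gamma-1)}\Je u^\eps\bigr\|_{H^2}\lesssim\|\Je u^\eps\|_{L^\infty}^{2(\gamma-1)}\|\Je u^\eps\|_{H^2}\lesssim C(\eps)\|u^\eps\|_{H^2}^{2\gamma-1}.
\]
Plugging these bounds into the Duhamel identities and combining Proposition \ref{prop:prop} with Lemma \ref{lemma:strichwave}, the same choice of parameters $R_1,R_2$ and a sufficiently small $T=T(\eps)$ show that $\Phi^\eps$ maps the closed ball into itself.

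The contraction estimate proceeds exactly as in Proposition \ref{prop:existence}: differences of the Yosida-regularized nonlinearities are expanded by multilinearity, each occurrence of $\Je$ is absorbed via its $H^s$-contractivity, and the resulting integrals are bounded by the same combination of H\"older, Sobolev, and Hardy--Littlewood--Sobolev inequalities used there. Contraction on $X_T$ for a possibly smaller $T>0$ produces a unique local solution, and a standard argument extends it to a maximal interval $[0,T_{max}^\eps)$ satisfying the blow-up alternative. Continuous dependence on the data is obtained by transcribing the argument of Proposition \ref{prop:cont_dep}: the analogues of Lemmas \ref{lemma:1}, \ref{lemma:2}, and \ref{lemma:3} depend only on the structural form of the nonlinear and coupling terms, which is unchanged by the $\Je$-regularization. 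The main point to track---and the only real departure from the unregularized case---is the $\eps$-dependence of the constants in the nonlinear estimates; since $\eps>0$ is held fixed throughout this local construction, this causes no essential obstacle, the $\eps$-uniform bounds being postponed to the global analysis that follows.
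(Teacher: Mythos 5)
Your overall strategy---rerun the fixed-point scheme of Proposition \ref{prop:existence} with the Yosida-regularized nonlinearities, using that $\Je$ is a contraction on every $H^s$, commutes with $\mathbb P$, preserves the Coulomb gauge, and then transcribe the continuous-dependence argument---is exactly what the paper intends (its proof is essentially a one-line remark to this effect). However, your key display for the power nonlinearity has a genuine gap, and it occurs at precisely the point the regularization is designed to address. You estimate
\begin{equation*}
\|N^\eps(u^\eps)\|_{H^2}\leq\bigl\||\Je u^\eps|^{2(\gamma-1)}\Je u^\eps\bigr\|_{H^2}\lesssim\|\Je u^\eps\|_{L^\infty}^{2(\gamma-1)}\|\Je u^\eps\|_{H^2},
\end{equation*}
that is, you discard the \emph{outer} $\Je$ by contractivity and then invoke a Moser-type estimate at the $H^2$ level. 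That second inequality requires $z\mapsto|z|^{2(\gamma-1)}z$ to be $C^2$, i.e. $\gamma>\frac32$---the very restriction of Theorem \ref{th:1} that the regularization is supposed to remove (the proposition must cover all $\gamma\in(1,\infty)$, since Theorem \ref{th:3} uses it for $1<\gamma<3$). The extra smoothness of $\Je u^\eps$ does not rescue the composition, because the obstruction is the non-smoothness of the map $z\mapsto|z|^{2(\gamma-1)}z$ at $z=0$, not the regularity of its argument: near a simple zero of a smooth $v$ the second derivatives of $|v|^{2(\gamma-1)}v$ behave like $|v|^{2\gamma-3}|\nabla v|^2$, which for $\gamma$ close to $1$ (e.g. $\gamma\leq\frac54$) is not locally square integrable, so the claimed bound is false rather than merely unproved.

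The correct use of the regularization, and the one the paper records, is to exploit the outer $\Je$ for smoothing instead of contractivity: since $\|\Je f\|_{H^2}\lesssim_\eps\|f\|_{L^2}$, one has
\begin{equation*}
\|N^\eps(u^\eps)\|_{H^2}\lesssim_\eps\bigl\||\Je u^\eps|^{2(\gamma-1)}\Je u^\eps\bigr\|_{L^2}\leq\|\Je u^\eps\|_{L^\infty}^{2(\gamma-1)}\|\Je u^\eps\|_{L^2}\lesssim\|u^\eps\|_{H^2}^{2\gamma-1},
\end{equation*}
which requires no differentiability of the nonlinearity at all. With this correction the remainder of your argument goes through as you describe: the wave and Hartree estimates, the contraction step (where the $L^2$-level difference bound $\bigl||z|^{2(\gamma-1)}z-|z'|^{2(\gamma-1)}z'\bigr|\lesssim(|z|^{2(\gamma-1)}+|z'|^{2(\gamma-1)})|z-z'|$ is valid for every $\gamma>1$), the blow-up alternative, and the continuous dependence, all with constants that may depend on the fixed $\eps>0$.
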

\begin{proof}
We only remark here that the local well-posedness result for system \eqref{eq:appr_ms} holds for any $\gamma\in(1, \infty)$, while in Theorem \ref{th:1} we restrict the range to $\gamma\in(\frac32, \infty)$. Indeed, because of the Yosida regularisation, we have
\begin{equation*}
\|N^\eps(u^\eps)\|_{H^2}\lesssim\||\Je u^\eps|^{2(\gamma-1)}\Je u^\eps\|_{L^2}\lesssim\|u^\eps\|_{H^2}^{2\gamma-1}.
\end{equation*}
\end{proof}
The regularisation of the nonlinear terms yields indeed the global existence of solutions.
\begin{proposition}\label{prop:gl_ex}
The solution obtained in Proposition (\ref{prop:lwpapprox}) exists globally in time, namely  $\|(u^\eps(t), A^\eps(t), \d_tA^\eps(t))\|_X$ is finite for any $t\in \R$.
\end{proposition}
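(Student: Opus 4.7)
The plan is to invoke the blow-up alternative of Proposition \ref{prop:lwpapprox}: it suffices to show that for each fixed $\eps>0$, on every finite time interval $[0,T]\subset[0,T_{max}^\eps)$, the quantity $\|u^\eps(t)\|_{H^2}+\|A^\eps(t)\|_{H^{3/2}}+\|\d_tA^\eps(t)\|_{H^{1/2}}$ remains bounded. I would proceed in three steps: first extract uniform-in-time low-regularity bounds from the conserved energy, then bootstrap to $H^{3/2}\times H^{1/2}$ bounds on $(A^\eps,\d_tA^\eps)$ via the wave equation, and finally close a (linear) Gronwall inequality for $\|u^\eps\|_{H^2}$ using the Yosida-regularised Duhamel formula.

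The conservation of \eqref{eq:energy}, together with the conservation of $\|u^\eps\|_{L^2}$ (standard for the magnetic Schr\"odinger equation with self-adjoint modifications), gives uniform bounds for $\|\nabla_{\Ae}u^\eps\|_{L^2}$, $\|A^\eps\|_{H^1}$, $\|\d_tA^\eps\|_{L^2}$, and $\|\Je u^\eps\|_{L^{2\gamma}}$. Combining the diamagnetic inequality with $\|\Ae u^\eps\|_{L^2}\lesssim\|\Ae\|_{L^3}\|u^\eps\|_{L^6}$ upgrades this to a uniform bound on $\|u^\eps\|_{H^1}$, which in turn, by interpolation, gives $\|J^\eps\|_{L^{3/2}}\lesssim 1$. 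Applying Lemma \ref{lemma:wave} to $\Box A^\eps=\Je\mathbb P J^\eps$ then produces
\[
\|A^\eps\|_{L^\infty_tH^{3/2}_x}+\|\d_tA^\eps\|_{L^\infty_tH^{1/2}_x}\lesssim (1+T)\bigl(\|A_0\|_{H^{3/2}}+\|A_1\|_{H^{1/2}}+\|\Je\mathbb PJ^\eps\|_{L^1_tH^{1/2}_x}\bigr),
\]
and the crucial gain from the Yosida smoother, namely $\|\Je f\|_{H^{1/2}}\lesssim\eps^{-1}\|f\|_{H^{-3/2}}$, allows us to estimate $\|\Je\mathbb PJ^\eps\|_{H^{1/2}}\lesssim C_\eps$ uniformly in $t$ from the $L^{3/2}$-bound on $J^\eps$. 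This yields a finite bound $C_\eps(T)$ on the wave part.

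For the Schr\"odinger equation, Proposition \ref{prop:prop} and Proposition \ref{eq:Duhamellemma} give
\[
u^\eps(t)=U_{\Ae}(t,0)u_0-i\int_0^tU_{\Ae}(t,s)\bigl(\phi^\eps u^\eps+N^\eps(u^\eps)\bigr)(s)\,ds,
\]
and by \eqref{eq:magschr_prop} the propagator norm is controlled by $\|\Ae\|_{L^\infty_tH^1}$ and $\|\d_t\Ae\|_{L^1_tL^3}\lesssim T\|\d_tA^\eps\|_{L^\infty_tH^{1/2}}$, both already bounded in terms of $C_\eps(T)$. The Hartree factor $\phi^\eps u^\eps$ satisfies \eqref{eq:hartree_lip}, which together with the energy bound gives $\|\phi^\eps u^\eps\|_{H^2}\lesssim C\|u^\eps\|_{H^2}$. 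The power term is where the regularisation pays off: writing $\||\Je u^\eps|^{\g}\Je u^\eps\|_{H^2}\lesssim\|\Je u^\eps\|_{L^\infty}^{\g}\|\Je u^\eps\|_{H^2}$ and using the $\eps$-dependent smoothing estimate $\|\Je u^\eps\|_{L^\infty}\lesssim\eps^{-s_0}\|u^\eps\|_{H^1}$ for some $s_0>0$ (together with $\|\Je u^\eps\|_{H^2}\leq\|u^\eps\|_{H^2}$), we obtain $\|N^\eps(u^\eps)\|_{H^2}\lesssim C_\eps\|u^\eps\|_{H^2}$, which is \emph{linear} in the top norm.

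Putting the above together produces a Gronwall-type inequality
\[
\|u^\eps(t)\|_{H^2}\leq C_\eps(T)\|u_0\|_{H^2}+C_\eps(T)\int_0^t\|u^\eps(s)\|_{H^2}\,ds,
\]
which forces $\|u^\eps\|_{L^\infty_tH^2_x([0,T])}\leq C_\eps(T)e^{C_\eps(T)T}<\infty$ for every finite $T$, contradicting the blowup alternative if $T_{max}^\eps<\infty$. The main obstacle (and essentially the only reason the argument works here while failing for \eqref{eq:msc} itself) is the bound on $\|N^\eps(u^\eps)\|_{H^2}$: without the Yosida smoother $\Je$, one can at best estimate $\||u|^{\g}u\|_{H^2}\lesssim\|u\|_{L^\infty}^{\g}\|u\|_{H^2}$, which demands $\|u\|_{H^s}$ with $s>3/2$ and produces a superlinear Gronwall inequality that may blow up in finite time; the operator $\Je$ replaces the $L^\infty$ norm of $u^\eps$ by an $\eps$-dependent multiple of a lower-regularity norm already bounded by the energy, so linearity in the top norm is restored and the bootstrap closes.
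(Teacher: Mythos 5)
Your overall skeleton matches the paper's: invoke the blow-up alternative, use the conserved energy and mass to obtain uniform $H^1\times H^1\times L^2$ bounds and $\|J^\eps\|_{L^{3/2}_x}\leq C(E)$, bound the wave part on $[0,T]$ through Lemma \ref{lemma:wave} together with the smoothing of $\Je$ applied to $\mathbb PJ^\eps$ (the paper uses the one-derivative gain $\|\Je\mathbb PJ^\eps\|_{H^{1/2}}\lesssim\|\mathbb PJ^\eps\|_{H^{-1/2}}\lesssim\|J^\eps\|_{L^{3/2}}$; your two-derivative version is equally fine), and finally close a \emph{linear} Gronwall inequality for $\|u^\eps\|_{H^2}$. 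For that last step you argue via the Duhamel formula and the propagator bound \eqref{eq:magschr_prop}, whereas the paper differentiates $\|\Delta_{\Ae}u^\eps\|_{L^2}$ in time and absorbs the commutator $[\d_t,\Delta_{\Ae}]u^\eps=2\d_t\Ae\cdot(\nabla+i\Ae)u^\eps$ using $\|\d_t\Ae\|_{L^3}\lesssim\|\d_tA^\eps\|_{H^{1/2}}$. The two routes are equivalent in substance, both yielding an exponential-in-$T$ bound, which is all that is needed.

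The one point you should repair is the estimate of the power term. You bound $\|N^\eps(u^\eps)\|_{H^2}$ by $\|\Je u^\eps\|_{L^\infty}^{\g}\|\Je u^\eps\|_{H^2}$, which is a Moser/chain-rule estimate in $H^2$ and therefore requires $z\mapsto|z|^{\g}z$ to be of class $C^2$, i.e. $\gamma\geq\frac32$; for $1<\gamma<\frac32$ the second derivative of the nonlinearity is unbounded near $z=0$ and the $H^2$ composition estimate fails. Since Proposition \ref{prop:lwpapprox} holds for all $\gamma>1$ and Theorem \ref{th:3} needs the full range $1<\gamma<3$, this leaves a gap for $1<\gamma<\frac32$. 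The paper avoids the issue by exploiting the \emph{outer} Yosida operator rather than the inner one:
\begin{equation*}
\|N^\eps(u^\eps)\|_{H^2}=\|\Je\left(|\Je u^\eps|^{\g}\Je u^\eps\right)\|_{H^2}\lesssim_\eps\||\Je u^\eps|^{\g}\Je u^\eps\|_{L^2},
\end{equation*}
and the $L^2$ norm of the power is then controlled by H\"older and Sobolev embedding for every $\gamma>1$, still linearly in the top norm. Substituting this for your estimate makes your Gronwall argument close on the whole range. (A cosmetic remark: your uniform $H^1$ bound on $u^\eps$ also uses $\|A^\eps(t)\|_{L^2}$, which is not contained in the energy but follows on compact intervals from $\|A^\eps(t)\|_{L^2}\leq\|A_0\|_{L^2}+t\|\d_tA^\eps\|_{L^\infty_tL^2_x}$; the same is implicitly used in the paper.)
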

The proof of Proposition \ref{prop:gl_ex} is based on the following
\begin{lemma}\label{lemma:apriori}
Let $\eps>0$, then for every $t\in\R$,
\begin{equation}\label{eq:aprioriestimate}
\|u^\eps(t)\|_{H^2}\leq C(\|u_0\|_{L^2}, E)e^{t\|\d_tA^\epsilon\|_{L^\infty_tH^{1/2}_x}}.
\end{equation}
\end{lemma}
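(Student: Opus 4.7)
My plan is to bound $v^\eps := \d_t u^\eps$ in $L^2$, then transfer that bound to $\|u^\eps\|_{H^2}$ through the equation and \eqref{eq:H2tomagLap}. Conservation of the energy \eqref{eq:energy} along the flow of \eqref{eq:appr_ms} provides uniform-in-time control of $\|u^\eps\|_{H^1}$, $\|A^\eps\|_{H^1}$, $\|\d_t A^\eps\|_{L^2}$ and $\|\Je u^\eps\|_{L^{2\gamma}}$ in terms of $E$, and $\|u^\eps\|_{L^2}=\|u_0\|_{L^2}$ is preserved by the flow. I will also use the fact that $\Je$ is contractive on every $L^p(\R^3)$, $p\in[1,\infty]$, and smooths by two derivatives at the cost of an $\eps$-dependent constant.

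First I would differentiate the Schr\"odinger equation in time. In the Coulomb gauge $\Delta_{\Ae}=\Delta-2i\Ae\cdot\nabla-|\Ae|^2$, so a direct computation gives
$$i\,\d_t v^\eps \;=\; -\tfrac{1}{2}\Delta_{\Ae} v^\eps + \phi^\eps v^\eps + R^\eps,$$
with $R^\eps := i\,\d_t\Ae\cdot\nabla u^\eps + \Ae\cdot\d_t\Ae\,u^\eps + \d_t\phi^\eps\,u^\eps + \d_t N^\eps(u^\eps)$. Pairing in $L^2$ with $\overline{v^\eps}$ and taking imaginary parts makes the magnetic Laplacian and the real potential $\phi^\eps$ drop out, leaving
$$\tfrac{d}{dt}\|v^\eps\|_{L^2}^2 \;\leq\; 2\,\|R^\eps\|_{L^2}\,\|v^\eps\|_{L^2}.$$

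The bulk of the work is to estimate $\|R^\eps\|_{L^2}$. For the two $\d_t\Ae$-terms I would apply H\"older's inequality with $H^{1/2}\hookrightarrow L^3$ for $\d_t\Ae$ and $H^1\hookrightarrow L^6$ (plus \eqref{eq:covar_L6}) for $\Ae$ and $\nabla u^\eps$, producing contributions of size $C(E)\|\d_t A^\eps\|_{H^{1/2}}\|u^\eps\|_{H^2}$. For $\d_t\phi^\eps\,u^\eps = 2(-\Delta)^{-1}\!\RE(\overline{u^\eps}v^\eps)\,u^\eps$ I would apply the three-factor bound \eqref{eq:inverselaplacianestimate} to get $\lesssim\|v^\eps\|_{L^2}\|u^\eps\|_{L^3}^2 \lesssim C(E)\|v^\eps\|_{L^2}$. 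For $\d_t N^\eps(u^\eps)$ the pointwise bound $|\d_t N^\eps(u^\eps)|\lesssim \Je(|\Je u^\eps|^{2(\gamma-1)}|\Je v^\eps|)$ together with $L^p$-contractivity of $\Je$ and its $\eps$-dependent $L^2\to L^\infty$ bound (essential for $\gamma>2$) gives a bound $\leq C(E,\eps)\|v^\eps\|_{L^2}$.

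To close the loop I use \eqref{eq:H2tomagLap} together with the equation rewritten as $\Delta_{\Ae}u^\eps = -2iv^\eps+2\phi^\eps u^\eps+2N^\eps(u^\eps)$, which yields $\|u^\eps\|_{H^2}\lesssim \|v^\eps\|_{L^2}+C(E,\eps)$. Substituting back produces the linear differential inequality
$$\tfrac{d}{dt}\|v^\eps\|_{L^2}\;\leq\; C(E,\eps)\bigl(1+\|\d_t A^\eps\|_{H^{1/2}}\bigr)\bigl(\|v^\eps\|_{L^2}+1\bigr),$$
to which Gronwall delivers an exponential bound on $\|v^\eps(t)\|_{L^2}$ with exponent proportional to $t\|\d_t A^\eps\|_{L^\infty_t H^{1/2}_x}$; \eqref{eq:H2tomagLap} then converts this into the claimed bound on $\|u^\eps(t)\|_{H^2}$. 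The main obstacle is precisely the feedback through $\|u^\eps\|_{H^2}$ inside the $\d_t\Ae$-terms of $R^\eps$: without the exchange $\|u^\eps\|_{H^2}\leftrightarrow \|v^\eps\|_{L^2}$ afforded by \eqref{eq:H2tomagLap}, one would be left with a superlinear Gronwall blowing up in finite time, and it is the Yosida regularization that keeps the power nonlinearity linear in $\|v^\eps\|_{L^2}$ so that the loop can close.
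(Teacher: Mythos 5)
Your proof is correct, but it runs the $H^2$-level energy argument on a different quantity than the paper does. The paper works directly with $\|\Delta_{\Ae}u^\eps\|_{L^2}$: after reducing $\|u^\eps\|_{H^2}$ to this quantity via \eqref{eq:H2tomagLap}, it differentiates it in time and bounds $\tfrac{d}{dt}\|\Delta_{\Ae}u^\eps\|_{L^2}$ by $\|\Delta_{\Ae}(\phi^\eps u^\eps)\|_{L^2}+\|\Delta_{\Ae}N^\eps(u^\eps)\|_{L^2}+\|[\d_t,\Delta_{\Ae}]u^\eps\|_{L^2}$, estimating the first two via \eqref{eq:magLaptoH2}, \eqref{eq:hartree_lip} and the Yosida smoothing, and the commutator exactly as you do. You instead propagate $\|\d_tu^\eps\|_{L^2}$ and convert back to $\|u^\eps\|_{H^2}$ through the elliptic identity $\Delta_{\Ae}u^\eps=-2iv^\eps+2\phi^\eps u^\eps+2N^\eps(u^\eps)$ plus \eqref{eq:H2tomagLap}. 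The two routes are parallel and rest on the same two structural facts — the only source of exponential growth is the commutator $[\d_t,\Delta_{\Ae}]u^\eps=-2i\,\d_t\Ae\cdot(\nabla-i\Ae)u^\eps$, controlled by $\|\d_tA^\eps\|_{H^{1/2}}$ times the top-order norm via \eqref{eq:covar_L6}, and the Yosida regularization keeps the power nonlinearity linear in that norm. What the paper's choice buys is that it never has to differentiate the nonlinearities in time (it applies $\Delta_{\Ae}$ to them instead); what yours buys is avoiding the estimate of $\Delta_{\Ae}$ acting on products, at the cost of the extra terms $\d_t\phi^\eps u^\eps$ and $\d_tN^\eps(u^\eps)$ (both of which you handle correctly, the latter with the expected $\eps$-dependent constant, which is implicitly present in the paper's proof as well) and of the initial-time bound $\|v^\eps(0)\|_{L^2}\lesssim\|u_0\|_{H^2}(1+\|A_0\|_{H^1}^4)+C(E,\eps)$, which you should state explicitly since it is what actually seeds the Gronwall iteration; note that both proofs therefore produce a constant depending on $\|u_0\|_{H^2}$ and not merely on $\|u_0\|_{L^2}$, the statement of the lemma notwithstanding. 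Both arguments are formal at the level of the time differentiations performed and would be justified identically by regularizing the data and using continuous dependence.
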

\begin{proof}
By \eqref{eq:H2tomagLap} we have
\begin{equation*}
\begin{aligned}
\|u^\eps\|_{H^2}\lesssim&\|\Delta_{\Ae}u^\eps\|_{L^2}+\|A^\eps\|_{H^1}^4\|u^\eps\|_{L^2}\\
\leq&C(\|u_0\|_{L^2}, E)\|\Delta_{\Ae}u^\eps\|_{L^2}\,,
\end{aligned}
\end{equation*}
therefore it is convenient to estimate the norm $\|\Delta_{\Ae}u^\eps\|_{L^2}$ instead of $\Vert u\Vert_{H^2(\mathbb{R}^3)}$. By a standard energy method it follows that
\begin{equation*}
\frac{d}{dt}\left(\|(\Delta_{\Ae}u^\eps)(t)\|_{L^2}\right)\leq\|\Delta_{\Ae}(\phi^\eps u^\eps)\|_{L^2}+\|\Delta_{\Ae}N^\eps(u^\eps)\|_{L^2}+\|[\d_t, \Delta_{\Ae}]u^\eps\|_{L^2}.
\end{equation*}
The first term can be estimated by using \eqref{eq:magLaptoH2} and \eqref{eq:hartree_lip},
\begin{equation*}
\begin{aligned}
\|\Delta_{\Ae}(\phi^\eps u^\eps)\|_{L^2}\lesssim&\|\phi^\eps u^\eps\|_{H^2}+\|A^\eps\|_{H^1}^4\|\phi^\eps u^\eps\|_{L^2}\\
\lesssim&\|u^\eps\|_{H^{3/4}}^2\left(\|\Delta_{\Ae}u^\eps\|_{L^2}+\|A^\eps\|_{H^1}^4\|u^\eps\|_{L^2}\right)\\
\leq&C(\|u_0\|_{L^2}, E)\|\Delta_{\Ae}u^\eps\|_{L^2}.
\end{aligned}
\end{equation*}
The nonlinear term $N^\eps(u^\eps)$ can be controlled by exploting the regularization given by $\Je$ 
\begin{equation*}
\begin{aligned}
\|\Delta_{\Ae}N^\eps(u^\eps)\|_{L^2}\lesssim&\|N^\eps(u^\eps)\|_{H^2}+\|A^\eps\|_{H^1}^4\|N^\eps(u^\eps)\|_{L^2}\\
\lesssim&\||\Je u^\eps|^{2(\gamma-1)}\Je u^\eps\|_{L^2}+\|A^\eps\|_{H^1}^4\|N^\eps(u^\eps)\|_{L^2}\\
\lesssim&\|u^\eps\|_{H^1}^{2(\gamma-1)}\left(\|u^\eps\|_{H^2}+\|A^\eps\|_{H^1}^4\|u^\eps\|_{L^2}\right),
\end{aligned}
\end{equation*}
where the last inequality follows from Sobolev embedding. The commutator $[\d_t, \Delta_{\Ae}]u^\eps=2\d_t\Ae(\nabla+i\Ae)u^\eps$ can be estimated by using the H\"older's inequality and  the Sobolev embedding
\begin{equation*}
\begin{aligned}
\|\d_t\Ae\cdot(\nabla+i\Ae)u^\eps\|_{L^2}\leq&\|\d_t\Ae\|_{L^3}\|(\nabla+i\Ae)u^\eps\|_{L^6}\\
\lesssim&\|\d_tA^\eps\|_{H^{1/2}}\left(\|\Delta_{\Ae}u^\eps\|_{L^2}+\|A^\eps\|_{H^1}^4\|u^\eps\|_{L^2}\right).
\end{aligned}
\end{equation*}
By summing up the previous three terms 
\begin{equation*}
\frac{d}{dt}\left(\|(\Delta_{\Ae}u^\eps)(t)\|_{L^2}\right)\leq C(\|u_0\|_{L^2}, E)\|\d_tA^\eps\|_{H^{1/2}}\|\Delta_{\Ae}u^\eps\|_{L^2}\,,
\end{equation*}
hence \eqref{eq:aprioriestimate}.
\end{proof}

\emph{Proof of Proposition \ref{prop:gl_ex}}.
In order to get a bound on the $H^2$ norm of the approximating solution $u^\epsilon$, by Lemma \ref{lemma:apriori} it is sufficient to control $\|\d_tA^\eps\|_{L^\infty_tH^{1/2}_x}$. Using the energy estimate for the wave equation 
\begin{equation*}
\|A^\eps\|_{L^\infty_tH^{3/2}_x}+\|\d_tA^\eps\|_{L^\infty_tH^{1/2}_x}\lesssim C(T)\left(\|A_0\|_{H^{3/2}}+\|A_1\|_{H^{1/2}}+\|\Je\mathbb P J^\eps\|_{L^\infty_tH^{1/2}_x}\right)\,,
\end{equation*}
and, by exploiting the Yosida regularization, we get
\begin{equation*}
\|\Je\mathbb PJ^\eps\|_{L^\infty_tH^{1/2}_x}\lesssim\|\mathbb PJ^\eps\|_{L^\infty_tH^{-1/2}_x}\lesssim\|J^\eps\|_{L^\infty_tL^{3/2}_x}\leq C(E).
\end{equation*}
It follows that $\|A^\eps(t)\|_{H^{3/2}}+\|\d_tA^\eps(t)\|_{H^{1/2}}$ is uniformly bounded on compact time intervals and consequently by \eqref{eq:aprioriestimate} also $\|u^\eps(t)\|_{H^2}$ is finite. Hence, by the blow-up alternative, the solution $(u^\eps, A^\eps)$ to (\ref{eq:appr_ms}) exists globally in time.
\hfill $\square$\vspace{1cm}

Now we conclude the proof of Theorem \ref{th:3} by showing that $(u^\eps, A^\eps)$ converges to a solution to \eqref{eq:msc}, as $\eps\to0$. This will conclude the proof of Theorem (\ref{th:3}).
\\The conservation of mass and energy yields the following a priori bounds
\begin{equation}\label{eq:apr}
\begin{aligned}
&\|u^\eps\|_{L^\infty_tH^1_x(\R\times\R^3)}\leq C,\\
&\|A^\eps\|_{L^\infty_tH^1_x(\R\times\R^3)}\leq C,\quad\|\d_tA^\eps\|_{L^\infty_tL^2_x(\R\times\R^3)}\leq C\,,
\end{aligned}
\end{equation}
which imply that, up to subsequences, there exist $u\in L^\infty_tH^1_x$, $A\in L^\infty_tH^1_x\cap W^{1, \infty}_tL^2_x$, such that
\begin{align}
\label{eq:uweak}u^\eps\stackrel{*}{\rightharpoonup}u\,\,\,\,\,&\textrm{in}\,\,\,\,\,L^\infty_t H^1_x(\mathbb{R}\times\mathbb{R}^3)\\
\label{eq:aweak}A^\eps\stackrel{*}{\rightharpoonup} A\,\,\,\,\,&\textrm{in}\,\,\,\,\,L^\infty_t  H^1_x(\mathbb{R}\times\mathbb{R}^3)\\
\label{eq:ptaweak}\partial_{t} A^\eps\stackrel{*}{\rightharpoonup}\partial_{t} A\,\,\,\,\,&\textrm{in}\,\,\,\,\,L^\infty_t L^2_x(\mathbb{R}\times\mathbb{R}^3)
\end{align}
\begin{proposition}\label{prop:passage_limit}
The weak limit $(u,A)$ in (\ref{eq:uweak}), (\ref{eq:aweak}) is a finite energy weak solution to the Cauchy problem (\ref{eq:msc}), with initial datum $(u_0,A_0,A_1)$.
\end{proposition}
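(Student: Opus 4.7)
The plan is to upgrade the weak-$*$ convergences \eqref{eq:uweak}--\eqref{eq:ptaweak} to strong convergence on compact space--time sets by an Aubin--Lions type argument, and then pass to the limit in every nonlinear term of \eqref{eq:appr_ms}. The main difficulty is that the current $J^\eps$ and the magnetic drift term $\Ae\cdot\nabla u^\eps$ are products whose factors converge only weakly in $L^2$; strong compactness of $u^\eps$ and $A^\eps$ in the spatial variable is therefore essential to identify the limits of these bilinear expressions in the sense of distributions.

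\textbf{Compactness.} First I would produce uniform bounds on the time derivatives. From the Schr\"odinger equation in \eqref{eq:appr_ms}, together with the a priori bounds \eqref{eq:apr} and the estimates of Section \ref{sect:prel}, one deduces $\d_t u^\eps\in L^\infty_tH^{-1}_x$ uniformly in $\eps$. From the wave equation and the bound $\|\Je \mathbb PJ^\eps\|_{L^\infty_tH^{-1/2}_x}\lesssim \|J^\eps\|_{L^\infty_tL^{3/2}_x}\le C(E)$ already exploited in the proof of Proposition \ref{prop:gl_ex}, one obtains $\d_{tt} A^\eps\in L^\infty_tH^{-1}_x$ uniformly. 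Combined with the uniform $H^1$ bounds \eqref{eq:apr} and the compact embedding $H^1_{loc}\hookrightarrow L^p_{loc}$ for $p<6$, the Aubin--Lions lemma yields, along a subsequence,
\begin{equation*}
u^\eps\to u,\qquad A^\eps\to A\qquad\text{strongly in }L^2_{loc}((0,T);L^p_{loc}(\R^3))\quad\text{for every }2\le p<6,
\end{equation*}
together with a.e.\ convergence on $(0,T)\times\R^3$. Since $\Je\to I$ strongly on bounded subsets of $L^q$ as $\eps\to 0$, the regularized quantities $\Ae=\Je A^\eps$ and $\Je u^\eps$ inherit the same strong convergence.

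\textbf{Passing to the limit.} In the Schr\"odinger equation I would expand $-\tfrac12\Delta_{\Ae}u^\eps=-\tfrac12\Delta u^\eps+i\Ae\cdot\nabla u^\eps+\tfrac12|\Ae|^2u^\eps$ using $\diver A^\eps=0$. The product $\Ae\cdot\nabla u^\eps$ converges in $\mathcal D'$ as strong$\,\times\,$weak, while $|\Ae|^2 u^\eps$ converges in $L^1_{loc}$. For the Hartree potential, $|u^\eps|^2\to|u|^2$ in $L^1_{loc}$ and continuity of $(-\Delta)^{-1}$ give $\phi^\eps u^\eps\to\phi u$ distributionally. For the power nonlinearity the assumption $1<\gamma<3$ is crucial: it yields $2\gamma-1<5$, so by Sobolev embedding $|\Je u^\eps|^{2(\gamma-1)}\Je u^\eps$ is uniformly bounded in $L^\infty_t L^{6/(2\gamma-1)}_x$ and, combined with the a.e.\ convergence of $\Je u^\eps$, Vitali's theorem passes $N^\eps(u^\eps)$ to $|u|^{2(\gamma-1)}u$ in $L^1_{loc}$. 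In the wave equation I decompose $\mathbb PJ^\eps=\mathbb P\IM(\bar{u^\eps}\nabla u^\eps)-\mathbb P(|u^\eps|^2A^\eps)$; the first summand converges in $\mathcal D'$ thanks to the strong convergence of $\bar{u^\eps}$ in $L^p_{loc}$ and the weak convergence of $\nabla u^\eps$ in $L^2$, while the second converges in $L^1_{loc}$ as a product of two strongly convergent sequences. The outer regularization $\Je$ and the Leray projector $\mathbb P$ pass through without issue.

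\textbf{Initial data and main obstacle.} The uniform bounds on $\d_t u^\eps,\d_t A^\eps,\d_{tt}A^\eps$ produce equicontinuity in time in suitable negative Sobolev norms, so $u^\eps(0)\rightharpoonup u(0)$, $A^\eps(0)\rightharpoonup A(0)$ and $\d_t A^\eps(0)\rightharpoonup \d_t A(0)$ in the distributional sense, preserving the initial datum $(u_0,A_0,A_1)$. The lower semicontinuity of the energy functional \eqref{eq:energy} under the above convergences then transfers the energy bound to $(u,A)$, so that $(u,A)$ is a finite energy weak solution in the sense of \cite{GNS}. The step I expect to be most delicate is the identification of the limit of the current $J^\eps$, since it hinges entirely on the spatial compactness of $u^\eps$ obtained from equation-driven time regularity; once this strong$\,\times\,$weak product is handled correctly, the remaining nonlinear passages are either purely strong (for the drift and Hartree terms) or rely on the subcritical growth condition $\gamma<3$ (for the power nonlinearity).
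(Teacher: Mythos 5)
Your proposal is correct and follows essentially the same route as the paper: uniform bounds on $\d_tu^\eps$ and $\d_tA^\eps$ from the equations and the conserved energy, Aubin--Lions compactness to upgrade to strong local convergence of $u^\eps$ and $A^\eps$ in subcritical Lebesgue spaces, weak--strong product arguments for the drift and current terms, the subcriticality $1<\gamma<3$ for the power nonlinearity, and a time-continuity argument to recover the initial data. The only differences are cosmetic (the paper works with strong convergence in $L^4_{loc}\cap L^{2\gamma}_{loc}$ and identifies the limits of $K^\eps$, $\mathbb PJ^\eps$, $N^\eps$ weakly in $L^{4/3}_{loc}$ and $L^{2\gamma/(2\gamma-1)}_{loc}$, and verifies the initial conditions by explicit integration by parts against test functions in time).
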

\begin{proof}
Let us consider $u^\eps$, by using equation \eqref{eq:appr_ms} and the a priori bounds given by the energy we have $\{\d_tu^\eps\}$ is uniformly bounded in $L^\infty(\R;H^{-1}(\R^3))$. Hence, by using the Aubin-Lions lemma and from the assumption $1<\gamma<3$ we may infer
\begin{equation}\label{eq:ustrong}
u^\eps\to u\,\,\,\,\,\textrm{in}\,\,\,\,\,L^4_{loc}(\R\times\R^3)\cap L^{2\gamma}_{loc}(\R\times\R^3)\,.
\end{equation}
This also implies that $|u^\eps|^2\rightharpoonup|u|^2$ in $L^2_tL^{6/5}_x$, and consequently from Hardy-Littlewood-Sobolev we obtain
\begin{equation}\label{eq:hartreeconv}
(-\Delta)^{-1}(|u^\eps|^2)\rightharpoonup(-\Delta)^{-1}(|u|^2),\quad\textrm{in}\,L^2_tL^6_x.
\end{equation}
Analogously for $A^\eps$, the a priori bounds yield
\begin{equation}\label{eq:astrong}
A^\eps\to A\,\,\,\,\,\textrm{in}\,\,\,\,\, L^4_{loc}(\R\times\R^3)\,.
\end{equation}
We are now able to show the convergence for the nonlinear terms $K^\eps(u^\eps, A^\eps)$, $N^\eps(u^\eps)$, $\Je\mathbb PJ^\eps$, where
\begin{equation*}
K^\eps(u^\eps,\tilde{A}^\eps)=i\tilde{A}^\eps\cdot\nabla u^\eps+\frac12\vert\tilde{A}^\eps\vert^2 u^\eps+\phi(u^\eps)u^\eps\,,
\end{equation*}
Indeed, by using the convergences \eqref{eq:uweak}-\eqref{eq:astrong} we may conclude
\begin{align*}
K^\eps(u^\eps,\tilde{A}^\eps) &\rightharpoonup K(u,A)\,\,\,\,\,\textrm{in}\,\,\,\,\, L^\frac{4}{3}_{loc}(\R\times\R^3)\,,\\
\mathbb PJ(u^\eps,\Ae)&\rightharpoonup \mathbb PJ(u,A)\,\,\,\,\,\textrm{in}\,\,\,\,\, L^\frac{4}{3}_{loc}(\R\times\R^3)\,,\\
N^\eps(u^\eps)&\rightharpoonup N(u),\quad\textrm{in}\;L^{\frac{2\gamma}{2\gamma-1}}_{loc}(\R\times\R^3).
\end{align*}

It remains to see that the initial condition is satisfied. We have that $\partial_{t} A\in L^\infty_t L^2_x(\mathbb{R}\times\mathbb{R}^3)$ and $\partial_{t}^2 A, \partial_{t} u\in L^\infty_t H^{-1}_x(\mathbb{R}\times\mathbb{R}^3)$, and consequently 
$(u, A, \d_tA)\in C(\R;H^{-1}\times L^2\times H^{-1})$. Moreover, the energy bounds imply $(u, A, \d_tA)\in L^\infty(\R;H^1\times H^1\times L^2)$ and hence we may also infer the weak continuity 
$(u, A, \d_tA)\in C_w(\R;H^1\times H^1\times L^2)$. 
\\Since $A^\eps\in L_2(0,T;H^1(\mathbb{R}^3))$ and $\partial_{t} A^\eps\in L^2(0,T;L^2(\mathbb{R}^3))$, integrating by parts we have 
$$\int_0^T\left\langle A^\eps(t)\partial_{t} f(t)+\partial_{t} A^\eps(t)f(t),\varphi\right\rangle_{H^1,H^{-1}}ds=-\left\langle A_0,\varphi\right\rangle$$
for every $\varphi\in L^2(\mathbb{R}^3)$ and all $f\in C^\infty(\mathbb{R})$ with $f(0)=1$ and $f(T)=0$. As $\eps\to 0$ we obtain
$$\int_0^T\{A(t)\partial_{t} f(t)+\partial_{t} A(t) h(t)\}\,dt=-A_0$$
in $L^2(\mathbb{R}^3)$, which implies $A|_{t=0}=A_0$.
Now we have
$$\int_0^T\left\langle\partial_{t} A^\eps\partial_{t} f(t)+\{\Delta A^\eps-\mathbb PJ(u^\eps,\Ae)\}f(t),\eta\right\rangle=\left\langle A_1,\eta\right\rangle\,,$$
and as $\eps\to 0$, we find
$$\int_0^T\{\partial_{t} A(t)\partial_{t} f(t)+\partial_{t}^2 A(t)f(t)\}=A_1$$
in $H^{-1}(\mathbb{R}^3)$, which gives us $\partial_{t} A|_{t=0}=A_1$. Applying the same argument to $u^\eps$ we deduce that $u|_{t=0}=u_0$.
\end{proof}
\section{Quantum Magnetohydrodynamics}\label{sect:QMHD}
Our last Section is devoted to point out the relation between the nonlinear Maxwell-Schr\"odinger system \eqref{eq:msc} and quantum magnetohydrodynamic (QMHD) models. Such hydrodynamic systems have been introduced in the physics literature, motivated by various applications to semiconductor devices, dense astrophysical plasmas (e.g. in white dwarfs), or laser plasmas \cite{Haas, HaasB, SE, ShuEl}. As a simplification, let us consider a one-spiecies charged quantum fluid 
with self-generated electromagnetic fields. The dynamics is described by the following system
\begin{equation}\label{eq:QMHD}
\left\{\begin{aligned}
&\d_t\rho+\diver J=0\\
&\d_tJ+\diver\left(\frac{J\otimes J}{\rho}\right)+\nabla P(\rho)=\rho E+J\wedge B+\frac12\rho\nabla\left(\frac{\Delta\sqrt{\rho}}{\sqrt{\rho}}\right),
\end{aligned}\right.
\end{equation}
where $\rho$ denotes the charge density and $J$ the current density of the quantum fluid. Here all the constants are normalized to one. The pressure term $P(\rho)$ is assumed to be isentropic of the form
 $P(\rho)=\frac{\gamma-1}{\gamma}\rho^\gamma$, $1<\gamma<3$. 
 The last term in the equation for the current density can be written in different ways
  \begin{equation}\label{eq:bohm}
 \frac12\rho\nabla\left(\frac{\Delta\sqrt{\rho}}{\sqrt{\rho}}\right)=\frac14\nabla\Delta\rho-\diver(\nabla\sqrt{\rho}\otimes\nabla\sqrt{\rho})=\frac14\diver(\rho\nabla^2\log\rho).
 \end{equation}
 and it can be seen as a self-consistent quantum potential (the so called Bohm potential) or as a quantum correction to the stress tensor. Mathematically speaking, this is a third order nonlinear dispersive term.
 The hydrodynamical system above is complemented by the Maxwell equations for the electromagnetic fields $E$ and $B$
 \begin{equation}\label{eq:Max}
 \left\{\begin{aligned}
 &\diver E=\rho,\quad\nabla\wedge E=-\d_tB\\
 &\diver B=0,\quad\nabla\wedge B=J+\d_tE.
 \end{aligned}\right.
 \end{equation}
In recent years a global existence theory of finite energy weak solutions for a class a quantum hydrodynamic systems has been established by the first and third author of this paper in \cite{AM1, AM2, AMContMath}. By means of a polar factorization techinque it is possible to define the hydrodynamic quantities by considering the Madelung transform of a wave function solution to a nonlinear Schr\"odinger equation. In this way the definition of the velocity field in the nodal regions is no longer needed. We also mention in the $H^2$ case the construction given in \cite{GaM}. Furthermore it could be interesting to consider also confining potentials as in \cite{AnCS}, generated by external magnetic fields.
The aim of this Section is to show the existence of a finite energy weak solution to \eqref{eq:QMHD}-\eqref{eq:Max} by taking advantage of our results on the system \eqref{eq:msc}.
\begin{definition}\label{def:FEWS}
Let $\rho_0, J_0, E_0, B_0\in L^1_{loc}(\R^3)$, then a finite energy weak solution to system \eqref{eq:QMHD}-\eqref{eq:Max} in the space-time slab $[0, T)\times\R^3$ is given by a quadruple $(\sqrt{\rho}, \Lambda, \phi, A)$ such that
\begin{enumerate}
\item $\sqrt{\rho}\in L^\infty([0, T);H^1(\R^3))$, $\Lambda\in L^\infty([0, T);L^2(\R^3))$, $\phi\in L^\infty([0, T);H^1(\R^3))$, $A\in L^\infty([0, T);H^1(\R^3))\cap W^{1, \infty}([0, T);L^2(\R^3))$;
\item $\rho:=(\sqrt{\rho})^2$, $J:=\sqrt{\rho}\Lambda$, $E:=-\d_tA-\nabla\phi$, $B:=\nabla\wedge A$;
\item $J\in L^2([0, T);L^2_{loc}(\R^3))$;
\item $\forall\;\eta\in C^\infty_c([0, T)\times\R^3)$,
\begin{equation*}
\int_0^T\int_{\R^3}\rho\d_t\eta+J\cdot\nabla\eta\,dxdt+\int_{\R^3}\rho_{0}(x)\eta(0, x)\,dx=0;
\end{equation*}
\item $\forall\zeta\in C^\infty_c([0, T)\times\R^3;\R^3)$,
\begin{equation*}
\begin{aligned}
\int_0^T\int_{\R^3}&J\cdot\d_t\zeta+\Lambda\otimes\Lambda:\nabla\zeta+P(\rho)\diver\zeta
+\rho E\cdot\zeta+(J\wedge B)\cdot\zeta\\&+\nabla\sqrt{\rho}\otimes\nabla\sqrt{\rho}:\nabla\zeta
+\frac{1}{4}\rho\Delta\diver\zeta\,dxdt
+\int_{\R^3}J_{0}(x)\cdot\zeta(0, X)\,dx=0;
\end{aligned}
\end{equation*}
\item $E, B$ satisfy \eqref{eq:Max} in $[0, T)\times\R^3$ in the sense of distributions;
\item (finite energy) the total mass and energy defined by
\begin{equation}\label{eq:QHD_mass} 
M(t):=\int_{\R^3}\rho(t, x)\,dx,
\end{equation}
\begin{equation}\label{eq:QMHD_en}
E(t)=\int_{\R^3}\frac12|\nabla\sqrt{\rho}|^2+\frac12|\Lambda|^2+f(\rho)
+\frac12|\d_tA|^2+\frac12|\nabla A|^2+\frac12|\nabla\phi|^2\,dx
\end{equation}
respectively, are finite for every $t\in[0, T)$. Here $f(\rho)=\frac{1}{\gamma}\rho^\gamma$.

\end{enumerate}
\end{definition}
\begin{proposition}\label{prop:QMHD}
Let $(\rho_0, J_0, B_0, E_0)$ be such that $\rho_0:=|u_0|^2$, $J_0:=\RE(\bar u_0(-i\nabla-A_0)u_0)$, $B_0:=\nabla\wedge A_0$, $E_0:=-A_1-\nabla\phi_0$, $\phi_0:=(-\Delta)^{-1}|u_0|^2$ for some 
$(u_0, A_0, A_1)\in X$, then there exists  $T_{max}>0$ such that $(\sqrt{\rho}, \Lambda, \phi, A)$ is a finite energy weak solution to \eqref{eq:QMHD}-\eqref{eq:Max} with initial data $(\rho_0, J_0, B_0, E_0)$ in the space-time slab $[0, T_{max})\times\R^3$. Moreover, the energy is conserved for all $t\in[0, T_{max})$.
\end{proposition}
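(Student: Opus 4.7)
The plan is to derive the QMHD solution by applying the Madelung/polar-factorization machinery of \cite{AM1, AM2} to the local $H^2 \times H^{3/2}$ solution of \eqref{eq:msc} provided by Theorem \ref{th:1}. Since $\gamma \in (3/2, 3)$, Theorem \ref{th:1} yields a maximal solution $(u, A)$ to \eqref{eq:msc} with lifespan $T_{max} > 0$. I would then introduce the hydrodynamic variables
\begin{equation*}
\rho := |u|^2, \quad J := \IM(\bar u(\nabla - iA)u), \quad \phi := (-\Delta)^{-1}\rho, \quad E := -\d_tA - \nabla\phi, \quad B := \nabla \wedge A,
\end{equation*}
and invoke the polar factorization of \cite{AM1}: there exists a Borel function $\varphi$ with $|\varphi| \leq 1$ and $u = \sqrt{\rho}\,\varphi$, so that setting $\Lambda := \IM(\bar\varphi(\nabla - iA)u)$ yields $J = \sqrt{\rho}\,\Lambda$ together with the pointwise identity $|(\nabla - iA)u|^2 = |\nabla\sqrt{\rho}|^2 + |\Lambda|^2$ $dx$-a.e. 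From $u \in C_tH^2_x$ and $A \in C_tH^{3/2}_x$ one then reads off $\sqrt{\rho} \in L^\infty_tH^1_x$, $\Lambda \in L^\infty_tL^2_x$, $\phi \in L^\infty_tH^1_x$, and $J \in L^2_{t}L^2_{x, loc}$, confirming items (1)--(3) of Definition \ref{def:FEWS}.

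I would next verify the equations one by one. Maxwell is essentially automatic: $\diver B = 0$ and $\nabla \wedge E = -\d_tB$ follow from the definitions; $\diver E = \rho$ is the Poisson equation for $\phi$; and $\nabla \wedge B - \d_tE = -\Delta A + \d_t^2 A + \nabla\d_t\phi = \Box A + \nabla\d_t\phi = \mathbb P J + \nabla\d_t\phi$, which equals $J$ once $\d_t\phi = \Delta^{-1}\diver J$ is established---an immediate consequence of $\phi = (-\Delta)^{-1}\rho$ and the continuity equation. The continuity equation $\d_t\rho + \diver J = 0$ is obtained by multiplying the Schr\"odinger line of \eqref{eq:msc} by $\bar u$ and taking imaginary parts, then testing against $\eta \in C^\infty_c$. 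For the momentum equation I follow the standard Madelung manipulation: differentiating $J_k = \IM(\bar u(\d_k - iA_k)u)$ in time, substituting $\d_tu$ from \eqref{eq:msc}, and using identity \eqref{eq:bohm}, yields
\begin{equation*}
\d_tJ + \diver(\Lambda \otimes \Lambda) + \nabla P(\rho) = \rho E + J \wedge B + \tfrac{1}{2}\rho \nabla\!\left(\frac{\Delta\sqrt{\rho}}{\sqrt{\rho}}\right),
\end{equation*}
in the sense of distributions, where $P(\rho) = \frac{\gamma - 1}{\gamma}\rho^\gamma$ comes from the power nonlinearity, and the Lorentz term $J \wedge B$ emerges from the vector identity $A\cdot\nabla \sim \nabla(A\cdot) - A \wedge (\nabla\wedge\cdot)$ when commuting the gradient past the magnetic vector potential. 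Pairing against $\zeta \in C^\infty_c$ produces item (5) of Definition \ref{def:FEWS}, and energy conservation (item (7)) is inherited from the conservation of \eqref{eq:en} together with the pointwise identity above that matches the kinetic densities.

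The main delicate point is the Lorentz force. At the level of Definition \ref{def:FEWS} one only has $J \in L^\infty_tL^{3/2}_x$ and $B \in L^\infty_tL^2_x$, so $J \wedge B$ is not a priori integrable: this is precisely the obstruction discussed in Section \ref{sect:intro}. The crucial advantage of working \emph{inside} the lifespan of an $H^2$ Maxwell-Schr\"odinger solution is that the momentum equation is not obtained via a weak-limit passage but as an exact algebraic identity in \eqref{eq:msc}; because $u \in H^2$ and $A \in H^{3/2}$, every individual product ($A \cdot \nabla u$, $|A|^2u$, $u\bar u \nabla A$, etc.) makes classical distributional sense on each time slice, so the identification of the term $J \wedge B$ bypasses compensated compactness entirely. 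Finally, the identification of initial data $\rho|_{t=0} = \rho_0$, $J|_{t=0} = J_0$, $B|_{t=0} = B_0$, $E|_{t=0} = E_0$ follows directly from the continuity statements of Theorem \ref{th:1} together with the definitions of the lifted initial data in the statement of the proposition.
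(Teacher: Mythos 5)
Your proposal follows essentially the same route as the paper: apply Theorem \ref{th:1} to get the local $H^2\times H^{3/2}$ solution, define the hydrodynamic variables via the polar factorization of Lemma \ref{lemma:polar} (your $\Lambda=\IM(\bar\varphi(\nabla-iA)u)$ coincides with the paper's $\RE(\bar\varphi(-i\nabla-A)u)$), derive continuity and momentum equations by the Madelung computation, identify the quadratic tensor via \eqref{eq:bil}, and read off the Maxwell equations and energy conservation. The one place where you are too quick is the justification of the formal computation itself. You assert that, because $u\in H^2$ and $A\in H^{3/2}$, ``every individual product makes classical distributional sense on each time slice,'' but the derivation of the momentum equation manipulates expressions such as $\bar u(-i\nabla-A)\bigl((-i\nabla-A)^2u\bigr)$, which involve three derivatives of $u$ and hence only live in $H^{-1}$; the product rules and integrations by parts needed to recast them in the divergence form of Definition \ref{def:FEWS}(5) are not automatic at this regularity, and the polar factor $\varphi$ is merely an $L^\infty$ function, so even the identities $\nabla\sqrt{\rho}=\RE(\bar\varphi\nabla u)$ and \eqref{eq:bil} are obtained by approximation rather than pointwise calculus. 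The paper closes this gap explicitly: it performs the computation for regularized initial data (where everything is classical) and then passes to the limit using the continuous dependence of Proposition \ref{prop:cont_dep} together with the $H^1$-stability of the polar factorization proved in Lemma \ref{lemma:polar}. Your argument would be complete if you replaced the ``classical distributional sense'' claim with this regularization-and-stability step; as written, that step is the missing ingredient.
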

To prove this Proposition we are going to use a polar factorization argument, in analogy with the electrostatic case treated in \cite{AM1, AM2}.
\newline
Given any complex valued fuction $u\in H^1(\R^3)$, we may define the set of its polar factors as 
\begin{equation*}
P(u):=\{\varphi\in L^\infty(\R^3)\;:\;\|\varphi\|_{L^\infty}\leq1,u=\sqrt{\rho}\varphi\;\;{\rm a.e. \; in}\;\R^3\},
\end{equation*}
where $\sqrt{\rho}:=|u|$. Thus, for any $\varphi\in P(u)$, we have $|\varphi|=1$ $\sqrt{\rho}\,dx$ a.e. in $\R^3$ and $\varphi$ is uniquely defined $\sqrt{\rho}\,dx$ a.e. in $\R^3$. Clearly the polar factor is not uniquely defined in the nodal regions, i.e. in the set $\{\rho=0\}$.
\newline
In the following Lemma we exploit the polar factorization of a given wave function $\psi$ in order to define the hydrodynamical quantities associated to $\psi$. This approach overcomes the WKB ansatz in the finite energy framework and allows to define the hydrodynamical quantities almost everywhere in the space, without passing through the construction of the velocity field, which is not uniquely defined in the nodal region. Furthermore, we show how this definition which uses the polar factorization is stable in $H^1(\R^3)$.
\begin{lemma}\label{lemma:polar}
Let $u\in H^1(\R^3)$, $A\in L^3(\R^3)$, and let $\sqrt{\rho}:=|u|$, $\varphi\in P(u)$. Let us define $\Lambda:=\RE(\bar\varphi(-i\nabla-A)u)\in L^2(\R^3)$, then we have
\begin{itemize}
\item $\sqrt{\rho}\in H^1(\R^3)$ and $\nabla\sqrt{\rho}=\RE(\bar\varphi\nabla u)$;
\item the following identity holds a.e. in $\R^3$,
\begin{equation}\label{eq:bil}
\RE\{\overline{(-i\nabla-A)u}\otimes(-i\nabla-A)u\}=\nabla\sqrt{\rho}\otimes\nabla\sqrt{\rho}+\Lambda\otimes\Lambda.
\end{equation}
\end{itemize}
Moreover, let $\{u_n\}\subset H^1(\R^3)$, $\{A_n\}\subset L^3(\R^3)$ be such that $u_n$ converges strongly to $u$ in $H^1$ and $A_n$ converges strongly to $A$ in $L^3$, then we have
\begin{equation*}
\nabla\sqrt{\rho_n}\to\nabla\sqrt{\rho},\quad\Lambda_n\to\Lambda,\quad{\rm in}\;L^2(\R^3),
\end{equation*}
where $\sqrt{\rho_n}:=|u_n|$, $\Lambda_n:=\RE(\bar\varphi_n(-i\nabla-A_n)u_n)$.
\end{lemma}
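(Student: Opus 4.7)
The plan is to establish the pointwise identities first and then the stability, with the vacuum set $\{\rho = 0\}$ being the key technical point.

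For the pointwise identities I would start from the standard fact that $\sqrt{\rho} = |u| \in H^1$ with $\nabla\sqrt{\rho} = \RE(\bar\varphi \nabla u)$ a.e.\ (both sides are zero on $\{u=0\}$ via the vanishing of $\nabla u$ a.e.\ on the zero set of a Sobolev function). Writing $v := (-i\nabla - A)u$ and using $\bar\varphi u = \sqrt{\rho}$, a direct computation gives
\[
\bar\varphi v = \IM(\bar\varphi \nabla u) - A\sqrt{\rho} - i\nabla\sqrt{\rho} = \Lambda - i \nabla\sqrt{\rho} \quad \textrm{a.e.},
\]
which in particular recovers the stated formula for $\Lambda$. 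On $\{u \neq 0\}$, $|\varphi|=1$ so $\overline{(\bar\varphi v)_i}(\bar\varphi v)_j = \bar v_i v_j$; expanding $\bar\varphi v = \Lambda - i\nabla\sqrt{\rho}$ and taking the real part, the imaginary cross terms cancel and one obtains \eqref{eq:bil}. On $\{u = 0\}$, all of $v$, $\Lambda$ and $\nabla\sqrt{\rho}$ vanish a.e., so \eqref{eq:bil} holds everywhere.

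For the stability I would first check that $(-i\nabla - A_n) u_n \to (-i\nabla - A)u$ strongly in $L^2$: this follows from the hypothesis $\nabla u_n \to \nabla u$ in $L^2$, $u_n \to u$ in $L^6$ (Sobolev) and $A_n \to A$ in $L^3$, by H\"older. Applying \eqref{eq:bil} to both $(u_n,A_n)$ and $(u,A)$ and taking the trace produces the key norm identity
\[
\|\nabla\sqrt{\rho_n}\|_{L^2}^2 + \|\Lambda_n\|_{L^2}^2 = \|(-i\nabla-A_n)u_n\|_{L^2}^2 \longrightarrow \|\nabla\sqrt{\rho}\|_{L^2}^2 + \|\Lambda\|_{L^2}^2.
\]
Weak convergence $\nabla\sqrt{\rho_n} \rightharpoonup \nabla\sqrt{\rho}$ in $L^2$ is immediate from $|\sqrt{\rho_n} - \sqrt{\rho}| \le |u_n - u|$ together with the $L^2$-boundedness of $\nabla\sqrt{\rho_n}$.

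The hard step is the weak convergence $\Lambda_n \rightharpoonup \Lambda$. I would extract a weakly convergent subsequence $\Lambda_{n_k} \rightharpoonup \tilde\Lambda$ in $L^2$. Since $J_n := \sqrt{\rho_n}\Lambda_n = \IM(\bar u_n \nabla u_n) - A_n|u_n|^2$ converges strongly in $L^{3/2}$ to $J = \sqrt{\rho}\Lambda$, while the weak-strong product yields $\sqrt{\rho_{n_k}}\Lambda_{n_k} \rightharpoonup \sqrt{\rho}\tilde\Lambda$, one concludes $\tilde\Lambda = \Lambda$ a.e.\ on $\{\rho>0\}$. To pin down $\tilde\Lambda$ on the vacuum set $\{\rho = 0\}$, I would combine the norm identity with weak lower semicontinuity: $\liminf \|\nabla\sqrt{\rho_{n_k}}\|_{L^2}^2 \ge \|\nabla\sqrt{\rho}\|_{L^2}^2$ and $\liminf \|\Lambda_{n_k}\|_{L^2}^2 \ge \|\tilde\Lambda\|_{L^2}^2 \ge \|\Lambda\|_{L^2}^2$, using that $\Lambda = 0$ a.e.\ on $\{\rho=0\}$ by the identity part. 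In view of the displayed identity both inequalities must be equalities; in particular $\|\tilde\Lambda\|_{L^2} = \|\Lambda\|_{L^2}$, which forces $\tilde\Lambda = 0$ a.e.\ on $\{\rho=0\}$, hence $\tilde\Lambda = \Lambda$ everywhere. Uniqueness of the weak limit upgrades this to $\Lambda_n \rightharpoonup \Lambda$ in $L^2$; weak plus norm convergence in a Hilbert space then gives strong $L^2$ convergence of both $\nabla\sqrt{\rho_n}$ and $\Lambda_n$. The main obstacle is exactly the identification of $\tilde\Lambda$ on $\{\rho=0\}$, where the polar factor is not uniquely defined and the current-density argument provides no information; coupling the trace identity from \eqref{eq:bil} with lower semicontinuity is what closes this gap.
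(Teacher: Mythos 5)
Your proof is correct, and its skeleton matches the paper's: the identity \eqref{eq:bil} comes in both cases from $|\varphi|=1$ off the vacuum set together with the a.e.\ vanishing of $\nabla u$ (hence of $(-i\nabla-A)u$) on $\{u=0\}$, and the strong convergence is deduced in both cases from weak convergence plus convergence of the $L^2$ norms, the latter extracted from the trace identity and the strong $L^2$ convergence of $(-i\nabla-A_n)u_n$ via the same lower-semicontinuity squeeze. Where you genuinely diverge is in identifying the weak limit of $\Lambda_n$. The paper passes to a weak-$\ast$ limit of the polar factors $\varphi_n$ in $L^\infty$, checks that this limit is again a polar factor of $u$, and takes the limit in the product $\bar\varphi_n(-i\nabla-A_n)u_n$ directly (weak-$\ast$ times strong), using that $\RE(\bar\varphi(-i\nabla-A)u)$ is independent of the choice of $\varphi\in P(u)$. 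You instead identify the limit $\tilde\Lambda$ of a weakly convergent subsequence in two pieces: on $\{\rho>0\}$ through the strong $L^{3/2}$ convergence of the gauge-invariant current $J_n=\sqrt{\rho_n}\Lambda_n$, and on the vacuum set by observing that the norm identity forces $\|\tilde\Lambda\|_{L^2}=\|\Lambda\|_{L^2}$, hence $\tilde\Lambda=0$ a.e.\ where $\rho=0$. Your route avoids having to verify the compactness property of polar factors (that weak-$\ast$ limits of elements of $P(u_n)$ belong to $P(u)$), replacing it with an argument that only uses quantities ($J_n$, the norms) that do not see the polar factor at all; the cost is the extra vacuum-set step, which you handle correctly. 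Both arguments are complete and yield the same conclusion.
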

\begin{proof}
Let $u\in H^1(\R^3)$ and let us consider a sequence of smooth functions converging to $u$, $\{u_n\}\subset C^\infty_c(\R^3)$, $u_n\to u$ in $H^1(\R^3)$. For each $u_n$ we may define
\begin{equation*}
\varphi_n(x):=\left\{\begin{aligned}
&\frac{u_n(x)}{|u_n(x)|}&{\rm if}\;u_n(x)\neq0\\
&0&{\rm if}\;u_n(x)=0.
\end{aligned}\right.
\end{equation*}
The $\varphi_n$'s are clearly polar factors for the wave functions $u_n$. Since $\|\varphi_n\|_{L^\infty}\leq1$, then (up to subsequences) there exists $\varphi\in L^\infty(\R^3)$ such that
\begin{equation}\label{eq:pol_wstar}
\varphi_n\wstar\varphi,\quad L^\infty(\R^d).
\end{equation}
It is easy to check that $\varphi$ is indeed a polar factor for $u$. Since $\{u_n\}\subset C^\infty_c(\R^3)$, we have
\begin{equation*}
\nabla\sqrt{\rho_n}=\RE(\bar\varphi_n\nabla u_n),\quad{\rm a.e.\;in}\;\R^3.
\end{equation*}
It follows from the convergence above
\begin{equation*}
\begin{aligned}
\nabla\sqrt{\rho_n}&\to\nabla\sqrt{\rho},\quad L^2(\R^3)\\
\RE(\bar\varphi_n\nabla u_n)&\rightharpoonup\RE(\bar\varphi\nabla u),\quad L^2(\R^3),
\end{aligned}
\end{equation*}
thus $\nabla\sqrt{\rho}=\RE(\bar\varphi\nabla u)$ in $L^2(\R^3)$ and consequently the equality holds a.e. in $\R^3$.
\newline
It should be noted that here we have $\nabla\sqrt{\rho}=\RE(\bar\varphi\nabla u)$, where $\varphi$ is the weak$-\ast$ limit in \eqref{eq:pol_wstar}. However the identity above for $\nabla\sqrt{\rho}$ does not depend on the choice of $\varphi$. Indeed, by Theorem 6.19 in \cite{LL} we have $\nabla u=0$ for almost every $x\in u^{-1}(\{0\})$ and, on the other hand, $\varphi$ is uniquely determined on $\{x\in\R^3:|u(x)|>0\}$ almost everywhere. Consequently, for any 
$\varphi_1, \varphi_2\in P(u)$, we have $\RE(\bar\varphi_1\nabla u)=\RE(\bar\varphi_2\nabla u)=\nabla\sqrt{\rho}$.
The same argument applies for $\Lambda:=\RE(\bar\varphi(-\nabla-A)u)$. Let us now prove the identity \eqref{eq:bil}. Recall that we have $|\varphi|=1$ $\sqrt{\rho}\,dx$ a.e. in $\R^3$, hence again by invoking Theorem 6.19 in \cite{LL} we have
\begin{equation*}
\begin{aligned}
\RE\{\overline{(-i\nabla-A)u}\otimes(-i\nabla-A)u\}=&\RE\left\{\left(\varphi\overline{(-i\nabla-A)u}\right)\otimes\left(\bar\varphi(-i\nabla-A)u\right)\right\}\\
=&\RE\{\varphi\overline{(-i\nabla-A)u}\}\otimes\RE\{\bar\varphi(-i\nabla-A)u\}\\&-\IM\{\varphi\overline{(-i\nabla-A)u}\}\otimes\IM\{\bar\varphi(-i\nabla-A)u\}\\
=&\Lambda\otimes\Lambda+\nabla\sqrt{\rho}\otimes\nabla\sqrt{\rho},
\end{aligned}
\end{equation*}
a.e. in $\R^3$. Furthermore, by taking the trace on both sides of the above equality we furthermore obtain
\begin{equation}\label{eq:1022}
|(-i\nabla-A) u|^2=|\nabla\sqrt{\rho}|^2+|\Lambda|^2.
\end{equation}
For the second part of the Lemma, let us consider a sequence $\{u_n\}\subset H^1$ strongly converging to $u\in H^1$ and vector fields $\{A_n\}\subset L^3$ strongly converging to $A\in L^3$. As before it is straightforward to show that
\begin{equation*}
\begin{aligned}
\RE(\bar\varphi_n\nabla u_n)\rightharpoonup&\RE(\bar\varphi\nabla u),\quad L^2\\
\RE(\bar\varphi_n(-i\nabla-A_n)u_n)\rightharpoonup&\RE(\bar\varphi(-i\nabla-A)u),\quad L^2.
\end{aligned}
\end{equation*}
Moreover, from \eqref{eq:1022}, the strong convergence of $u_n$ and the weak convergence for $\nabla\sqrt{\rho_n}, \Lambda_n$, we obtain
\begin{equation*}
\begin{aligned}
\|(-i\nabla-A)u\|_{L^2}^2=&\|\nabla\sqrt{\rho}\|_{L^2}^2+\|\Lambda\|_{L^2}^2
\leq\liminf_{n\to\infty}\left(\|\nabla\sqrt{\rho_n}\|_{L^2}^2+\|\Lambda_n\|_{L^2}^2\right)\\
&=\lim_{n\to\infty}\|(-i\nabla-A_n)u_n\|_{L^2}^2=\|(-i\nabla-A)u\|_{L^2}^2.
\end{aligned}
\end{equation*}
Hence, we obtain $\|\nabla\sqrt{\rho_n}\|_{L^2}\to\|\nabla\sqrt{\rho}\|_{L^2}$ and 
$\|\Lambda_n\|_{L^2}\to\|\Lambda\|_{L^2}$. Consequently, from the weak convergence in $L^2$ and the convergence of the $L^2$ norms we may infer the strong convergence
\begin{equation*}
\nabla\sqrt{\rho_n}\to\nabla\sqrt{\rho},\quad\Lambda_n\to\Lambda,\quad{\rm in}\;L^2(\R^3).
\end{equation*}
\end{proof}
In view of Lemma \ref{lemma:polar} we can now prove Proposition \ref{prop:QMHD}. Let $(u_0, A_0, A_1)\in X$ be given, then by our main Theorem \ref{th:1} there exists a unique solution $(u, A)$ to \eqref{eq:msc} in 
$[0, T_{max})\times\R^3$ such that $u\in C([0, T_{max});H^2(\R^3))$, $A\in C([0, T_{max}); H^{3/2}(\R^3))\cap C^1([0, T_{max});H^{1/2}(\R^3))$. Let us now define $\sqrt{\rho}:=|u|$, $\Lambda:=\RE(\bar\varphi(-i\nabla+A)u)$, where $\varphi$ is a polar factor for $u$, and let $\phi:=(-\Delta)^{-1}\rho$. By differentiating $\rho$ with respect to time we have
\begin{equation*}
\begin{aligned}
\d_t\rho=&2\RE\left\{\bar u\left(-\frac{i}{2}(-i\nabla-A)^2u-i\phi u-i|u|^{2(\gamma-1)}u\right)\right\}\\
=&\IM\left\{\bar u(-i\nabla-A)^2u\right\}\\
=&\IM\left\{-i\diver\left(\bar u(-i\nabla-A)u+\overline{(-i\nabla-A)u}\cdot(-i\nabla-A)u\right)\right\}\\
=&-\diver\left(\RE(\bar u(-i\nabla-A)u)\right).
\end{aligned}
\end{equation*}
Hence by defining $J=\RE\left(\bar u(-i\nabla-A)u\right)=\sqrt{\rho}\Lambda$ we obtain the continuity equation for $\rho$
\begin{equation*}
\d_t\rho+\diver J=0.
\end{equation*}
Now let us differentiate $J$ with respect to time,
\begin{equation*}
\begin{aligned}
\d_tJ=&\RE\left\{\left(\frac{i}{2}\overline{(-i\nabla-A)^2u}+i\phi\bar u+i|u|^{2(\gamma-1)}\bar u\right)(-i\nabla-A)u\right\}\\
&+\RE\left\{\bar u(-i\nabla-A)\left(-\frac{i}{2}(-i\nabla-A)^2u-i\phi u-i|u|^{2(\gamma-1)}u\right)\right\}-\rho\d_tA\\
=&\frac12\IM\left\{\bar u(-i\nabla-A)\left((-i\nabla-A)^2u\right)-\overline{(-i\nabla-A)^2u}(-i\nabla-A)u\right\}\\
&+\RE\left\{\bar u(\phi+|u|^{2(\gamma-1)})\nabla u-\bar u\nabla\left(\phi u+|u|^{2(\gamma-1)}u\right)\right\}-\rho\d_tA.
\end{aligned}
\end{equation*}
Now the last line equals $\rho\nabla\phi-\rho\nabla\rho^{\gamma-1}-\rho\d_tA=\rho(-\d_tA-\nabla\phi)+\nabla P(\rho)$, where $P(\rho)=\frac{\gamma-1}{\gamma}\rho^\gamma$.
After some tedious but rather straightforward calculations we may see that
\begin{multline*}
\frac12\IM\left\{\bar u(-i\nabla-A)\left((-i\nabla-A)^2u\right)-\overline{(-i\nabla-A)^2u}(-i\nabla-A)u\right\}\\
=\frac14\nabla\Delta\rho-\diver\left(\RE\left\{\overline{(-i\nabla-A)u}\otimes(-i\nabla-A)u\right\}\right)+J\wedge(\nabla\wedge A).
\end{multline*}
By putting everything together we then obtain
\begin{multline*}
\d_tJ+\diver\left(\RE\{\overline{(-i\nabla-A)u}\otimes(-i\nabla-A)u\}\right)+\nabla P(\rho)=\\\rho(-\d_tA-\nabla\phi)+J\wedge(\nabla\wedge A)+\frac14\nabla\Delta\rho.
\end{multline*}
We now use the polar factorization Lemma to infer that
\begin{equation*}
\RE\{\overline{(-i\nabla-A)u}\otimes(-i\nabla-A)u\}=\nabla\sqrt{\rho}\otimes\nabla\sqrt{\rho}+\Lambda\otimes\Lambda
\end{equation*}
and consequently we get
\begin{equation*}
\d_tJ+\diver(\Lambda\otimes\Lambda)+\nabla P(\rho)=\rho E+J\wedge B+\frac14\nabla\Delta\rho-\diver(\nabla\sqrt{\rho}\otimes\nabla\sqrt{\rho}).
\end{equation*}
By recalling identity \eqref{eq:bohm} we see that this is the equation for the current density in the QMHD system \eqref{eq:QMHD}.
The above calculations are rigorous only when $(u, A)$ are sufficiently regular, however for solutions to \eqref{eq:msc} considered in Theorem \ref{th:1} they can be rigorsouly justified in the weak sense, namely in the sense of Definition \ref{def:FEWS} by regularising the initial data and by exploiting the continuous dependence showed in Proposition \ref{prop:cont_dep} and the $H^1-$stability of the polar factorization stated in Lemma \ref{lemma:polar}.
\newline
It only remains to prove that $E, B$ satisfy the Maxwell equations, but this comes in a straightforward way from the wave equation in \eqref{eq:msc} and the definitions $E=-\d_tA-\nabla\phi$, $B=\nabla\wedge A$.
\newline
Finally we remark that for solutions $(u, A)$ to \eqref{eq:msc} considered in Theorem \ref{th:1} the total energy \eqref{eq:en} is conserved. Again by using Lemma \ref{lemma:polar} we see that the energy in \eqref{eq:en} equals the one defined in \eqref{eq:QMHD_en}
 this equals the energy defined in \eqref{eq:QMHD_en}. This concludes the proof of Proposition \ref{prop:QMHD}.
\section{Appendix - Continuous dependence}
In this Appendix we are going to prove the Lemmas \ref{lemma:1}, \ref{lemma:2}, \ref{lemma:3} used to show the continuous dependence stated in Proposition \ref{prop:cont_dep}. We consider two initial data 
$(u_0, A_0, A_1), (u_0', A_0', A_1')\in X$ such that
\begin{equation*}
\|(u_0, A_0, A_1)\|_X, \|(u_0', A_0', A_1')\|_X\leq R,
\end{equation*}
and whose energies, defined as in \eqref{eq:en}, satisfy
\begin{equation*}
E(t), E'(t)\leq E.
\end{equation*}
All throughout this Appendix we are going to denote by $(u, A), (u', A')$ the solutions to \eqref{eq:msc} emanated from $(u_0, A_0, A_1), (u_0', A_0', A_1')\in X$, respectively. First of all we are going to prove Lemma \ref{lemma:1}; we will split it into two steps, see the two Lemmas \ref{lemma:11} and \ref{lemma:12} below.
\begin{lemma}\label{lemma:11}
We have
\begin{equation}\label{eq:exchange}\begin{split}
\Vert\p\Vert_{L^\infty_t H^2_x(\mathbb{R}^3)}&\lesssim_{R,E}\Vert\partial_{t}(\p)\Vert_{L^\infty_tL^2_x(\mathbb{R}^3)}+\Vert\p\Vert_{L^\infty_tL^2_x(\mathbb{R}^3)}\\&+\Vert A-A'\Vert_{L^\infty_t H_x^\frac{1}{2}(\mathbb{R}^3)}.
\end{split}\end{equation}
\end{lemma}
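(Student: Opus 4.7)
The plan is to rewrite the identity for $\Delta_A(u-u')$ by subtracting the Schr\"odinger equations satisfied by $u$ and $u'$, and then invoke \eqref{eq:H2tomagLap} to pass from $\|\Delta_A(u-u')\|_{L^2}$ to the $H^2$-norm. Subtraction yields
\begin{equation*}
-\tfrac{1}{2}\Delta_A(u-u') = i\partial_t(u-u') - \phi(u-u') - (\phi-\phi')u' - \bigl(|u|^{2(\gamma-1)}u - |u'|^{2(\gamma-1)}u'\bigr) + \tfrac{1}{2}(\Delta_A - \Delta_{A'})u',
\end{equation*}
so that, once each right hand side term is bounded in $L^2$ by the quantities appearing in \eqref{eq:exchange}, the conclusion follows from \eqref{eq:H2tomagLap}, with the parasitic term $\|A\|_{H^1}^4\|u-u'\|_{L^2}$ absorbed thanks to the uniform $H^1$-bound on $A$ coming from $R$ and $E$.

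The three genuinely nonlinear contributions will all be controlled by $\|u-u'\|_{L^2}$ alone. For $\phi(u-u')$ it is enough to place $\phi$ in $L^\infty$, using \eqref{eq:soggelemma} together with the $L^\infty$-bound coming from the a priori $H^2$-control on $u$. For $(\phi-\phi')u'$ I will expand $|u|^2-|u'|^2 = (u-u')\bar u + u'\overline{(u-u')}$ and apply \eqref{eq:inverselaplacianestimate}, which directly produces the $L^2$-bound in terms of $\|u-u'\|_{L^2}$ and the $L^3$-norms of $u$ and $u'$ (hence their $H^1$-norms, controlled by $E$). For the power-type difference, the elementary pointwise estimate $||u|^{2(\gamma-1)}u - |u'|^{2(\gamma-1)}u'| \lesssim (|u|^{2(\gamma-1)}+|u'|^{2(\gamma-1)})|u-u'|$ combined with $H^2 \hookrightarrow L^\infty$ closes the estimate.

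The only term generating the $\|A-A'\|_{H^{1/2}}$ contribution, and the delicate point of the argument, is $(\Delta_A-\Delta_{A'})u'$, which in the Coulomb gauge expands as
\begin{equation*}
(\Delta_A-\Delta_{A'})u' = -2i(A-A')\cdot\nabla u' - (|A|^2-|A'|^2)u'.
\end{equation*}
The available regularity on $A-A'$ is only $H^{1/2}$, so the plan is to compensate by exploiting the $H^2$-regularity of $u'$. For the first piece I combine $H^{1/2}\hookrightarrow L^3$ with $\nabla u' \in L^6$ (from $u'\in H^2$) to get $\|(A-A')\cdot\nabla u'\|_{L^2} \lesssim \|A-A'\|_{H^{1/2}}\|u'\|_{H^2}$. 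For the quadratic piece I factor $|A|^2-|A'|^2 = (A+A')\cdot(A-A')$ and split via H\"older as $L^6\cdot L^3\cdot L^\infty$, using $H^1\hookrightarrow L^6$, $H^{1/2}\hookrightarrow L^3$ and $H^2\hookrightarrow L^\infty$. Collecting all the estimates and applying \eqref{eq:H2tomagLap} produces \eqref{eq:exchange} with constant depending only on $R$ and $E$.
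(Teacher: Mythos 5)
Your proof is correct and follows essentially the same strategy as the paper's: both solve the difference equation for the second-order term and bound the remaining terms in $L^2$ using \eqref{eq:inverselaplacianestimate}, \eqref{eq:soggelemma}, the pointwise power estimate, and $L^3$--$L^6$ H\"older splittings for the $A-A'$ contributions. The only cosmetic difference is that you isolate the magnetic Laplacian and invoke \eqref{eq:H2tomagLap}, whereas the paper isolates the plain Laplacian and handles $A\cdot\nabla(u-u')$ and $|A|^2(u-u')$ by interpolation plus absorption of an $\varepsilon\Vert u-u'\Vert_{H^2}$ term; the two devices are equivalent.
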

\begin{proof}
Let us consider the equation for the difference $u-u'$; we have
\begin{displaymath}
i\partial_{t}(u-u')=-\Delta(u-u')+2iA\cdot\nabla(u-u')+\vert A\vert^2(u-u')+F
\end{displaymath}
where
\begin{align*}
F &=2i (A-A')\cdot\nabla u'+(\vert A\vert^2-\vert A'\vert^2)u'+(\phi(|u|^2)-\phi(|u'|^2))u'\\
&+\phi(|u|^2)(u-u')+| u\vert^{2(\gamma-1)}u-| u'\vert^{2(\gamma-1)}u'\,.
\end{align*}
This implies
\begin{align*}
\Vert\Delta(u-u')\Vert_{L^2(\mathbb{R}^3)}&\leq\Vert\partial_{t}(\p)\|_{L^2(\mathbb{R}^3)}+\Vert A\cdot\nabla(\p)\Vert_{L^2(\mathbb{R}^3)}\\&+\Vert\vert A\vert^2(\p)\Vert_{L^2(\mathbb{R}^3)}+\Vert F\Vert_{L^2(\mathbb{R}^3)}
\end{align*}
From H\"older's inequality and Sobolev embedding theorem we have
\begin{align*}
\Vert A\cdot\nabla(\p)\Vert_{L^2(\mathbb{R}^3)}&\leq\Vert A\Vert_{L^6(\mathbb{R}^3)}\Vert\nabla(\p)\Vert_{L^3(\mathbb{R}^3)}\lesssim\Vert\nabla A\Vert_{L^2(\mathbb{R}^3)}\Vert\p\Vert_{H^\frac{3}{2}}\\&
\lesssim_E\Vert\p\Vert_{H^\frac{3}{2}(\mathbb{R}^3)}
\lesssim_E\Vert\p\Vert^\frac{1}{4}_{L^2(\mathbb{R}^3)}\Vert\p\Vert_{H^2(\mathbb{R}^3)}^\frac{3}{4}\\&\lesssim_E C(\eps)\Vert\p\Vert_{L^2(\mathbb{R}^3)}+\eps\Vert\p\Vert_{H^2(\mathbb{R}^3)}\,,
\end{align*}
where we do not consider the explicit dependence of the constants on $R$ and $E$. Similarly we have
\begin{displaymath}
\Vert\vert A\vert^2(\p)\Vert_{L^2(\mathbb{R}^3)}\lesssim\Vert A\Vert_{L^6}^2\Vert\p\Vert_{H^1(\mathbb{R}^3)}\lesssim_E C(\eps)\Vert\p\Vert_{L^2(\mathbb{R}^3)}+\eps\Vert\p\Vert_{H^2(\mathbb{R}^3)}\end{displaymath}
We can deal with $F$ as already done previously, getting
\begin{displaymath}
\Vert F\Vert_{L^2(\mathbb{R}^3)}\lesssim_{R,E}\Vert A-A'\Vert_{H^\frac{1}{2}(\mathbb{R}^3)}+\Vert\p\Vert_{L^2(\mathbb{R}^3)}\end{displaymath}
Finally, putting all togheter the previous inequality, we have
\begin{align*}
\Vert u-u'\Vert_{L^\infty_t H^2_x(\mathbb{R}^3)}&\leq\Vert u-u'\Vert_{L^\infty_t L^2_x(\mathbb{R}^3)}+\Vert\Delta(u-u')\Vert_{L^\infty_t L^2_x(\mathbb{R}^3)}\\
&\lesssim_{R,E} C(\eps)\Vert u-u'\Vert_{L^\infty_t L^2_x(\mathbb{R}^3)}+\Vert\partial_{t}(u-u')\Vert_{L^\infty_t L^2_x(\mathbb{R}^3)}
\\&+\Vert A-A'\Vert_{L^\infty_t H^{\frac{1}{2}}_x(\mathbb{R}^3)}+\eps\Vert u-u'\Vert_{L^\infty_t H^2_x(\mathbb{R}^3)}\,.
\end{align*}
Now, by choosing $\eps$ sufficently small, we get  (\ref{eq:exchange}).
\end{proof}
We note that in the same way we can prove that
\begin{equation}\label{eq:reverse}
\Vert\partial_{t}(\p)\Vert_{L^\infty_tL^2_x(\mathbb{R}^3)}\lesssim_{R,E} \Vert\p\Vert_{L^\infty_tH^2_x(\mathbb{R}^3)}+\Vert A-A'\Vert_{L^\infty_t H_x^\frac{1}{2}(\mathbb{R}^3)}
\end{equation}

In order to estimate the term $\Vert\partial_{t}(\p)\Vert_{L^\infty_tL^2_x(\mathbb{R}^3)}$ we use next lemma.
\begin{lemma}\label{lemma:12}
The following inequality holds:
\begin{equation}\label{eq:stimaderivatatemporale}\begin{split}
\Vert\partial_{t} & u-\partial_{t} u'\Vert_{L^\infty_tL^2_x(\mathbb{R}^3)}\lesssim_{R,E}\Vert\partial_{t}(u-u')(0)\Vert_{L^2(\mathbb{R}^3)}\\
&+T\Vert\partial_{t} u'\Vert_{L^\infty_tH^2_x(\mathbb{R}^3)}\big(\Vert A-A'\Vert_{L^\infty_t H_x^\frac{1}{2}(\mathbb{R}^3)}+\Vert u-u'\Vert_{L^\infty_t L^2_x(\mathbb{R}^3)}\big)\\
&+T\Vert (u-u',A-A',\partial_{t} A-\partial_{t} A')\Vert_{X}
\end{split}
\end{equation}
\end{lemma}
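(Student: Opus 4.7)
The plan is to derive an evolution equation for $W:=\partial_t(u-u')$ and close a standard $L^2$ energy estimate. I would differentiate the Schr\"odinger equation in \eqref{eq:msc} in time for both $u$ and $u'$ and subtract. With $w:=u-u'$ and $a:=A-A'$, the resulting equation takes the form
\begin{equation*}
i\partial_t W = -\tfrac12\Delta_A W + R,
\end{equation*}
where the remainder $R$ splits naturally into three structural pieces: (a) the magnetic mismatch $\tfrac12(\Delta_{A'}-\Delta_A)\partial_tu'$; (b) the commutator difference $-\tfrac12[\partial_t,\Delta_A]u+\tfrac12[\partial_t,\Delta_{A'}]u'$, which after the rewriting $-\tfrac12[\partial_t,\Delta_A]w+\tfrac12([\partial_t,\Delta_{A'}]-[\partial_t,\Delta_A])u'$ consists in Coulomb gauge of terms of type $\partial_tA\cdot\nabla(\cdot)$ and $A\cdot\partial_tA\,(\cdot)$; (c) the time derivatives of the Hartree and power-nonlinearity differences $\partial_t(\phi u-\phi'u')$ and $\partial_t(|u|^{2(\gamma-1)}u-|u'|^{2(\gamma-1)}u')$. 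Since $-\tfrac12\Delta_A$ is self-adjoint on $L^2$ (because $\diver A=0$), its contribution to the $L^2$ energy identity vanishes and one obtains
\begin{equation*}
\tfrac12\tfrac{d}{dt}\|W\|_{L^2}^2 = \IM\!\int_{\R^3} R\,\overline W\,dx \le \|R\|_{L^2}\|W\|_{L^2}.
\end{equation*}

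The work then reduces to bounding $\|R\|_{L^2}$ pointwise in time. For (a), expanding $(\Delta_{A'}-\Delta_A)\partial_tu' = 2ia\cdot\nabla\partial_tu'-(A+A')\cdot a\,\partial_tu'$ and invoking $H^{1/2}\hookrightarrow L^3$, $H^2\hookrightarrow L^\infty$ and $H^{3/2}\hookrightarrow L^6$ (with the a priori $X$-bound $R$ on $A, A'$) gives $\lesssim_R\|a\|_{H^{1/2}}\|\partial_tu'\|_{H^2}$, matching the factor structure of the middle term in the claim. For (b), the commutator $[\partial_t,\Delta_A]w=-2i\partial_tA\cdot\nabla w-2A\cdot\partial_tA\,w$ is bounded by $C(R)\|w\|_{H^2}$ using $\partial_tA\in H^{1/2}\hookrightarrow L^3$ and $\nabla w\in H^1\hookrightarrow L^6$; the analogous difference-of-commutators piece acting on $u'$ gives $C(R,E)(\|\partial_ta\|_{H^{1/2}}+\|a\|_{H^{1/2}})$, so altogether (b) contributes $\lesssim_{R,E}\|(w,a,\partial_ta)\|_X$. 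For (c), writing $\partial_t(\phi u)=\partial_t\phi\cdot u+\phi\,\partial_tu$ with $\partial_t\phi=(-\Delta)^{-1}(2\RE(\bar u\,\partial_tu))$ and taking the difference, Hardy-Littlewood-Sobolev together with \eqref{eq:soggelemma} and \eqref{eq:inverselaplacianestimate} split each contribution into a piece linear in $W$ (coefficient $\lesssim_{R,E}1$, to be absorbed via Gronwall) and a Lipschitz-in-$w$ piece controlled by $C(R,E)\|\partial_tu'\|_{H^2}\|w\|_{L^2}$; the power nonlinearity, being linear in $\partial_tu$, splits analogously using the pointwise bound $|\,|z|^{2(\gamma-1)}z-|z'|^{2(\gamma-1)}z'|\lesssim(|z|+|z'|)^{2(\gamma-1)}|z-z'|$ and $H^2\hookrightarrow L^\infty$.

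Collecting all pieces and integrating in time yields an estimate of the schematic form
\begin{equation*}
\|W\|_{L^\infty_tL^2_x}\le \|W(0)\|_{L^2}+C(R,E)\,T\,\|W\|_{L^\infty_tL^2_x}+\text{RHS of \eqref{eq:stimaderivatatemporale}},
\end{equation*}
where the linear-in-$W$ piece on the right (arising from the absorbed nonlinear contributions) carries an explicit factor $T$. For $T$ small enough this term can be absorbed on the left; otherwise one closes the argument by Gronwall, producing an exponential constant depending only on $R,E,T$. The principal technical obstacle is the careful bookkeeping of the Hartree and power-nonlinearity differences: one must extract precisely the factor $\|\partial_tu'\|_{H^2}$ and pair it only against $\|w\|_{L^2}$ (and, from (a), $\|a\|_{H^{1/2}}$), without ever demanding a higher norm of $w$ or $a$, so as to match the two-factor structure of the middle term in the claim rather than a cruder bound involving $\|w\|_{H^2}\|\partial_tu'\|_{H^2}$.
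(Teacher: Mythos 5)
Your proposal is correct and follows essentially the same route as the paper: differentiate the Schr\"odinger equations in time, subtract to get an equation for $\partial_t(u-u')$ driven by the same decomposition of the remainder (magnetic mismatch against $\partial_tu'$, commutators with $\partial_tA$, and the Hartree and power-nonlinearity differences), then bound the source in $L^2$ term by term exactly as the paper does. The only cosmetic differences are that you obtain the basic inequality from the $L^2$ energy identity rather than from Duhamel's formula with the unitary propagator $U_A(t,s)$, and you close the linear-in-$W$ term by absorption/Gronwall where the paper instead invokes \eqref{eq:reverse} to fold it into the $T\Vert(u-u',A-A',\partial_tA-\partial_tA')\Vert_X$ term; both variants are equivalent for the intended use.
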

\begin{proof}
We start by differentiating in time the equation
\begin{displaymath}
i\partial_{t} u=-\Delta_Au+\phi(u)u+| u\vert^{2(\gamma-1)}u.
\end{displaymath}
We then get
\begin{equation*}
i\partial_{t}^2u=-\Delta_A\partial_{t} u+\phi(u)\partial_{t} u+(2i\partial_{t} A(\nabla-iA)+\partial_{t}\phi)u+\partial_{t}(|u\vert^{2(\gamma-1)}u)
\end{equation*}
Writing the corresponding equation for $\partial_{t}^2u'$ and taking the difference with the previous one we get
\begin{equation}\label{eq:secondderivative}
i\partial^2_t (u-u')=-\Delta_A(\partial_{t} u-\partial_{t} u')+F\,,
\end{equation}
where $F$ is given by
\begin{equation}\label{eq:inhom}
\begin{split}
F &=\bigg[2i(A-A')\bigg(\nabla-\frac{i}{2}(A+A')\bigg)+(\phi-\phi')\bigg]\partial_{t} u'+\phi(\partial_{t} u-\partial_{t} u')\\&+(2i\partial_{t} A(\nabla-iA)+\partial_{t}\phi)(u-u')+\partial_{t}(|u\vert^{2(\gamma-1)}u-| u'\vert^{2(\gamma-1)}u')\\&+
(2i\partial_{t}(A-A')(\nabla-iA)-2i(A-A')\partial_{t} A'+\partial_{t}(\phi-\phi'))u'\,.
\end{split} \end{equation}
Using the unitarity in $L^2(\mathbb{R}^3)$ of $U_A(t,s)$ we get
\begin{equation}\label{eq:Duhamel}
\Vert\partial_{t}(u-u')(t)\Vert_{L^2(\mathbb{R}^3)}\leq\Vert\partial_{t}(u-u')(0)\Vert_{L^2}+\int_0^t\Vert F(s)\Vert_{L^2(\mathbb{R}^3)}ds\,.
\end{equation}
We estimate the inhomogenous term $F$, we have
\begin{align*}
\bigg\Vert \Big[2i(A-A')&(\nabla-\frac{i}{2}(A+A'))+(\phi-\phi')\Big]\partial_{t} u' \bigg\Vert_{L^2(\mathbb{R}^3)}\\&\lesssim_{R,E} \bigg(\Vert u-u'\Vert_{L^2(\mathbb{R}^3)}+\Vert A-A'\Vert_{H^{\frac{1}{2}}(\mathbb{R}^3)}\bigg)\Vert\partial_{t} u'\Vert_{H^2(\mathbb{R}^3)}
\end{align*}
This inequality follows from
\begin{equation*}\begin{split}
\bigg\Vert(&A-A')\bigg(\nabla-\frac{i}{2}(A+A')\bigg)\partial_{t} u'\bigg\Vert_{L^2(\mathbb{R}^3)}\\
&  \leq\Vert A-A'\Vert_{L^3(\mathbb{R}^3)}\bigg\Vert\bigg(\nabla-\frac{i}{2}(A+A')\bigg)\partial_{t} u'\bigg\Vert_{L^6(\mathbb{R}^3)}\\
&\lesssim \Vert A-A'\Vert_{H^\frac{1}{2}(\mathbb{R}^3)}\big\{\Vert\nabla\partial_{t} u'\Vert_{H^1(\mathbb{R}^3)}+\Vert A+A'\Vert_{L^6(\mathbb{R}^3)}\Vert\partial_{t} u\Vert_{L^\infty(\mathbb{R}^3)}\big\}\\
& \lesssim \Vert A-A'\Vert_{H^{\frac{1}{2}}(\mathbb{R}^3)}\Vert\partial_{t} u\Vert_{H^2(\mathbb{R}^3)}\big(1+\Vert\nabla A\Vert_{L^2(\mathbb{R}^3)}+\Vert\nabla A'\Vert_{L^2(\mathbb{R}^3)}\big)\\ 
&   \lesssim_E\Vert A-A'\Vert_{H^{\frac{1}{2}}(\mathbb{R}^3)}\Vert\partial_{t} u'\Vert_{H^2(\mathbb{R}^3)}
\end{split}\end{equation*}
and
\begin{align*}
&\Vert (\phi-\phi')\partial_{t} u'\Vert_{L^2(\mathbb{R}^3)}\leq\Vert\Delta^{-1}((u-u')\overline{u}+\overline{(u-u')}u')\partial_{t} u'\Vert_{L^2(\mathbb{R}^3)}\\&
\lesssim\Vert u-u'\Vert_{L^2(\mathbb{R}^3)}\Vert\overline{u}\Vert_{L^3(\mathbb{R}^3)}\Vert\partial_{t} u\Vert_{L^3(\mathbb{R}^3)}+\Vert\overline{(u-u')}\Vert_{L^2(\mathbb{R}^3)}\Vert u'\Vert_{L^3(\mathbb{R}^3)}\Vert\partial_{t} u\Vert_{L^3(\mathbb{R}^3)}\\&
\lesssim_R\Vert u-u'\Vert_{L^2(\mathbb{R}^3)}\Vert\partial_{t} u'\Vert_{H^2(\mathbb{R}^3)}
\end{align*}
where we used  H\"older inequality, the Sobolev embeddings $H^1(\mathbb{R}^3)\hookrightarrow L^6(\mathbb{R}^3)$, $H^\frac{1}{2}(\mathbb{R}^3)\hookrightarrow L^3(\mathbb{R}^3)$ and (\ref{eq:inverselaplacianestimate}).\\
Furthermore, from (\ref{eq:soggelemma}) we may infer
\begin{displaymath}
\Vert \phi(\partial_{t} u-\partial_{t} u')\Vert_{L^2(\mathbb{R}^3)}\lesssim_R\Vert\partial_{t} u-\partial_{t} u'\Vert_{L^2(\mathbb{R}^3)}\,.
\end{displaymath}

Again, 
\begin{align*}
\Vert 2i\partial_{t} A(\nabla-iA)(u-u')\Vert_{L^2(\mathbb{R}^3)}&\lesssim\Vert\partial_{t} A\Vert_{L^3(\mathbb{R}^3)}\Vert(\nabla-iA)(u-u')\Vert_{L^6}\\&\lesssim_{R,E}\Vert u-u'\Vert_{H^2(\mathbb{R}^3)}
\end{align*}
and, by using (\ref{eq:nablaminusa}) and (\ref{eq:inverselaplacianestimate}),
\begin{align*}
\Vert\partial_{t}\phi(u-u')\Vert_{L^2(\mathbb{R}^3)}&\lesssim\Vert(\Delta^{-1}(2\RE(\overline{u}\partial_{t} u)))(u-u')\Vert_{L^2(\mathbb{R}^3)}
\\ &\lesssim\Vert\partial_{t} u\Vert_{L^2(\mathbb{R}^3)}\Vert\overline{u}\Vert_{L^3(\mathbb{R}^3)}\Vert u-u'\Vert_{H^2(\mathbb{R}^3)}\\&\lesssim_R \Vert u-u'\Vert_{H^2(\mathbb{R}^3)}\,.
\end{align*}

Observe  that one has
\begin{displaymath}
\partial_{t}(| u\vert^{2(\gamma-1)}u)=\gamma|u\vert^{\g}\partial_{t} u+(\gamma-1)| u\vert^{2(\gamma-2)}u^2\partial_{t}\overline{u},
\end{displaymath}
therefore it follows
\begin{align*}
\partial_{t}(| u\vert^{2(\gamma-1)}u-| u'\vert^{2(\gamma-1)}u') &=
\gamma\partial_{t} u(|u\vert^{2(\gamma-1)}-|u'\vert^{2(\gamma-1)})+\gamma|u'\vert^{2(\gamma-1)}\partial_{t}(\p)\\
&+(\gamma-1)\partial_{t}\overline{u}(| u\vert^{2(\gamma-2)}u^2-|u'\vert^{2(\gamma-2)}u'^2)\\&+(\gamma-1)|u'\vert^{2(\gamma-2)}u^2\partial_{t}(\overline{\p})
\end{align*}
We then have
\begin{align*}
\|\partial_{t}(| u\vert^2u-|u'\vert^2u')\|_{L^2(\mathbb{R}^3)} \lesssim_R 
\|\partial_{t}(u-u')\|_{L^2(\mathbb{R}^3)} + 
\|u-u'\|_{H^2(\mathbb{R}^3)}\,,
\end{align*}
where we used the following two inequalities
\begin{align*}
\Big|\vert z\vert^{\g}-\vert z'\vert^{\g}\Big|&\lesssim\Big(\vert z\vert^{2\gamma-3}+\vert z'\vert^{2\gamma-3}\Big)\vert z-z'\vert \\
\Big|\vert z\vert^{2(\gamma-2)}z^2-\vert z'\vert^{2(\gamma-2)}z'^2\Big|&\lesssim \Big(\vert z\vert^{2\gamma-3}+\vert z'\vert^{2\gamma-3}\Big)\vert z-z'\vert\,.
\end{align*}
For the last term, with similar computations, we have 

\begin{displaymath}
\Vert\partial_{t}(A-A')(\nabla-iA)u'\Vert_{L^2(\mathbb{R}^3)}\lesssim_{R,E}\Vert\partial_{t}(A-A')\Vert_{H^\frac{1}{2}(\mathbb{R}^3)}
\end{displaymath}
\begin{displaymath}
\Vert\partial_{t} A'(A-A')u'\Vert_{L^2(\mathbb{R}^3)}\lesssim\Vert\partial_{t} A'\Vert_{L^3(\mathbb{R}^3)}\Vert A-A'\Vert_{L^6(\mathbb{R}^3)}\Vert u'\Vert_{L^\infty(\mathbb{R}^3)}
\end{displaymath}
\begin{displaymath}
\lesssim_R\Vert A-A'\Vert_{H^\frac{3}{2}(\mathbb{R}^3)}
\end{displaymath}
\begin{displaymath}
\Vert(\partial_{t}\phi-\partial_{t}\phi')u'\Vert_{L^2(\mathbb{R}^3)}\lesssim_R\Vert\partial_{t} u-\partial_{t} u'\Vert_{L^2(\mathbb{R}^3)}+\Vert u-u'\Vert_{H^2(\mathbb{R}^3)}\,.
\end{displaymath}
By putting everything together, we obtain
\begin{align*}
\Vert\partial_{t} & u -\partial_{t} u'\Vert_{L^\infty_tL^2_x(\mathbb{R}^3)}\lesssim_{R,E}\Vert\partial_{t}(u-u')(0)\Vert_{L^2(\mathbb{R}^3)}+T\Vert\partial_{t} u-\partial_{t} u'\Vert_{L^\infty_tL^2_x(\mathbb{R}^3)}\\
&+T\Vert\partial_{t} u'\Vert_{L^\infty_tH^2_x(\mathbb{R}^3)}\big(\Vert A-A'\Vert_{L^\infty_tH_x^\frac{1}{2}(\mathbb{R}^3)}+\Vert u-u'\Vert_{L^\infty_tL^2_x(\mathbb{R}^3)}\big)
\\&+T\big(\Vert u-u'\Vert_{L^\infty_tH^2_x(\mathbb{R}^3)}+\Vert A-A'\Vert_{L^\infty_tH^\frac{3}{2}_x(\mathbb{R}^3)}+\Vert\partial_{t}(A-A')\Vert_{L^\infty_tH^\frac{1}{2}_x(\mathbb{R}^3)}\big)\,,
\end{align*}
which gives (\ref{eq:stimaderivatatemporale}), by using (\ref{eq:reverse}) for the term $\Vert\partial_{t}(u-u')\Vert_{L^\infty_t L^2_x(\mathbb{R}^3)}$ in the righthand side of the previous inequality.
\end{proof}
By putting together the two previous Lemmas we then have Lemma \ref{lemma:1}. Now we are going to estimate the term $\Vert A-A'\Vert_{L^\infty_t H^\frac{1}{2}_x(\mathbb{R}^3)}+\Vert u-u'\Vert_{L^\infty_t L^2_x(\mathbb{R}^3)}$. 
\begin{lemma}
Let $(u, A)$, $(u', A)$ be as in previous lemmas, then
\begin{equation}\label{eq:factorderivative}
\begin{split}
\Vert A-A'\Vert_{L_t^\infty H^\frac{1}{2}(\mathbb{R}^3)}&+\Vert u-u'\Vert_{L^\infty_tL^2(\mathbb{R}^3)}\\&\lesssim_{R,E}\Vert(u_0-u_0',A_0-A_0',A_1-A_1')\Vert_{L^2(\mathbb{R}^3)\times H^{\frac{1}{2}}(\mathbb{R}^3)\times H^{-\frac{1}{2}}(\mathbb{R}^3)}
\end{split}
\end{equation}
\end{lemma}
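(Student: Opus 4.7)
My plan is to estimate the $L^2$ norm of $v:=u-u'$ and the $H^{1/2}\times H^{-1/2}$ norm of $(A-A',\partial_t(A-A'))$ by a coupled energy/Strichartz argument, then close the loop via a Gronwall/bootstrap on a short interval and iterate. Write $d_0:=\|(u_0-u_0',A_0-A_0',A_1-A_1')\|_{L^2\times H^{1/2}\times H^{-1/2}}$.

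On the Schr\"odinger side, I would write the equation for $v$ as $i\partial_t v=-\tfrac12\Delta_A v+F$, with
\[
F=-\tfrac12(\Delta_A-\Delta_{A'})u'+(\phi u-\phi'u')+(|u|^{2(\gamma-1)}u-|u'|^{2(\gamma-1)}u').
\]
Since $A$ is real and $\diver A=0$, the magnetic Laplacian is $L^2$-self-adjoint, so $\tfrac12\tfrac{d}{dt}\|v\|_{L^2}^2=\IM\int F\bar v\,dx$. The piece $\phi\cdot v$ drops out of the imaginary part because $\phi$ is real. The pieces $i(A-A')\cdot\nabla u'$ and $\tfrac12(|A|^2-|A'|^2)u'$ are controlled by H\"older, using $A-A'\in L^3$ (from $H^{1/2}$), $\nabla u'\in L^6$, $A+A'\in L^6$, and $u'\in L^\infty$; the Hartree difference $(\phi-\phi')u'$ is handled via HLS together with \eqref{eq:inverselaplacianestimate}; and the power difference by the pointwise bound $||z|^{2(\gamma-1)}z-|w|^{2(\gamma-1)}w|\lesssim(|z|^{2(\gamma-1)}+|w|^{2(\gamma-1)})|z-w|$. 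This yields
\[\tfrac{d}{dt}\|v\|_{L^2}^2\lesssim_{R,E}\|v\|_{L^2}^2+\|A-A'\|_{H^{1/2}}^2.\]

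On the wave side, I would apply Lemma \ref{lemma:wave} at $s=\tfrac12$ to $\Box(A-A')=\mathbb P(J-J')$, obtaining
\[
\|A-A'\|_{L^\infty_tH^{1/2}_x}+\|\partial_t(A-A')\|_{L^\infty_tH^{-1/2}_x}\lesssim(1+T)\bigl(d_0+\|\mathbb P(J-J')\|_{L^1_tH^{-1/2}_x}\bigr).
\]
Using the embedding $L^{3/2}\hookrightarrow H^{-1/2}$ in $\R^3$ and the $L^{3/2}$-boundedness of $\mathbb P$, it suffices to bound $\|J-J'\|_{L^{3/2}}$. Expanding,
\[J-J'=\IM(\bar v\nabla u)+\IM(\overline{u'}\nabla v)-|u|^2(A-A')-(|u|^2-|u'|^2)A',\]
the only dangerous term is $\overline{u'}\nabla v$, which carries a derivative of $v$. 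Using $\overline{u'}\nabla v=\nabla(\overline{u'}v)-(\nabla\overline{u'})v$ and the fact that $\mathbb P$ annihilates gradients, this reduces after projection to $-\mathbb P\IM((\nabla\overline{u'})v)$, bounded in $L^{3/2}$ by $\|\nabla u'\|_{L^6}\|v\|_{L^2}$. The remaining pieces are routine H\"older bounds, giving $\|\mathbb P(J-J')\|_{H^{-1/2}}\lesssim_{R,E}\|v\|_{L^2}+\|A-A'\|_{H^{1/2}}$.

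To close, set $\mathcal U(T)=\|v\|_{L^\infty_tL^2_x}$ and $\mathcal A(T)=\|A-A'\|_{L^\infty_tH^{1/2}_x}$. Gronwall applied to the Schr\"odinger estimate yields $\mathcal U(T)\lesssim_{R,E,T}d_0+\sqrt T\,\mathcal A(T)$, while the wave estimate gives $\mathcal A(T)\lesssim_{R,E,T}d_0+T(\mathcal U(T)+\mathcal A(T))$. Choosing $T_0=T_0(R,E)$ small enough to absorb the $T\mathcal A(T)$ term yields $\mathcal U(T_0)+\mathcal A(T_0)\lesssim_{R,E}d_0$, and iterating over successive intervals $[kT_0,(k+1)T_0]$ extends the bound to any $T<T_{\max}$. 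The main obstacle I expect is producing $\|\mathbb P(J-J')\|_{H^{-1/2}}$ at the low regularity $v\in L^2$: the naive bound on $\overline{u'}\nabla v$ in $L^{3/2}$ fails since $\nabla v$ lies in no positive-order $L^p$ space at this regularity, and the fix is precisely the integration-by-parts-under-$\mathbb P$ trick already exploited in the wave-equation contraction estimate of Proposition \ref{prop:existence}.
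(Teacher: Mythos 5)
Your proposal is correct and follows essentially the same route as the paper: the wave energy estimate at $s=\tfrac12$ combined with $L^{3/2}\hookrightarrow H^{-1/2}$, the identity $\mathbb P(\overline{u'}\nabla v)=-\mathbb P(v\nabla\overline{u'})$ to keep the derivative off $v$, an $L^2$ bound for the Schr\"odinger difference, and absorption for $T$ small. The only cosmetic differences are that you derive the $L^2$ bound from the energy identity $\tfrac{d}{dt}\|v\|_{L^2}^2=2\IM\int F\bar v\,dx$ rather than from Duhamel and the unitarity of $U_A(t,s)$, and that you make the short-time iteration explicit.
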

\begin{proof}
Writing the difference equation for $A$ and $A'$ we get
\begin{align*}
\Box (A-A')=G\,,
\end{align*}
with $$G=\mathbb P\IM\{(\overline{u-u'})(\nabla-iA)u-iu\overline{u'}(A-A')-(u-u')(\nabla+iA')\overline{u'}\}$$ 
where we used the fact that $\mathbb P(\overline{u'}\nabla(u-u'))=-\mathbb P((u-u')\nabla\overline{u'})$.
By applying the energy estimate (\ref{eq:enest}) we get
\begin{align*}
\Vert A-A'\Vert_{L_t^\infty H^\frac{1}{2}_x(\mathbb{R}^3)}&\lesssim (1+T)\Vert(A_0-A_0',A_1-A_1')\Vert_{H^\frac{1}{2}(\mathbb{R}^3)\times H^{-\frac{1}{2}}(\mathbb{R}^3)}\\&+(1+T)\Vert G\Vert_{L^1_tH^{-\frac{1}{2}}_x(\mathbb{R}^3)}
\end{align*}

Using the embedding $L^{\frac{3}{2}}(\mathbb{R}^3)\hookrightarrow H^{-\frac{1}{2}}(\mathbb{R}^3)$ we have
\begin{align*}
\Vert(u-u')&(\nabla-iA)u\Vert_{L^\frac{3}{2}(\mathbb{R}^3)}\leq \Vert u-u'\Vert_{L^2(\mathbb{R}^3)}\Vert(\nabla-iA)u\Vert_{L^6(\mathbb{R}^3)}\\
&\lesssim \Vert u-u'\Vert_{L^2(\mathbb{R}^3)}\big\{\Vert\nabla u\Vert_{H^1(\mathbb{R}^3)}+\Vert Au\Vert_{L^6(\mathbb{R}^3)}\big\}\\&\lesssim\Vert u-u'\Vert_{L^2(\mathbb{R}^3)}\Vert u\Vert_{H^2(\mathbb{R}^3)}\big(1+\Vert \nabla A\Vert_{L^2(\mathbb{R}^3)}\big)\\
&\lesssim_{R,E}\Vert u-u'\Vert_{L^2(\mathbb{R}^3)}\,.
\end{align*}
Analogously
\begin{displaymath}
\Vert (u-u')(\nabla+iA)\overline{u'}\Vert_{L^{\frac{3}{2}}(\mathbb{R}^3)}\lesssim_{R,E} \Vert u-u'\Vert_{L^2(\mathbb{R}^3)}
\end{displaymath}
and
\begin{displaymath}
\Vert u\overline{u'}(A-A')\Vert_{L^\frac{3}{2}(\mathbb{R}^3)}\lesssim_R\Vert A-A'\Vert_{H^\frac{1}{2}(\mathbb{R}^3)}
\end{displaymath}
In a similar way, using the difference of the equations for $u$ and $u'$ we get
\begin{align*}
\Vert u -u'\Vert&_{L^\infty_tL^2(\mathbb{R}^3)}\lesssim_{R,E}\Vert u_0-u_0'\Vert_{L^2(\mathbb{R}^3)}\\
&+T\big\{\Vert A-A'\Vert_{L^\infty_tH^\frac{1}{2}(\mathbb{R}^3)}+\Vert u-u'\Vert_{L^\infty_tL^2(\mathbb{R}^3)}\big\}
\end{align*}
Putting all togheter, taking $T$ sufficiently small, we get (\ref{eq:factorderivative}).
\end{proof}
Now, using (\ref{eq:factorderivative}) in (\ref{eq:stimaderivatatemporale}), we get
\begin{equation}\label{eq:schrpart}
\begin{split}
&\Vert\partial_{t} u -\partial_{t} u'\Vert_{L^\infty_tL^2(\mathbb{R}^3)}\lesssim\Vert\partial_{t}(u-u')(0)\Vert_{L^2(\mathbb{R}^3)}\\
&+T\Vert\partial_{t} u'\Vert_{L^\infty_tH^2(\mathbb{R}^3)}\Vert(u_0-u_0',A_0-A_0',A_1-A_1')\Vert_{X^{0,\frac{1}{2}}}\\
&+T\Vert(u-u',A-A',\partial_{t} A-\partial_{t} A')\Vert_{X}
\end{split}
\end{equation}
On the other hand, by analogous arguments, we have the following estimate for the Maxwell part
\begin{equation}\label{eq:maxwellpart}
\begin{split}
\Vert A-A'\Vert_{L_t^\infty H^\frac{3}{2}(\mathbb{R}^3)}&+\Vert \partial_{t} A-\partial_{t} A'\Vert_{L_t^\infty H^\frac{1}{2}(\mathbb{R}^3)}\\
&\lesssim \Vert(A_0-A_0',A_1-A_1')\Vert_{H^\frac{3}{2}(\mathbb{R}^3)\times H^\frac{1}{2}(\mathbb{R}^3)}\\&
+T\Vert (u-u',A-A',\partial_{t} A-\partial_{t} A')\Vert_{X}
\end{split}
\end{equation}
In order to get the estimate for $\Vert(\p,A-A',\partial_{t} A-\partial_{t} A')\Vert_X$ we put togheter (\ref{eq:exchange}), choosing a sufficiently small $T$, (\ref{eq:schrpart}) and (\ref{eq:maxwellpart}) to get
\begin{equation}\label{eq:finalics}\begin{split}
\Vert (u-u', A &-A',\partial_{t} A-\partial_{t} A')\Vert_{X}\lesssim\Vert(u_0-u'_0,A_0-A_0', A_1-A_1')\Vert_{X}\\
&+(\Vert\partial_{t} u'\Vert_{L^\infty_tH^2(\mathbb{R}^3)}+1)\Vert(u_0-u_0',A_0-A_0',A_1-A_1')\Vert_{X^{0,\frac{1}{2}}}
\end{split}
\end{equation}
where we applied (\ref{eq:reverse}) to the term $\Vert\partial_{t}(\p)(0)\Vert_{L^2(\mathbb{R}^3)}$.\\

Finally we are going to estimate the term $\Vert\partial_{t} u'\Vert_{L^\infty_t H^2(\mathbb{R}^3)}$.
\begin{lemma} The following estimate holds:
\begin{equation}\label{eq:temporalderivativeestimate}
\Vert\partial_{t} u\Vert_{L^\infty_t H^2(\mathbb{R}^3)}\leq\Vert\partial_{t}^2u\Vert_{L^2(\mathbb{R}^3)}+C(E,R)
\end{equation}
\end{lemma}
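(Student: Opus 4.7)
The idea is to \emph{invert} the nonlinear Schr\"odinger equation in time: differentiating \eqref{eq:msc} once in $t$ expresses the magnetic Laplacian of $\partial_t u$ in terms of $\partial_t^2 u$ plus terms that are already controlled by the energy and the a priori $H^2$-bound on $u$. The starting point is the estimate \eqref{eq:H2tomagLap} applied to $\partial_t u$, which reduces the desired $H^2$-bound to an $L^2$-bound on $\Delta_A\partial_t u$:
\begin{equation*}
\|\partial_t u\|_{H^2}\lesssim \|\Delta_A \partial_t u\|_{L^2}+\|A\|_{H^1}^4\|\partial_t u\|_{L^2}\lesssim \|\Delta_A\partial_t u\|_{L^2}+C(E,R),
\end{equation*}
where $\|\partial_t u\|_{L^2}\lesssim_{R,E} 1$ follows directly from the Schr\"odinger equation itself together with the energy bound on $\phi u$ and $|u|^{2(\gamma-1)}u$.

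\textbf{Main step.} Differentiating $i\partial_t u=-\tfrac12\Delta_A u+\phi u+|u|^{2(\gamma-1)}u$ with respect to $t$ and using $\mathrm{div}\,A=0$, one obtains (cf.\ the analogous computation leading to \eqref{eq:secondderivative})
\begin{equation*}
i\partial_t^2 u=-\tfrac12\Delta_A\partial_t u+i\,\partial_t A\cdot\nabla u+(A\cdot\partial_t A)\,u+\phi\,\partial_t u+(\partial_t\phi)\,u+\partial_t(|u|^{2(\gamma-1)}u).
\end{equation*}
Solving for $\Delta_A\partial_t u$ and taking the $L^2$-norm gives
\begin{equation*}
\|\Delta_A\partial_t u\|_{L^2}\lesssim \|\partial_t^2 u\|_{L^2}+\|\partial_t A\cdot\nabla u\|_{L^2}+\|A\,\partial_t A\, u\|_{L^2}+\|\phi\,\partial_t u\|_{L^2}+\|(\partial_t\phi)u\|_{L^2}+\|\partial_t(|u|^{2(\gamma-1)}u)\|_{L^2}.
\end{equation*}
Each of the last five terms must be absorbed into $C(E,R)$.

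\textbf{Estimating the remainder terms.} I would dispatch them using only the already-established tools of Section~\ref{sect:prel}:
by H\"older and Sobolev, $\|\partial_t A\cdot\nabla u\|_{L^2}\lesssim \|\partial_t A\|_{H^{1/2}}\|u\|_{H^2}$ and $\|A\,\partial_t A\,u\|_{L^2}\lesssim\|A\|_{H^1}\|\partial_t A\|_{L^2}\|u\|_{L^\infty}$, both $\leq C(E,R)$; the term $\|\phi\,\partial_t u\|_{L^2}$ is controlled by \eqref{eq:soggelemma} together with the a priori bound on $\|\partial_t u\|_{L^2}$; for $\|(\partial_t\phi)u\|_{L^2}$ I write $\partial_t\phi=2(-\Delta)^{-1}\mathrm{Re}(\bar u\,\partial_t u)$ and apply \eqref{eq:inverselaplacianestimate} to gain $\|u\|_{L^3}^2\|\partial_t u\|_{L^2}\leq C(E,R)$; finally
\begin{equation*}
|\partial_t(|u|^{2(\gamma-1)}u)|\lesssim |u|^{2(\gamma-1)}|\partial_t u|,
\end{equation*}
so Sobolev embedding $H^2\hookrightarrow L^\infty$ and the bound $\|u\|_{H^2}\leq R$ give $\|\partial_t(|u|^{2(\gamma-1)}u)\|_{L^2}\leq C(E,R)\|\partial_t u\|_{L^2}\leq C(E,R)$. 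Adding everything yields \eqref{eq:temporalderivativeestimate}.

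\textbf{Expected difficulty.} The computation is structurally routine once one commits to differentiating the equation and using \eqref{eq:H2tomagLap}; the only genuinely delicate points are (i) the commutator term $i\,\partial_t A\cdot\nabla u$, where one must spend the regularity $\partial_t A\in H^{1/2}\hookrightarrow L^3$ against $\nabla u\in L^6$ rather than trying to put $\partial_t A\in L^2$, and (ii) the term $(\partial_t\phi)u$, which would blow up if estimated naively but is tamed by the gain of two derivatives in \eqref{eq:inverselaplacianestimate}. Once these two points are handled correctly, all remaining bounds follow from the conserved energy and the a priori $H^2$-control on $u$, and the dependence of the final constant on $E$ and $R$ is transparent.
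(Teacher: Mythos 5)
Your proof is correct and follows the same core strategy as the paper: differentiate the Schr\"odinger equation in time, isolate a second-order elliptic operator applied to $\partial_t u$, and bound everything else by $C(E,R)$ using the conserved energy and the a priori bounds. The one genuine difference is which elliptic operator you invert. You use the magnetic Laplacian together with \eqref{eq:H2tomagLap}, so the terms $2iA\cdot\nabla\partial_t u$ and $|A|^2\partial_t u$ are absorbed into $\Delta_A\partial_t u$ from the outset; the paper instead works with the flat Laplacian, must estimate $\Vert A\cdot\nabla\partial_t u\Vert_{L^2}$ and $\Vert |A|^2\partial_t u\Vert_{L^2}$ separately, and does so via the interpolation $\Vert\partial_t u\Vert_{H^{3/2}}\lesssim\Vert\partial_t u\Vert_{L^2}^{1/4}\Vert\partial_t u\Vert_{H^2}^{3/4}$ followed by an $\eps$-absorption of $\eps\Vert\partial_t u\Vert_{H^2}$ into the left-hand side. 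Your route buys a cleaner argument with no absorption step, at the (already paid) price of the equivalence \eqref{eq:H2tomagLap}; the remainder terms you are left with coincide with the paper's list $\partial_t A\cdot\nabla u$, $A\cdot\partial_t A\,u$, $\partial_t\phi\,u$, $\phi\,\partial_t u$, $\partial_t(|u|^{2(\gamma-1)}u)$, and you estimate them as the paper does. One small slip: the bound $\Vert A\,\partial_t A\,u\Vert_{L^2}\lesssim\Vert A\Vert_{H^1}\Vert\partial_t A\Vert_{L^2}\Vert u\Vert_{L^\infty}$ is not a valid H\"older application, since $\tfrac16+\tfrac12>\tfrac12$; you should instead take $A\in L^6$, $\partial_t A\in L^3$ (via $H^{1/2}\hookrightarrow L^3$, exactly as you do for the term $\partial_t A\cdot\nabla u$) and $u\in L^\infty$, which gives $\Vert A\Vert_{H^1}\Vert\partial_t A\Vert_{H^{1/2}}\Vert u\Vert_{H^2}\leq C(E,R)$ and repairs the step without changing anything else.
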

\begin{proof}
From the equation
\begin{align*}
i\partial_{tt}^2u&=-\Delta\partial_{t} u+2iA\cdot\nabla\partial_{t} u+\vert A\vert^2\partial_{t} u+2i\partial_{t} A\cdot\nabla u+2A\cdot\partial_{t} Au
\\&+\partial_{t}\phi u+\phi\partial_{t} u+\partial_{t}(| u\vert^{2(\gamma-1)}u)
\end{align*}
we can estimate $\Vert\partial_{t} u\Vert_{H^2(\mathbb{R}^3)}$. Indeed
$$\Vert\partial_{t} u\Vert_{H^2(\mathbb{R}^3)}\leq\Vert\partial_{t} u\Vert_{L^2(\mathbb{R}^3)}+\Vert\Delta\partial_{t} u\Vert_{L^2(\mathbb{R}^3)}\leq C(R)+\Vert\Delta\partial_{t} u\Vert_{L^2(\mathbb{R}^3)}$$
So we have
\begin{align*}
\Vert\Delta\partial_{t} u\Vert_{L^2(\mathbb{R}^3)}&\leq\Vert\partial_{tt}^2u\Vert_{L^2(\mathbb{R}^3)}+\Vert A\cdot\nabla\partial_{t} u\Vert_{L^2(\mathbb{R}^3)}+\Vert \vert A\vert^2\partial_{t} u\Vert_{L^2(\mathbb{R}^3)}\\
&+\Vert\partial_{t} A\cdot\nabla u\Vert_{L^2(\mathbb{R}^3)}+\Vert A\cdot\partial_{t} Au\Vert_{L^2(\mathbb{R}^3)}\\
&+\Vert \partial_{t}\phi u+\phi\partial_{t} u\Vert_{L^2(\mathbb{R}^3)}+\Vert\partial_{t}(| u\vert^{2(\gamma-1)}u)\Vert_{L^2(\mathbb{R}^3)}
\end{align*}
We begin with the estimate of the right-hand side of the previous inequality.
\begin{align*}
&\Vert A\cdot\nabla\partial_{t} u\Vert_{L^2(\mathbb{R}^3)}\lesssim\Vert A\Vert_{L^6(\mathbb{R}^3)}\Vert\nabla\partial_{t} u\Vert_{L^3(\mathbb{R}^3)}\lesssim\Vert \nabla A\Vert_{L^2(\mathbb{R}^3)}\Vert\partial_{t} u\Vert_{H^\frac{3}{2}(\mathbb{R}^3)}\\
&\lesssim\sqrt{E}\Vert\partial_{t} u\Vert^\frac{1}{4}_{L^2(\mathbb{R}^3)}\Vert\partial_{t} u\Vert_{H^2(\mathbb{R}^3)}^\frac{3}{4}\lesssim\sqrt{E}(C(\eps)\Vert\partial_{t} u\Vert_{L^2(\mathbb{R}^3)}+\eps\Vert\partial_{t} u\Vert_{H^2(\mathbb{R}^3)})\\&
\lesssim C(E,R)+C(R)\eps\Vert\partial_{t} u\Vert_{H^2(\mathbb{R}^3)}
\end{align*}
In the same way
\begin{align*}
\Vert\vert A\vert^2\partial_{t} u\Vert_{L^2(\mathbb{R}^3)}&\lesssim\Vert A\Vert^2_{L^6(\mathbb{R}^3)}\Vert\partial_{t} u\Vert_{L^6(\mathbb{R}^3)}\lesssim\Vert\nabla A\Vert_{L^2(\mathbb{R}^3)}^2\Vert\partial_{t} u\Vert_{H^1(\mathbb{R}^3)}\\
&\lesssim(1+\eps\Vert\partial_{t} u\Vert_{H^2(\mathbb{R}^3)})
\end{align*}
The other terms are all bounded by $C(R)$; for instance
\begin{displaymath}
\Vert\partial_{t} A\nabla u\Vert_{H^2(\mathbb{R}^3)}\lesssim\Vert\partial_{t} A\Vert_{H^\frac{1}{2}(\mathbb{R}^3)}\Vert u\Vert_{H^2(\mathbb{R}^3)}\leq C(R)
\end{displaymath}
or
\begin{align*}
\Vert\partial_{t}(| u\vert^{2(\gamma-1)}u)\Vert_{L^2(\mathbb{R}^3)}&\lesssim\Vert|u\vert^{2(\gamma-1)}\partial_{t} u\Vert_{L^2(\mathbb{R}^3)}\\
&\lesssim\Vert u\Vert_{L^\infty}^{2(\gamma-1)}\Vert\partial_{t} u\Vert_{L^2(\mathbb{R}^3)}\leq C(R)
\end{align*}
We can deal with the remaining terms analogously. Finally we get
\begin{displaymath}
\Vert\partial_{t} u\Vert_{H^2(\mathbb{R}^3)}\lesssim \Vert\partial_{tt}^2u\Vert_{L^2(\mathbb{R}^3)}+C(E,R)+C(R)\eps\Vert\partial_{t} u\Vert_{H^2(\mathbb{R}^3)}
\end{displaymath}
which gives (\ref{eq:temporalderivativeestimate}) for sufficiently small $\eps$.
\end{proof}

To complete the estimates we have to deal with $\Vert\partial_{t}^2u\Vert_{L^\infty_tL^2(\mathbb{R}^3)}$.
We write the equation for the time derivative $\partial_{t}^2u$
\begin{displaymath}
i\partial_{t}^3u=-\Delta u+2iA\cdot\nabla\partial_{tt}u+\vert A\vert^2\partial_{tt}u+G
\end{displaymath}
where
\begin{align*}
G &=4i\partial_t A\cdot\nabla\partial_{t} u+4A\cdot\partial_{t} A\partial_{t} u+2i\partial_{t}^2A(\nabla u-iAu)+2(\partial_{t} A)^2u\\
&+2\partial_{t}\phi\partial_{t} u+\partial_{t}^2\phi u+\partial_{t}^2u\phi+\partial_{t}^2(| u\vert^{2(\gamma-1)}u)
\end{align*}
Using Duhamel's representation in Lemma (\ref{eq:Duhamellemma}) we have 
\begin{displaymath}
\Vert\partial_{tt}u\Vert_{L^\infty_tL^2(\mathbb{R}^3)}\lesssim\Vert\partial_{tt}u(0)\Vert_{L^2(\mathbb{R}^3)}+T\Vert G\Vert_{L^\infty_tL^2(\mathbb{R}^3)}
\end{displaymath}
Proceeding as before we finally get
\begin{align*}
\Vert\partial_{t}^2u\Vert_{L^2(\mathbb{R}^3)}&\lesssim\Vert\partial_{t}^2u(0)\Vert_{L^2(\mathbb{R}^3)}\\&+TC(R,E)\Big\{\Vert\partial_{t} u\Vert_{L^\infty_tH^2(\mathbb{R}^3)}+\Vert A\Vert_{L^\infty_tH^\frac{5}{2}(\mathbb{R}^3)}+\Vert\partial_{t} A\Vert_{L^\infty_tH^\frac{3}{2}(\mathbb{R}^3)} \Big\}
\end{align*}
We estimate the right-hand side of the previous inequality. We have
\begin{align*}
\Vert A\Vert_{L^\infty_tH^\frac{5}{2}(\mathbb{R}^3)}+\Vert\partial_{t} A\Vert_{L^\infty_tH^\frac{3}{2}(\mathbb{R}^3)}&\lesssim(1+T)\Vert(A_0,A_1)\Vert_{H^\frac{5}{2}(\mathbb{R}^3)\times H^\frac{3}{2}(\mathbb{R}^3)}\\
&+T(1+T)\Vert J\Vert_{L^\infty_tH^\frac{3}{2}(\mathbb{R}^3)}
\end{align*}
For the term with $J$, proceeding as in (\ref{eq:pjestimate}),  we have
\begin{displaymath}
\Vert\overline{u}\nabla u\Vert_{H^\frac{3}{2}(\mathbb{R}^3)}\lesssim C(R)\Vert u\Vert_{H^4(\mathbb{R}^3)}
\end{displaymath}
and
\begin{displaymath}
\Vert A\Vert u\vert^2\Vert_{H^\frac{3}{2}(\mathbb{R}^3)}\lesssim C(R,E)
\end{displaymath}
So
\begin{align*}
\Vert A\Vert_{L^\infty_tH^\frac{5}{2}(\mathbb{R}^3)}+\Vert\partial_{t} A\Vert_{L^\infty_tH^\frac{3}{2}(\mathbb{R}^3)}&\lesssim(1+T)\Vert(A_0,A_1)\Vert_{H^\frac{5}{2}(\mathbb{R}^3)\times H^\frac{3}{2}(\mathbb{R}^3)}\\&+T(1+T)\Vert u\Vert_{L^\infty H^4(\mathbb{R}^3)}
\end{align*}
Moreover   since $\Vert u\Vert_{H^4(\mathbb{R}^3)}\lesssim\Vert u\Vert_{L^2(\mathbb{R}^3)}+\Vert\Delta u\Vert_{H^2(\mathbb{R}^3)}$ and by the equation for $u$ it follows
\begin{displaymath}
\Vert\Delta u\Vert_{H^2(\mathbb{R}^3)}\lesssim\Vert\partial_{t} u\Vert_{H^2(\mathbb{R}^3)}+\Vert A\cdot\nabla u+\vert A\vert^2u\Vert_{H^2}+\Vert\phi u\Vert_{H^2}+\Vert| u\vert^{2(\gamma-1)}u\Vert_{H^2(\mathbb{R}^3),}
\end{displaymath}
then by estimating the right-hand side as before, we obtain
\begin{displaymath}
\Vert u\Vert_{L^\infty_tH^4(\mathbb{R}^3)}\lesssim C(E,R) \Big(\Vert\partial_{t} u\Vert_{L^\infty_t H^2(\mathbb{R}^3)}+\Vert A\Vert_{L^\infty_t H^\frac{5}{2}(\mathbb{R}^3)}\Big)
\end{displaymath} 
Putting all together
\begin{align*}
\Vert\partial_{t} u\Vert_{L^\infty_t H^2(\mathbb{R}^3)}\lesssim\Vert\partial_{tt}u(0)\Vert_{L^2(\mathbb{R}^3)}+\Vert(A_0,A_1)\Vert_{H^\frac{5}{2}(\mathbb{R}^3)\times H^\frac{3}{2}(\mathbb{R}^3),}
\end{align*}
moreover one has
\begin{displaymath}
\Vert\partial_ {tt}u(0)\Vert_{L^2(\mathbb{R}^3)}\lesssim\Vert u_0\Vert_{H^4(\mathbb{R}^3)}+C(E,R)\Vert A_0\Vert_{H^\frac{5}{2}(\mathbb{R}^3),}
\end{displaymath}
then we get
\begin{equation}\label{eq:ok}
\Vert\partial_{t} u\Vert_{L^\infty_t H^2(\mathbb{R}^3)}\lesssim\Vert u_0\Vert_{H^4(\mathbb{R}^3)}+\Vert (A_0,A_1)\Vert_{H^\frac{5}{2}(\mathbb{R}^3)\times H^\frac{3}{2}(\mathbb{R}^3)}
\end{equation}

          \end{document}